\providecommand\NOINDEX[1]{}
\providecommand\ASLSTYLE[1]{}
\newcommand{\K}{\Kbb}
\newcommand{\R}{\Rbb}
\newcommand{\omegaunCK}{\omega_1^{\textrm{CK}}}
\title{Surreal fields stable under exponential and  logarithmic functions}
\author{\stackunder{Olivier Bournez}{\small olivier.bournez@lix.polytechnique.fr}}
\author{\stackunder{Quentin Guilmant}{\small quentin.guilmant@lix.polytechnique.fr}}
\affil{École Polytechnique, LIX, 91128 Palaiseau Cedex, France\newline
\small{This work was partially supported by ANR Project $\partial$IFFERENCE.}}
\date{}
\newtheorem{Blabla}{Blabla}[section]
\newtheorem{Theorem}[Blabla]{Theorem}
\newtheorem{Proposition}[Theorem]{Proposition}
\newtheorem{Lemma}[Theorem]{Lemma}
\newtheorem{Corollary}[Theorem]{Corollary}
\theoremstyle{definition}
\newtheorem{Definition}[Theorem]{Definition}
\newtheorem{Remark}[Theorem]{Remark}
\begin{document}
	\maketitle
\begin{abstract}
	Surreal numbers, that were initiated by Conway and popularized by Knuth, have a very rich and elegant theory, rooted in strong fundamental nice set theoretic foundations. This class of numbers, denoted by $\Nobf$, includes simultaneously the ordinal numbers and the real numbers, and forms a universal huge real closed field: It is universal in the sense that any real closed field can be embedded in it. Following Gonshor, surreal numbers can also be seen as signs sequences of ordinal length, with some exponential and logarithmic functions that extend the usual functions over the reals.  $\Nobf$   can actually also be seen as an elegant particular (generalized) power series field with real coefficients, namely Hahn series with exponents in $\Nobf$ itself. 
 	It can also be considered as a particular field of transseries, providing tools to do some analysis and asymptotic analysis for  functions over the continuum, providing natural concepts for discussing  hyperexponential or sublogarithm functions, and their asymptotics. 	
 
	In this article, we consider stability of subfields of $\Nobf$ under exponential and logarithmic functions. Namely, we consider $\Nolambda$ the set surreal numbers whose signs 
  	sequences have length less than $\lambda$ where $\lambda$ is some ordinal. Extending the discussion from van den Dries and Ehrlich,  we show that $\Nolambda$ is stable by exponential and logarithm \tiff $\lambda$ is some $\epsilon$-number. These authors have also proved that  $\Nolambda$  can be seen as a Hahn series with exponents in  $\Nolambda$  and length less than $\lambda$ \tiff $\lambda$ is some regular cardinal. 
  	Motivated in a longer term by computability issues using ordinal machines, we consider subfields stables by exponential and logarithmic functions defined by Hahn series that does not require to go up to cardinal lengths and exponents, i.e. where $\lambda$ can be some ordinal and not necessarily a cardinal. 
	
	We prove that $\Nobf$ can be expressed as a strict hierarchy of subfields stable by exponential and logarithmic functions. The definition of each level of this hierarchy is given in terms of Hahn series of length $\mu$, where $\mu$ lives in the multiplicative ordinals smaller than $\lambda$.  This provides many explicit examples of subfields of $\Nobf$ stable by exponential and logarithmic functions, and does not require to go up to a cardinal $\lambda$ to provide such examples.  
  \end{abstract}

\section{Introduction}
The class of surreal numbers has been introduced by Conway in \cite{conway2000numbers}, and then popularized by Knuth \cite{knuth1974surreal}, and then formalized later on by Gonshor \cite{gonshor1986introduction}, and by many other authors.  The general initial idea from Conway to define the class of surreal numbers, based on a concept of ``simplicity'',  is rooted on a unification of Dedekind's construction of real numbers in terms of cuts of the rational numbers, and of von Neumann's construction of ordinal numbers by transfinite induction in terms of set membership.

Following the alternative presentation from Gonshor in \cite{gonshor1986introduction}, a surreal number corresponds to an ordinal-length sequence over $\{+,-\}$, that we call a  \textbf{signs sequence}. Basically, the idea is that such sequences are ordered lexicographically, and have a tree-like structure. Namely, a $+$ (respectively $-$) added to a sequence $x$ denotes the simplest number greater (resp. smaller) than $x$ but smaller (resp. greater) than all the prefixes of $x$ which are greater (resp. smaller) than $x$.   With this definition of surreal numbers, it is possible to define operations such as addition, substraction, multiplication, division, and this yields a structure of real closed field.  

This class of numbers, denoted by $\Nobf$, is fascinating as it includes simultaneously the ordinal numbers  and the real numbers: Ordinal $\lambda$ is interpreted by the sequence of $+$ of length $\lambda$, and signs sequences of finite length correspond to the dyadic rational numbers, while sequences of length $\omega$ include real numbers as well as new numbers such $1/\omega$,  the inverse of ordinal $\omega$. 
Following Gonshor \cite{gonshor1986introduction}, based on ideas from Kruskal, it is also possible to define consistently classical functions such as the exponential function and the  logarithmic function over $\Nobf$, and to do analysis of this fields of numbers. 

A very elegant and nice property, rooted in very fundamental and accessible set theoretic foundations, is that this huge real closed field has a very strong \textbf{universality} property: It can be considered as ``the'' field that includes ``all numbers great and small'' \cite{ehrlich2012absolute}.  In particular, any divisible ordered Abelian group is isomorphic to an initial subgroup of $\Nobf$, and any real closed field is isomorphic to an initial subfield of $\Nobf$ \cite[Theorems 9 and 19]{ehrlich2001number}, \cite[Theorems 28 and 29]{conway2000numbers}, $\Nobf$ being itself a real closed field. 

Furthermore, the class of surreal numbers have been shown to unify many \textit{a priori} very different concepts and definitions, and with strong model theoretic properties, and in particular the results on $\Nobf$ described so far agree with important results on the model theory of the ordered field of real numbers with restricted analytic functions and the exponential function. See in particular \cite{DriesEhrlich01}, from which we will mention many results, but about which we decided here not to discuss model theoretic aspects. $\Nobf$ can also be equipped with a derivation, so that it can be considered as a fields of transseries \cite{berarducci2018surreal}. See example \cite{mantova2017surreal} for a survey of fascinating recent results in all these directions.  

From all these facts, we believe this is fundamental to better understand $\Nobf$ and its subfields,  as the exponential and logarithmic functions are among the simplest functions one would expect to start to do analysis. This article originated from some studies of subfields of $\Nobf$ on which it could be easy to do some computations, both in the sense of computer algebra, but also suitable to discuss computable models over the continuum in the spirit of results such as \cite{JournalACM2017} (considering computations over superreal fields) or in the spirit of \cite{galeotti2016candidate}, considering computations over the ordinals, generalizing classical computability. We believe that the collection of results presented in this article, are only first steps in these directions, but have their interest in their own. 

More concretely, $\Nobf$ can also be seen as a field of (generalized) power series with real coefficients, namely as Hahn series where exponents are surreal numbers themselves. More precisely, write $\HahnField{\K}{G}$ for the set of Hahn series with coefficients in $\K$ and terms corresponding to elements of $G$, where $\K$ is a field, and $G$ is some divisible ordered Abelian group: This means that $\HahnField{\K}{G}$ corresponds to formal power series of the form $s=\sum_{g \in S} a_{g} t^{g}$, where $S$ is a well-ordered subset of $G$ and $a_{g} \in \K .$ The support of $s$ is $\supp(s)=\enstq{g \in S}{a_{g} \neq 0}$ and the length of the serie of $s$ is the order type of $\supp(s)$. The field operations on $\HahnField{\K}{G}$ are defined as expected, considering elements of $\HahnField{\K}{G}$ as formal power series.
We have $\Nobf= \HahnField{\R}{\Nobf}$.


The purpose of this article is to study stability of subfields of $\Nobf$ by exponential and logarithm.  To do so, we will consider restrictions of $\Nobf$ by various subclasses of \textbf{ordinals}. 

In particular, it is natural to 
restrict the class of ordinals allowed in the ordinal sum, namely by restricting to ordinals up to some ordinal $\lambda$: Given some ordinal $\gamma$ (or more generally a class of ordinals), we write $\HahnFieldOrd{\K}{G}{\gamma}$ for the restriction of $\HahnField{K}{G}$ to formal power series whose support has an order type in $\gamma$ (that is to say, corresponds to some ordinal less than $\gamma$). We  have of course $\Nobf= \HahnFieldOrd{\R}{\Nobf}{\Ord}$.


In a similar spirit, it is natural to restrict signs sequences to some ordinal $\alpha$: We write $\Nolt\alpha$ for the class of surreal number whose signs 
  sequences have length less than $\alpha$ where $\alpha$ is some ordinal (or more generally some class of ordinals). 
  
Van den Dries and Ehrlich have proved the following: 

  \begin{Theorem}[\cite{DriesEhrlich01,van2001erratum}]
  \label{thm:structureNolambda}
   The ordinals $\lambda$
  such that $\Nolambda$ is closed under the various fields operations of $\No$ can be
  characterized as follows:
  \begin{itemize}
  \item $\Nolambda$ is an additive subgroup of $\No$ \tiff
    $\lambda=\omega^\alpha$ for some ordinal $\alpha$.
  \item $\Nolambda$ is a subring of $\No$ \tiff
    $\lambda=\omega^{\omega^\alpha}$ for some ordinal $\alpha$.
  \item $\Nolambda$ is a subfield of $\No$ \tiff
    $\omega^\lambda=\lambda$.
\end{itemize}
\end{Theorem}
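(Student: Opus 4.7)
The plan is to translate each closure condition on $\Nolambda$ into a closure condition on the set $\{\mu : \mu < \lambda\}$ of ordinals, via bounds on the length $\ell(x)$ of the sign sequence of $x$ under each field operation. The three ordinal-arithmetic facts I would use as a black box are: an initial segment of $\mathrm{Ord}$ is closed under Hessenberg sum $\oplus$ iff its supremum has the form $\omega^{\alpha}$, closed under Hessenberg product $\otimes$ iff its supremum has the form $\omega^{\omega^{\alpha}}$, and closed under $\mu \mapsto \omega^{\mu}$ iff its supremum is an $\epsilon$-number. These match the three bullets exactly, so the work is to connect them to sign-sequence length.

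For the additive bullet, I would first prove that $\ell(x)$, $\ell(y) < \omega^{\alpha}$ implies $\ell(x+y) < \omega^{\alpha}$, working in Conway normal form: writing $x = \sum_{i<\beta} r_i \omega^{a_i}$ and similarly for $y$, the sum merges supports, and Gonshor's expression for $\ell(\omega^{a})$ as an ordinal governed by $\ell(a)$ allows one to control $\ell(x+y)$ by a Hessenberg sum of $\ell(x)$ and $\ell(y)$; since $\omega^{\alpha}$ is closed under $\oplus$, we are done. For the converse, if $\lambda \ne \omega^{\alpha}$ for all $\alpha$ there exist $\mu_1, \mu_2 < \lambda$ with $\mu_1 \oplus \mu_2 \ge \lambda$; taking the corresponding all-pluses surreals (which lie in $\Nolambda$ and whose surreal sum is the ordinal $\mu_1 \oplus \mu_2$) yields a witness. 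The subring bullet follows the same template with the length bound $\ell(xy) < \omega^{\omega^{\alpha}}$ whenever $\ell(x), \ell(y) < \omega^{\omega^{\alpha}}$, proved by expanding the product of Conway normal forms and using $\ell(\omega^{a+b})$ estimates via $\otimes$; the counterexample picks ordinals whose Hessenberg product escapes $\lambda$.

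For the subfield bullet, closure under inversion is the delicate step. The positive direction uses the Hahn representation $\Nobf = \HahnField{\R}{\Nobf}$: factor a nonzero $x$ as $r\omega^{a}(1+\varepsilon)$ with $\varepsilon$ infinitesimal and invert via the formal geometric series $1/(1+\varepsilon) = \sum_{n} (-\varepsilon)^{n}$; the support of this series has length of order $\omega^{\ell(x)}$, so if $\omega^{\lambda}=\lambda$ then $1/x \in \Nolambda$. Conversely, if $\omega^{\lambda} > \lambda$, pick $\mu<\lambda$ with $\omega^{\mu} \ge \lambda$ and produce an explicit $x \in \Nolambda$ whose inverse has support of order type $\omega^{\mu}$ (for instance starting from $1+\omega^{-\mu}$), giving $1/x \notin \Nolambda$.

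The main obstacle is the inversion case: the sharp bound $\ell(1/x) \lesssim \omega^{\ell(x)}$ is genuinely tight and must be paired with a matching counterexample, so one has to be careful with Gonshor's precise formula for $\ell(\omega^{a})$ and with the interaction between Hahn-support length (the order type of $\mathrm{supp}(s)$) and sign-sequence length, which do not coincide but are related by Gonshor's expansion. Once this quantitative control is in place, the equivalences for each bullet fall out from the ordinal-arithmetic closure characterizations listed at the start.
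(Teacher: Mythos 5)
You should first be aware that the paper does not prove Theorem \ref{thm:structureNolambda} at all: it is quoted from van den Dries--Ehrlich (together with its erratum), where the positive directions for sums and products are obtained by induction on the recursive definitions of $+$ and $\cdot$ combined with the lemma that the simplest number in a cut whose members all have length $<\beta$ has length $\le\beta$, and where inverses are handled by the Hahn-series/geometric-series argument you describe. So your overall strategy --- length bounds for each operation, translated into closure of $\enstq{\mu}{\mu<\lambda}$ under $\oplus$, $\otimes$ and $\mu\mapsto\omega^\mu$, with ordinal witnesses for the converses --- is the right one in spirit, and your group and ring converses are correct, since the surreal sum and product of ordinals are the natural sum and product and the length of an ordinal is itself.

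There is, however, a genuine gap in the field bullet, exactly at the point you yourself flag as delicate. The proposed witness $x=1+\omega^{-\mu}$ does not work: $1/x=\sum_{n}(-1)^n\omega^{-(\mu\otimes n)}$ has support of order type $\omega$, not $\omega^\mu$, and its exponents $-(\mu\otimes n)$ are all-minus sign sequences, so by Theorem \ref{thm:serieToSignExp} each term has length about $\omega\otimes(\mu\otimes n)$ and the whole inverse has length bounded by a finite natural product of $\mu$ and copies of $\omega$, hence $<\lambda$ for every multiplicative $\lambda>\omega$; thus $1/x\in\Nolambda$ and no contradiction arises. What has to blow up is the sign-sequence length, not the support length, and for that the exponents of $1/x$ must accumulate pluses. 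A witness that does work: pick $\mu<\lambda$ with $\omega^\mu\ge\lambda$ and let $a$ be the surreal whose sign sequence is one plus followed by $\mu$ minuses; then $\length{\omega^a}=1+\omega+\omega^2\otimes\mu<\lambda$ (as $\lambda$ is multiplicative), while $1/\omega^a=\omega^{-a}$ has length at least $\sum_{\beta<\mu}\omega^{\beta+1}=\omega^{\mu}\ge\lambda$, because in $-a$ the pluses pile up. The same subtlety weakens your ring bullet as sketched: the only available estimate of the form you invoke, $\length{\omega^{a_i+b_j}}\le\omega^{\length{a_i}\oplus\length{b_j}}$ (Lemma \ref{lem:lengthOmegaA}), is useless when $\lambda$ is multiplicative but not an $\epsilon$-number, so you must either bound $\length{\omega^{a_i+b_j}}$ in terms of $\length{\omega^{a_i}}$ and $\length{\omega^{b_j}}$ (i.e.\ control the number of pluses in the exponents, via Gonshor's formula), or revert to the inductive proof on the recursive definition of the product, as in the cited source.
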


Recall that one says that ordinal $\lambda$ is an \textbf{$\epsilon$-number} \tiff
$\omega^\lambda=\lambda$, where $\omega$ is  the first transfinite ordinal.

\begin{Theorem}[\cite{DriesEhrlich01,van2001erratum}]
Let $\lambda$ be any $\epsilon$-number. Then 
  $\Nolambda$ is a real closed field. 
\end{Theorem}

Extending these results from \cite{DriesEhrlich01,van2001erratum}, actually the following can be observed: 
\begin{restatable}{Theorem}{thNoltStableExpLn}
The following are equivalent:
	\label{thm:NoltStableExpLn} 
	\begin{itemize}
	\item $\Nolambda$ is a subfield of $\No$ stable by $\exp$, and $\ln$ 
	\item   $\Nolambda$ is a subfield of $\No$ 
	\item  $\lambda$ is some $\epsilon$-number.
	\end{itemize}
\end{restatable}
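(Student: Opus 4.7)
The plan is to establish the cycle $(1) \Rightarrow (2) \Rightarrow (3) \Rightarrow (1)$. The first implication holds by definition, since stability under $\exp$ and $\ln$ presupposes being a subfield. The equivalence $(2) \Leftrightarrow (3)$ is exactly the third bullet of Theorem~\ref{thm:structureNolambda}. The substantive content therefore reduces to the implication $(3) \Rightarrow (1)$: whenever $\lambda$ is an $\epsilon$-number, the subfield $\Nolambda$ is closed under $\exp$ and under $\ln$ on positive elements.

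To prove $(3) \Rightarrow (1)$, I would work directly with Gonshor's inductive cut definition of $\exp$ and $\ln$. There $\exp(x)$ is realized as a cut whose left and right options are built from $\exp(x')$ for surreals $x'$ of strictly smaller length than $x$, together with polynomial combinations of $x$, $x'$ and the partial sums $\sum_{k \le n}(x-x')^{k}/k!$ of the exponential series. A transfinite induction on $\alpha = \ell(x)$ (writing $\ell(\cdot)$ for the length of the sign sequence), together with a standard bound on the length of a cut in terms of the lengths of its options, should deliver an estimate of the form $\ell(\exp(x)) \le \omega^{\alpha+k}$ for some fixed finite $k$. The symmetric argument, using Gonshor's corresponding formula for $\ln$ on positive surreals, yields an analogous bound for $\ell(\ln(x))$.

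The final step is purely ordinal-arithmetic and pinpoints the role of $\epsilon$-numbers. Since $\lambda = \omega^\lambda$, the class of ordinals strictly below $\lambda$ is closed both under $\alpha \mapsto \alpha + k$ for finite $k$ and under $\alpha \mapsto \omega^\alpha$; in particular $\alpha < \lambda$ implies $\omega^{\alpha+k} < \omega^\lambda = \lambda$. Hence $\ell(x) < \lambda$ forces $\ell(\exp(x)) < \lambda$ and, when $x > 0$, $\ell(\ln(x)) < \lambda$, so both values remain in $\Nolambda$.

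The main obstacle is producing a length bound for $\exp$ and $\ln$ that is uniform enough to carry through the transfinite induction: Gonshor's cut formula distinguishes infinite, real and infinitesimal parts of $x$, and each polynomial combination of partial sums must be tracked in length. Fortunately only the qualitative shape of the bound matters, since $\epsilon$-numbers are robustly closed under any composition of $\alpha \mapsto \omega^\alpha$ with finite ordinal arithmetic, so a loose estimate suffices. This slack is exactly what singles out $\epsilon$-numbers as the natural stability threshold for $\exp$ and $\ln$ on $\Nolambda$.
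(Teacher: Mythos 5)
Your reduction is sound and is essentially the paper's own route: the implication from stability to subfield is trivial, the equivalence of the subfield property with $\lambda$ being an $\epsilon$-number is the third bullet of Theorem~\ref{thm:structureNolambda}, and the substantive direction is closure of $\Nolambda$ under $\exp$ and $\ln$ when $\lambda$ is an $\epsilon$-number, which the paper gets from the van den Dries--Ehrlich length estimates (Lemma~\ref{lem:lengthExpLog} and Corollary~\ref{cor:NolambdaStableExpLn}). (The paper additionally shows that for non-$\epsilon$-numbers closure under $\exp$ fails outright, but as you observed this is not needed for the equivalence as stated.)

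The one genuine problem is the specific estimate you conjecture, $\length{\exp x}\leq\omega^{\length x+k}$ for a fixed finite $k$: it is false. Take $x=\epsilon_0$, which is the monomial $\omega^{\epsilon_0}$ and purely infinite; by Propositions~\ref{prop:formeExpXPurelyInfiniteOmegaAg} and~\ref{prop:gord}, $\exp(\epsilon_0)=\omega^{\omega^{g(\epsilon_0)}}=\omega^{\omega^{\epsilon_0+1}}$, an ordinal of length $\omega^{\omega^{\epsilon_0+1}}=\omega^{\epsilon_0\cdot\omega}$, whereas $\length{\epsilon_0}=\epsilon_0$, so no bound of the form $\omega^{\epsilon_0+k}$ can hold; the same bump of $g$ at any $\epsilon$-number below $\lambda$ produces such examples inside $\Nolambda$. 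Your hedge that ``only the qualitative shape matters'' does rescue the conclusion: the correct bounds are $\length{\exp a}\leq\omega^{\omega^{2\length a\oplus3}}$ and $\length{\ln a}\leq\omega^{\omega^{3\length a\oplus3}}$, and $\epsilon$-numbers are closed under these maps, so the final ordinal-arithmetic step goes through verbatim. But be aware that obtaining such bounds is more than ``a standard bound on the length of a cut'' plus transfinite induction: a naive argument that bounds each option and takes a supremum over the (possibly $\length x$-many) prefixes does not stay below $\lambda$ when $\lambda$ is singular (e.g.\ $\lambda=\epsilon_0$ has cofinality $\omega$), which is precisely why one needs an explicit uniform bounding function of $\length x$, proved by induction together with prior length estimates for sums, products, $\omega^a$ and $g$ --- this is the content of van den Dries--Ehrlich's Lemmas 5.2--5.4, which the paper cites rather than reproves.
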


As we will often play with exponents of formal power series considered in the Hahn series, we propose to introduce the following notation:  We denote $$\SRF{\lambda}{\Gamma}=\HahnFieldOrd\Rbb{\Gamma}\lambda$$ when  $\lambda$ is an $\epsilon$-number and $\Gamma$ a divisible Abelian group.

As a consequence of MacLane's theorem (Theorem \ref{thm:macLane} below from \cite{maclane1939}, see also \cite[section 6.23]{alling1987foundations}), we know that $\SRF\lambda{\Nolt\mu}$ is a real-closed field when $\mu$ is a \textbf{multiplicative ordinal} (i.e. $\mu=\omega^{\omega^\alpha}$ for some ordinal $\alpha$) and $\lambda$ an $\epsilon$-number.

Furthermore:

\begin{restatable}[{\cite[Proposition 4.7]{DriesEhrlich01}}]{Theorem}{thEhrlichquatresept}
\label{th:Ehrlichquatresept}
Let $\lambda$ be an $\epsilon$-number. Then
\begin{enumerate}
\item The field $\Nolambda$ can be expressed as \begin{equation} \label{eq:troissix} 
\Nolambda=\bigcup_{\mu} \SRF{\lambda}{\Nolt\mu},
\end{equation}
where $\mu$ ranges over the additive ordinals less than $\lambda$ (equivalently, $\mu$ ranges over the multiplicative ordinals less than $\lambda$ ).
\item  $\Nolambda$ is a real closed subfield of $\No$, and is closed under the restricted analytic functions of $\No$.
\item $\Nolambda=\SRF{\lambda}{\Nolt\lambda}$ if and only if $\lambda$ is a regular cardinal.
\end{enumerate}
\end{restatable}


Actually, even if we always can write $\Nolambda$ as an increasing union of fields by Equation \eqref{eq:troissix},  and even if $\Nolt\lambda$ is stable under exponential and logarithmic functions (Theorem \ref{thm:NoltStableExpLn}) none of the fields in this union has stability property beyond the fact that they are fields. Indeed, we prove the following:

\begin{restatable}{Proposition}{firstinstable}
\label{prop:instable}
	$\SRF\lambda{\Nolt\mu}$ is never closed under exponential function for $\mu<\lambda$ a multiplicative ordinal. 
\end{restatable}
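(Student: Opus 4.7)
The plan is to exhibit, for each multiplicative ordinal $\mu<\lambda$, a purely infinite element $x\in\SRF{\lambda}{\Nolt{\mu}}$ with $\exp(x)\notin\SRF{\lambda}{\Nolt{\mu}}$. The key tool is Gonshor's formula for the surreal exponential on purely infinite arguments: if $x=\sum_{\alpha}r_{\alpha}\omega^{a_{\alpha}}$ is in Conway normal form with every exponent $a_{\alpha}\geq 1$, then $\exp(x)=\omega^{x}$, a single-term Hahn series whose sole exponent is $x$ itself (this follows from $\exp(\omega^{a})=\omega^{\omega^{a}}$ for $a\geq 1$ together with $\omega^{u+v}=\omega^{u}\omega^{v}$). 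Consequently, $\exp(x)\in\SRF{\lambda}{\Nolt{\mu}}$ would force $x$ to lie in $\Nolt{\mu}$, so it suffices to produce $x\in\SRF{\lambda}{\Nolt{\mu}}\setminus\Nolt{\mu}$ all of whose Conway exponents are $\geq 1$. I split according to whether $\mu$ is an $\epsilon$-number.

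If $\mu$ is not an $\epsilon$-number, I write $\mu=\omega^{\omega^{\beta}}$; then $\omega^{\beta}<\omega^{\omega^{\beta}}=\mu$, so $\omega^{\beta}\in\Nolt{\mu}$. Taking $x=\mu$, its Hahn representation is the single term $\omega^{\omega^{\beta}}$ whose exponent $\omega^{\beta}$ lies in $\Nolt{\mu}$ and is $\geq 1$; hence $x\in\SRF{\lambda}{\Nolt{\mu}}$ and Gonshor's formula yields $\exp(\mu)=\omega^{\mu}$. Since the sign-length of $\mu$ equals $\mu$ itself, the exponent $\mu$ does not belong to $\Nolt{\mu}$, and so $\omega^{\mu}\notin\SRF{\lambda}{\Nolt{\mu}}$.

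If $\mu$ is an $\epsilon$-number, then $\mu=\omega^{\mu}$ and the naive choice $x=\mu$ fails because its Hahn exponent $\mu$ is no longer in $\Nolt{\mu}$. Instead, I pick a strictly decreasing family $(a_{\alpha})_{\alpha<\mu}$ of surreals in $(1,2]$ with sign-lengths below $\mu$ but cofinal in $\mu$ (for instance $a_{\alpha}=1+\omega^{-\alpha}$, $\alpha$ an ordinal below $\mu$, so that $\mathrm{sl}(a_{\alpha})\sim\omega^{\alpha}$ and $\sup_{\alpha<\mu}\omega^{\alpha}=\omega^{\mu}=\mu$), and set $x=\sum_{\alpha<\mu}\omega^{a_{\alpha}}$. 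Then $x\in\SRF{\lambda}{\Nolt{\mu}}$: all exponents lie in $\Nolt{\mu}$ and the support has length $\mu<\lambda$. Every Conway exponent $a_{\alpha}$ is $\geq 1$, so Gonshor's formula still gives $\exp(x)=\omega^{x}$; and applying the Gonshor sign-length formula for Hahn series I obtain $\mathrm{sl}(x)\geq\sup_{\alpha<\mu}\omega^{\mathrm{sl}(a_{\alpha})}=\omega^{\mu}=\mu$, so $x\notin\Nolt{\mu}$ and hence $\omega^{x}\notin\SRF{\lambda}{\Nolt{\mu}}$ exactly as before.

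The hard part is the sign-length estimate in the $\epsilon$-case: I need Gonshor's explicit description of the sign sequences of the generalised monomials $\omega^{a}$ together with a careful additive accounting of the contributions of the $\mu$ terms to verify $\mathrm{sl}(x)\geq\mu$. Everything else reduces to a direct application of $\exp(x)=\omega^{x}$ on suitably chosen purely infinite $x$.
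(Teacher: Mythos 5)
Your overall strategy is the same as the paper's (split on whether $\mu$ is an $\epsilon$-number; in the first case exponentiate an ordinal monomial, in the second a purely infinite series of length $\mu$ with short exponents), but the pivot of your argument is a false lemma. You claim that $\exp(\omega^{a})=\omega^{\omega^{a}}$ for all $a\geq 1$, hence $\exp x=\omega^{x}$ whenever every Conway exponent of $x$ is $\geq 1$. Gonshor's actual formula (Proposition \ref{prop:formeExpXPurelyInfiniteOmegaAg}) is $\exp\bigl(\sum_{i}r_{i}\omega^{a_{i}}\bigr)=\omega^{\sum_{i}r_{i}\omega^{g(a_{i})}}$, and $g$ is not the identity on $[1,+\infty)$: for instance $g(\epsilon_{0})=\epsilon_{0}+1$ (Proposition \ref{prop:gord}), so $\exp(\omega^{\epsilon_{0}})=\omega^{\omega^{\epsilon_{0}+1}}\neq\omega^{\omega^{\epsilon_{0}}}$. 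Your two witnesses happen to avoid the failure — in the non-$\epsilon$ case $g(\omega^{\beta})=\omega^{\beta}$ because $\omega^{\beta}$ is additively indecomposable and not an $\epsilon$-number, and in the $\epsilon$ case $g(1+\omega^{-\alpha})=1+\omega^{-\alpha}$ by Gonshor's results on $g$ for numbers lying between all $n\omega^{-1}$ and an ordinal below $\epsilon_{0}$ — but these verifications, which are exactly the content of the $g$-function propositions, are nowhere in your argument, and without them the central step is unsupported. Note also that full equality $g(a)=a$ is not needed: the paper only uses $g(\omega^{\beta})\geq\omega^{\beta}$ in the first case (so the exponent of the resulting monomial already has length $\geq\mu$), and the explicit value $g(\omega^{-i})=-i+1$ (Proposition \ref{prop:gMonomeInfinitesimal}) for its witness $\sum_{0<i<\mu}\omega^{\omega^{-i}}$ in the second.

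Two smaller points. The inequality $\length{x}\geq\sup_{\alpha<\mu}\omega^{\length{a_{\alpha}}}$ you invoke is not a valid principle: Lemma \ref{lem:lengthOmegaA} goes the other way ($\length{\omega^{a}}\leq\omega^{\length{a}}$), and already $\length{\omega^{1+\omega^{-1}}}=\omega^{4}<\omega^{\omega}=\omega^{\length{1+\omega^{-1}}}$; likewise $\length{1+\omega^{-\alpha}}$ is $\omega\otimes\alpha$ (up to a finite shift), not $\omega^{\alpha}$. What you actually need, $\length{x}\geq\mu$, is immediate and not "the hard part": by Theorem \ref{thm:serieToSignExp} the signs sequence of $x$ is the juxtaposition of $\mu$ nonempty blocks (alternatively, combine Lemmas \ref{lem:lengthTerm} and \ref{lem:lengthOmegaA} to get $\length{x}\geq\sup_{\alpha<\mu}\length{a_{\alpha}}=\mu$). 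With the $g$-computations supplied and the length estimate done this way, your proof goes through and is essentially the paper's, with $\omega^{1+\omega^{-\alpha}}$ playing the role of the paper's $\omega^{\omega^{-i}}$.
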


As we said, the purpose of this article is to study stability of subfields of $\Nobf$ by exponential and logarithm.

Some previous work actually studied $\SRF\kappa{\Nolt\kappa}$ for \textbf{regular cardinals $\kappa$}. More precisely, \cite[Proposition 4.1 and Corollary 5.5]{DriesEhrlich01} and above theorems show that $\Nolt\kappa = \SRF\kappa{\Nolt\kappa}$ \tiff $\kappa$ is a regular cardinal. They also showed that it is stable under exponential and logarithm under the same hypothesis. In 2016, Galeotti studied the completion of $\Nolt\kappa$ in \cite{galeotti2016candidate} to make it a candidate for the generalization of $\R$ giving up the Archimedean property. However, in the purpose of effectiveness and representations for ordinal Turing machines, we want to consider \textbf{ordinals} as small as possible to identify natural subfields stable by exponential and logarithm. 

Notice that, using classical techniques from the theory of admissible sets \cite{barwise1975admissible}, it can rather easily be established that we have stability by exponential and logarithms for surreal fields in some admissible sets. More precisely, reproducing the proofs of stability in Kripke-Platek set theory proof, Theorem \ref{th:Ehrlichquatresept} holds in an admissible set $A$ (strictly greater than $\omega$).  In this setup,  for example, the first non-computable ordinal $\omegaunCK$ (which is also the first admissible ordinal greater than $\omega$),  is for example actually a regular cardinal \textit{with respect to $\Sigma_{1}$ definable functions}, that is to say  a regular cardinal \textit{with respects to functions in $L_{\omegaunCK}$}.

We mean, we have the following:

\begin{Theorem}
	Theorem \ref{th:Ehrlichquatresept} holds in any admissible set $A$: If  $\Nolambda^{A}$ denotes the  surreal numbers  \textit{in $A$} of length less than $\lambda$ for $\lambda$ an $\epsilon$-number, and $\HahnFieldOrd{\Rbb}{\Gamma}\lambda^{A}$ is  the field of  Hahn series \textit{in $A$} of $\HahnFieldOrd{\Rbb}{\Gamma}\lambda$
	we have that $\Nolt{\alpha}^{A}$ and $\HahnFieldOrd\Rbb{\Nolt\alpha^{A}}\alpha^{A}$ are isomorphic and stable under exponential and logarithmic functions, where $\alpha$ is the ordinal of $A$ (the first ordinal not in $A$).
	
	In particular, this holds for $\alpha=\omegaunCK$ and $A=L_{\omegaunCK}$.
\end{Theorem}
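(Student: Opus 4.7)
The plan is to relativize the proof of Theorem \ref{th:Ehrlichquatresept} to the admissible set $A$, exploiting the fact that an admissible ordinal $\alpha$ behaves, with respect to the $\Sigma_1$-definable functions of $A$, exactly as a regular cardinal behaves for arbitrary functions. All verifications reduce to checking that the constructions appearing in the van den Dries--Ehrlich proof are $\Sigma_1$ over $A$, so that the $\Sigma_1$-collection schema provided by admissibility replaces the use of regularity.

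First, I would check that the basic machinery --- Gonshor's sign-sequence coding, the Conway normal form, $\omega$-exponentiation used as the indexing isomorphism of Hahn terms, Gonshor's definitions of $\exp$ and $\log$, and the field operations on Hahn series --- are all $\Sigma_1$-definable (indeed $\Delta_1$ on their natural domains). Each is given by a transfinite recursion on sign sequences of ordinal length, a scheme available in Kripke--Platek set theory. Consequently every operation preserves $A$, provided that the ordinal data witnessing the output has length $<\alpha$.

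Second, I would transfer part (3) of Theorem \ref{th:Ehrlichquatresept}, namely the identification $\Nolt{\alpha}^A \cong \HahnFieldOrd{\Rbb}{\Nolt\alpha^A}{\alpha}^A$. Classically, if $\lambda$ is a regular cardinal, a Hahn series $\sum_{g\in S}a_g t^g$ of length $<\lambda$ with exponents of sign-sequence length $<\lambda$ produces a surreal of length $<\lambda$ because $\sup_{g\in S}\mathrm{length}(g)<\lambda$. In the admissible setting, $S\in A$ and the map $g\mapsto\mathrm{length}(g)$ is a $\Sigma_1$-definable function from $S$ into $\alpha$; $\Sigma_1$-collection bounds its range strictly below $\alpha$, so the total sign-sequence length stays $<\alpha$. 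The converse direction, writing a surreal in $A$ of length $<\alpha$ in its Conway normal form and verifying that the support is itself an element of $A$ of order-type $<\alpha$, is obtained by $\Sigma_1$-recursion inside $A$.

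Third, with this identification available, stability under $\exp$ and $\log$ is obtained by running the proof of \cite{DriesEhrlich01} verbatim inside $A$. Decomposition of a surreal into purely infinite, real and infinitesimal parts, Gonshor's series formulas, and the length estimates on the supports of the resulting Hahn series are all $\Sigma_1$-constructions on data in $A$; the only length bounds used are of the same kind as in step two and again reduce to $\Sigma_1$-collection. Parts (1) and (2) of Theorem \ref{th:Ehrlichquatresept} --- the union decomposition over multiplicative ordinals $\mu<\alpha$ and closure under restricted analytic functions --- follow by the same relativization. Applying this to $\alpha=\omegaunCK$ and $A=L_{\omegaunCK}$, which is by definition the least admissible set properly containing $\omega$, gives the last sentence of the statement.

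The main obstacle is the bookkeeping of definability: one must verify, step by step, that each ingredient in the classical arguments --- Gonshor's $g$-map used to define $\exp$, infinite Hahn sums, the simplicity recursion defining $\Nobf$ --- is $\Sigma_1$ over an admissible set, so that every appeal to ``regularity of $\lambda$'' in \cite{DriesEhrlich01} can be legitimately replaced by $\Sigma_1$-collection in $A$. None of these checks is difficult in isolation, since they are standard applications of $\Sigma_1$-recursion and collection in the spirit of \cite{barwise1975admissible}, but they must be performed carefully to ensure no step secretly requires full replacement or true regularity.
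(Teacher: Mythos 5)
Your proposal follows essentially the same route as the paper, which proves this result precisely by reproducing the arguments of Theorem \ref{th:Ehrlichquatresept} inside Kripke--Platek set theory and observing that the ordinal of an admissible set is regular with respect to the $\Sigma_1$-definable functions of $A$, so that every appeal to regularity is replaced by $\Sigma_1$-collection. Your write-up is in fact more explicit about the definability bookkeeping (checking that Gonshor's recursions and the Hahn-series manipulations are $\Sigma_1$/$\Delta_1$) than the paper's own sketch, and is correct.
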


Its proofs consists basically in reproducing the arguments of  Theorem \ref{th:Ehrlichquatresept}  Kripke-Platek set theory, and observing that same conclusions hold.

With the previous kind of results, and techniques, it is then possible to build subfields of $\Nobf$ that are stable under exponential and logarithm and that involve countable ordinals. However, this sounds as cheating since we just change what is authorized in the signs sequences and in the series so that the theorem holds. Moreover, it does not change the fact that we handle non-computable ordinals. In this article, our ambition is to do better: We are motivated by considering ordinals that could be really smaller than $\omegaunCK$ (and \textit{a fortiori} regular cardinals) hence that are computable.

To achieve that purpose, we will have to handle carefully the $\epsilon$-numbers that are involved. Recall that there is some enumeration $(\epsilon_{\alpha})_{\alpha \in \Ord}$of $\epsilon$-numbers: 
Any $\epsilon$-number ordinal $\lambda$ is $\epsilon_\alpha$ for some ordinal $\alpha$.

\begin{Definition}[Canonical sequence defining an $\epsilon$-number]
	\label{def:gammaLambda}
	Let $\lambda$ be an $\epsilon$-number.  Ordinal $\lambda$ can always  be written as $\lambda=\sup\suitelt{e_\beta}\beta{\gamma_\lambda}$ for some \textbf{canonical sequence}, where $\gamma_\lambda$ is the length of  this sequence, and this sequence is defined as follows:
	\begin{itemize}
		\item If $\lambda=\epsilon_{0}$ then we can write $\epsilon_0=\sup\{\omega,\omega^\omega,\omega^{\omega^\omega},\dots\}$ and  we take\linebreak $\omega,\omega^\omega,\omega^{\omega^\omega},\dots$ as canonical sequence for $\epsilon_0$. Its length is $\omega$, and for $\beta<\lambda$, $e_\beta$ is $\omega^{\iddots^\omega}$ where there are $\beta$ occurrences of $\omega$ in the exponent.
		
		\item If $\lambda=\epsilon_{\alpha}$, where $\alpha$ is a non-zero limit ordinal, then we can write $\lambda=\underset{\beta<\alpha}\sup\epsilon_\beta$ and we take $\suitelt{\epsilon_\beta}\beta\alpha$ as the canonical sequence of $\lambda$. Its length is $\alpha$ and for $\beta<\alpha$, $e_\beta=\epsilon_\beta$.
		
		\item If $\lambda=\epsilon_{\alpha}$, where  $\alpha$ is a successor ordinal, then we can write 
		$$\lambda=\sup\{\epsilon_{\alpha-1}, {\epsilon_{\alpha-1}}^{\epsilon_{\alpha-1}}, {\epsilon_{\alpha-1}}^{{\epsilon_{\alpha-1}}^{\epsilon_{\alpha-1}}},\dots\}$$
		and we take $\epsilon_{\alpha-1}, {\epsilon_{\alpha-1}}^{\epsilon_{\alpha-1}}, {\epsilon_{\alpha-1}}^{{\epsilon_{\alpha-1}}^{\epsilon_{\alpha-1}}},\dots$ as the canonical sequence of $\lambda$. Its length is $\omega$, and for $\beta<\omega$, $e_\beta={\epsilon_{\alpha-1}}^{\iddots^{\epsilon_{\alpha-1}}}$ where there are $\beta$ occurrences of $\epsilon_{\alpha-1}$ in the exponent.
	\end{itemize}
\end{Definition}

For example, the canonical sequence of $\epsilon_1$ is $\epsilon_0,{\epsilon_0}^{\epsilon_0},{\epsilon_0}^{{\epsilon_0}^{\epsilon_0}},\dots$, the canonical sequence of $\epsilon_\omega$ is $\epsilon_0,\epsilon_1,\epsilon_2,\dots$, the canonical sequence of $\epsilon_{\omega2}$ is \linebreak $\epsilon_0,\epsilon_1,\epsilon_2,\dots,\epsilon_\omega,\epsilon_{\omega+1},\dots$ and the canonical sequence of $\epsilon_{\omega2+1}$ is \linebreak ${\epsilon_{\omega2}},{\epsilon_{\omega2}}^{\epsilon_{\omega2}}, {\epsilon_{\omega2}}^{{\epsilon_{\omega2}}^{\epsilon_{\omega2}}},\dots$

\begin{Definition}
	\label{def:uparrow}
	Let $\Gamma$ be an Abelian subgroup of \Nobf{} and $\lambda$ be an $\epsilon$-number whose canonical sequence is $\suitelt{e_\beta}\beta{\gamma_\lambda}$. 
	We denote $\Gamma^{\uparrow\lambda}$ for the family of group $\suitelt{\Gamma_\beta}\beta{\gamma_\lambda}$ defined as follows:
	\begin{itemize}
		\item $\Gamma_0=\Gamma$;
		
		\item $\Gamma_{\beta+1}$ is the group generated by $\Gamma_\beta$, $\SRF{e_\beta}{g\pa{(\Gamma_\beta)^*_+}}$ and $\enstq{h(a_i)}{\aSurreal\in\Gamma_\beta}$
		where $g$ and $h$ are Gonshor's functions associated to exponential and logarithm (see Section \ref{sec:expoLn} below for some details);
		
		\item For limit ordinal numbers $\beta$, $\Gamma_\beta=\Unionlt\gamma\beta \Gamma_\gamma$.
	\end{itemize}
\end{Definition}

When considering a family of set $(S_i)_{i\in I}$, we denote
$$
	\SRF\lambda{(S_i)_{i\in I}} = \Unionin iI \SRF\lambda{S_i}
$$
In particular,
$$
	\SRF\lambda{\Gamma^{\uparrow\lambda}} = \Unionlt i{\gamma_\lambda}\SRF\lambda{\Gamma_i}
$$
\begin{Remark}\label{rk:inclusionUparrow}
	By construction, if $\Gamma\subseteq\Gamma'$ then $\SRF\lambda{\Gamma^{\uparrow\lambda}} \subseteq \SRF\lambda{{\Gamma'}^{\uparrow\lambda}}$.
\end{Remark}
The idea behind the definition of $\Gamma^{\uparrow}$ is that at step $i+1$ we add new elements to close $\SRF\lambda{\Gamma_i}$ under exponential and logarithm. The reason why we add $\SRF{e_\beta}{g\pa{(\Gamma_\beta)^*_+}}$ to $\Gamma_\beta$ rather than $\SRF\lambda{g\pa{(\Gamma_\beta)^*_+}}$ is that we want to keep control on what we add in the new group. It will be actually useful in the proofs. 

That said, we prove the following theorem:

\begin{restatable}{Theorem}{thmSRFGammaUpStableExpLn}
	\label{thm:SRFGammaUpStableExpLn}
	Let $\Gamma$ be an Abelian subgroup of \Nobf{} and $\lambda$ be an $\epsilon$-number, then
	$\SRF\lambda{\Gamma^{\uparrow\lambda}}$ is stable under exponential and logarithmic functions.
\end{restatable}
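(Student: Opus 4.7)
The plan is to do transfinite induction on $\beta < \gamma_\lambda$, establishing that for every $x \in \SRF\lambda{\Gamma_\beta}$, both $\exp(x)$ and (when $x>0$) $\ln(x)$ belong to some $\SRF\lambda{\Gamma_{\beta'}}$ with $\beta'<\gamma_\lambda$; since $\SRF\lambda{\Gamma^{\uparrow\lambda}} = \bigcup_{\beta<\gamma_\lambda}\SRF\lambda{\Gamma_\beta}$, the theorem follows. The engine is Gonshor's decomposition of $\exp$ and $\ln$. Every surreal $x$ factors uniquely as $x = x_\infty + r + \eta$ with $x_\infty$ purely infinite, $r\in\R$ and $\eta$ infinitesimal, and
\[\exp(x) = \omega^{g(x_\infty)}\cdot e^r\cdot\exp(\eta), \qquad \exp(\eta)=\sum_{n\geq 0}\eta^n/n!.\]
Dually, a positive surreal $x$ with leading monomial $c\,\omega^a$ and infinitesimal remainder $\xi$ satisfies $\ln(x) = h(a) + \ln(c) + \sum_{n\geq 1}(-1)^{n+1}\xi^n/n$.

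I would first dispatch the elementary contributions: $e^r$ and $\ln(c)$ lie in $\R\subseteq \SRF\lambda{\Gamma_\beta}$, and the infinitesimal series $\exp(\eta)$ and $\sum(-1)^{n+1}\xi^n/n$ have supports of order type bounded by $\omega$-iterates of $\supp(\eta)$ (resp.\ $\supp(\xi)$); since $\lambda$ is an $\epsilon$-number these order types remain $<\lambda$, and the exponents stay in $\Gamma_\beta$, so these terms stay in $\SRF\lambda{\Gamma_\beta}$. All the work is concentrated in the purely infinite contributions $\omega^{g(x_\infty)}$ and $h(a)$.

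For the exponential, writing $x_\infty = \sum_{i<\mu} r_i\,\omega^{a_i}$ with $a_i\in(\Gamma_\beta)^*_+$ and $\mu<\lambda$, cofinality of $(e_\beta)_{\beta<\gamma_\lambda}$ in $\lambda$ lets me pick $\beta'\geq\beta$ with $\mu<e_{\beta'}$. Gonshor's formula exhibits $g(x_\infty)$ as a Hahn series with support in $g((\Gamma_{\beta'})^*_+)$ and length bounded by the length of $x_\infty$; hence $g(x_\infty)\in \SRF{e_{\beta'}}{g((\Gamma_{\beta'})^*_+)}\subseteq \Gamma_{\beta'+1}$ by the very definition of $\Gamma_{\beta'+1}$, and therefore $\omega^{g(x_\infty)}\in \SRF\lambda{\Gamma_{\beta'+1}}$. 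For the logarithm, Gonshor's formula expresses $h(a)$ in terms of the family $\{h(a_i)\}_i$ extracted from the Conway form of $a\in\Gamma_\beta$, all of which lie in $\Gamma_{\beta+1}$ by construction; hence $h(a)\in\SRF\lambda{\Gamma_{\beta+1}}$. Multiplying (resp.\ summing) the pieces collects $\exp(x)$ and $\ln(x)$ inside $\SRF\lambda{\Gamma_{\beta'+1}}$, which is contained in $\SRF\lambda{\Gamma^{\uparrow\lambda}}$.

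The main obstacle is the tight length bookkeeping in the previous step: one has to verify that Gonshor's formulas for $g$ and $h$ never inflate the length of the Conway normal form beyond that of their input, so that picking $\beta'$ with $\mu<e_{\beta'}$ really buys enough room to drop into $\SRF{e_{\beta'}}{\cdots}$. This is precisely why the definition of $\Gamma_{\beta+1}$ absorbs the whole field $\SRF{e_\beta}{g((\Gamma_\beta)^*_+)}$ rather than only the individual generators $g(\omega^{a_i})$: it guarantees that closing under a single exponential or logarithm costs only one level in the hierarchy $(\Gamma_\beta)_{\beta<\gamma_\lambda}$, and that the product structure inside $\SRF\lambda{\Gamma_{\beta'+1}}$ suffices to combine all the pieces produced by Gonshor's decomposition.
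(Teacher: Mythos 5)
Your argument is correct and follows essentially the same route as the paper: Gonshor's decomposition into purely infinite, real and infinitesimal parts, the order-type bound of Proposition \ref{prop:orderTypeMonoid} for the appreciable/infinitesimal pieces, and cofinality of the canonical sequence $(e_\beta)_{\beta<\gamma_\lambda}$ together with the definition of $\Gamma_{\beta+1}$ to absorb the $g$- and $h$-images. The only difference is organizational: the paper factors these computations through the general criterion of Proposition \ref{prop:UnionSRFStableExpLn} (stability of $\bigcup_i\SRF\lambda{\Gamma_i}$ under $\exp$ and $\ln$ if and only if $\bigcup_i\Gamma_i=\bigcup_i\SRF\lambda{g((\Gamma_i)^*_+)}$) and then verifies that criterion for $\Gamma^{\uparrow\lambda}$, whereas you inline the same steps directly.
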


With such a notion, we can now make a link between the two types of field involved in Theorems \ref{thm:NoltStableExpLn} and \ref{thm:SRFGammaUpStableExpLn}. More precisely, the fields $\SRF\lambda{\Gamma^{\uparrow\lambda}}$ are part of the fields $\Nolt\lambda$.

\begin{restatable}{Theorem}{thmNolambdaDecompCorpsStables}
	\label{thm:NolambdaDecompCorpsStables}
	$\Nolambda=\bigcup_{\mu} \SRF{\lambda}{{\Nolt{\mu}}^{\uparrow \lambda}}$, where $\mu$ ranges over the additive ordinals less than $\lambda$ (equivalently, $\mu$ ranges over the multiplicative ordinals less $\lambda$),
\end{restatable}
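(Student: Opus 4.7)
The plan is to establish both inclusions separately. Throughout the argument, abbreviate $\Gamma_i := (\Nolt{\mu})_i$ for the family produced by Definition \ref{def:uparrow}.

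\textbf{The inclusion} $\Nolambda \subseteq \bigcup_\mu \SRF{\lambda}{(\Nolt{\mu})^{\uparrow\lambda}}$ is immediate. By Theorem \ref{th:Ehrlichquatresept}(1), $\Nolambda = \bigcup_\mu \SRF{\lambda}{\Nolt{\mu}}$ with $\mu$ ranging over the multiplicative ordinals below $\lambda$, and Definition \ref{def:uparrow} sets $\Gamma_0 = \Nolt{\mu}$, so $\Nolt{\mu} \subseteq (\Nolt{\mu})^{\uparrow\lambda}$; monotonicity of $\SRF{\lambda}{\cdot}$ in its second argument closes the case.

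\textbf{The inclusion} $\bigcup_\mu \SRF{\lambda}{(\Nolt{\mu})^{\uparrow\lambda}} \subseteq \Nolambda$ is the substantive direction. Fix a multiplicative $\mu<\lambda$. Since $\SRF{\lambda}{(\Nolt{\mu})^{\uparrow\lambda}} = \bigcup_{i<\gamma_\lambda} \SRF{\lambda}{\Gamma_i}$, it suffices to exhibit, for each $i<\gamma_\lambda$, a multiplicative ordinal $\nu_i<\lambda$ with $\Gamma_i \subseteq \Nolt{\nu_i}$. Indeed, monotonicity and Theorem \ref{th:Ehrlichquatresept}(1) will then give $\SRF{\lambda}{\Gamma_i} \subseteq \SRF{\lambda}{\Nolt{\nu_i}} \subseteq \Nolambda$.

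I would build the $\nu_i$ by transfinite induction on $i<\gamma_\lambda$. Take $\nu_0 := \mu$. At a successor stage $i{+}1$, given $\Gamma_i \subseteq \Nolt{\nu_i}$, the group $\Gamma_{i+1}$ is generated by $\Gamma_i$, the Hahn field $\SRF{e_i}{g((\Gamma_i)^*_+)}$, and the coordinates $h(a_j)$ for $a \in \Gamma_i$. Invoking the length estimates on the Gonshor maps $g$ and $h$ developed from Section \ref{sec:expoLn}, all values $g(a)$ and $h(a_j)$ with $a \in \Nolt{\nu_i}$ lie in $\Nolt{\rho}$ for some multiplicative $\rho<\lambda$ determined by $\nu_i$ alone. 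Any element of $\SRF{e_i}{\Nolt{\rho}}$ then has support of order type $<e_i<\lambda$ with exponents in $\Nolt{\rho}$; the standard bound translating Hahn-series length into signs-sequence length places such a series inside $\Nolt{\nu_{i+1}}$ for some multiplicative $\nu_{i+1}<\lambda$ depending only on $\rho$ and $e_i$. At a limit $i<\gamma_\lambda$, we set $\nu_i$ to be any multiplicative ordinal bounding $\sup_{j<i}\nu_j$ from above (for example $\omega^{\omega^{\sup_{j<i}\nu_j}}$): this stays strictly below $\lambda$ as long as the $\nu_j$ have been controlled by a function of the canonical sequence $(e_j)_{j<\gamma_\lambda}$, since $\sup_{j<i} e_j < \lambda$ for every $i<\gamma_\lambda$.

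\textbf{Main obstacle.} The crux is the successor step, where one must make quantitative the inflation of length caused by $g$, by $h$, and by the Hahn construction $\SRF{e_i}{\cdot}$, and verify that the resulting $\nu_{i+1}$ is still below $\lambda$. This is exactly the point of the design choice in Definition \ref{def:uparrow} to use $\SRF{e_i}{\cdot}$ rather than $\SRF{\lambda}{\cdot}$ when enlarging $\Gamma_i$: bounding the Hahn length by $e_i$ keeps the inductive bookkeeping within reach of the hypothesis, whereas $\SRF{\lambda}{\cdot}$ would immediately allow supports cofinal in $\lambda$ and break the induction.
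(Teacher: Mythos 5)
Your proposal is correct and follows essentially the same route as the paper: the easy inclusion comes from Theorem \ref{th:Ehrlichquatresept}(1) plus monotonicity, and the substantive inclusion is proved by the same transfinite induction showing $\Gamma_i\subseteq\Nolt{\nu_i}$ for some multiplicative $\nu_i<\lambda$ via the length bounds on $g$, $h$ and $\omega^a$, which is exactly the paper's Proposition \ref{prop:stableExpLnContenuNoLambda}. The only difference is that the paper makes your ``controlled by the canonical sequence'' requirement explicit by choosing $\nu_i=e_{k\oplus 2\otimes i}$, which is precisely what keeps the limit stages strictly below $\lambda$.
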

Notice that now,  $\Nolambda$ is  expressed as a increasing union of fields, each of them closed by $\exp$ and $\ln$. Indeed, by definition, if $\mu<\mu'$ then $\Nolt\mu\subseteq\Nolt{\mu'}$ and Remark \ref{rk:inclusionUparrow} gives $\SRF{\lambda}{{\Nolt{\mu}}^{\uparrow \lambda}} \subseteq \SRF{\lambda}{{\Nolt{\mu'}}^{\uparrow \lambda}}$. 

We finally state that each field $\SRF{\lambda}{{\Nolt{\mu}}^{\uparrow \lambda}}$ is interesting for itself since none of them is $\Nolambda$. More precisely: 

\begin{restatable}{Theorem}{thmhierarchieUparrow}
\label{thm:hierarchieUparrow}
	For all $\epsilon$-number $\lambda$, the hierarchy in previous theorem is strict:
	$$\SRF{\lambda}{{\Nolt{\mu}}^{\uparrow \lambda}} \subsetneq \SRF{\lambda}{{\Nolt{\mu'}}^{\uparrow \lambda}}$$	
	for all multiplicative ordinals $\mu$ and $\mu'$ such that $\omega<\mu<\mu'<\lambda$.
\end{restatable}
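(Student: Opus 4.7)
The inclusion $\SRF{\lambda}{\Nolt{\mu}^{\uparrow\lambda}} \subseteq \SRF{\lambda}{\Nolt{\mu'}^{\uparrow\lambda}}$ is immediate from Remark~\ref{rk:inclusionUparrow} applied to $\Nolt{\mu} \subseteq \Nolt{\mu'}$. For strictness, my plan is to exhibit an explicit witness $a \in \SRF{\lambda}{\Nolt{\mu'}^{\uparrow\lambda}} \setminus \SRF{\lambda}{\Nolt{\mu}^{\uparrow\lambda}}$ by means of a ``complexity'' (length) invariant carried inductively through the construction of $\Gamma^{\uparrow\lambda}$ for $\Gamma = \Nolt{\mu}$.

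Concretely, I would attach to each $x \in \Gamma_\beta$ a complexity measure $c(x)$ --- for instance, its sign-sequence length together with the sign-sequence lengths of the exponents appearing in its Conway normal form --- and prove by transfinite induction on $\beta < \gamma_\lambda$ that every $x \in \Gamma_\beta$, and more generally every $x \in \SRF{\lambda}{\Gamma_\beta}$, satisfies $c(x) < B_\beta(\mu)$ for an explicit ordinal bound $B_\beta(\mu)$. The inductive step uses Gonshor's estimates for the length of $\omega^x = g(x)$ and of the logarithm via $h$, together with the restriction at stage $\beta$ to Hahn series of length strictly less than $e_\beta$. Taking the supremum over $\beta < \gamma_\lambda$ produces a uniform bound $B_\mu$ that holds for every element of $\SRF{\lambda}{\Nolt{\mu}^{\uparrow\lambda}}$. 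The witness $a$ is then any surreal in $\Nolt{\mu'}$ whose complexity exceeds $B_\mu$ --- the simplest candidate being an ordinal chosen in the interval $[B_\mu, \mu')$, which of course requires first that $B_\mu < \mu'$.

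The main obstacle, and the technical heart of the proof, is precisely the estimate $B_\mu < \mu'$: because $\lambda$ is an $\epsilon$-number, naively iterating $g$ over elements of $\Nolt{\mu}$ through the $\gamma_\lambda$ stages of the construction could in principle climb arbitrarily high within $\Nolt{\lambda}$, and indeed a one-term argument is insufficient since the obvious candidate $\mu = \omega^{\omega^\xi} \in \Nolt{\mu'}$ already lies in the smaller field (it is the Hahn monomial with exponent $\omega^\xi \in \Nolt{\mu}$). The delicate point is that the definition of $\Gamma_{\beta+1}$ only closes under Hahn series of length $<e_\beta$ (not $<\lambda$) and only over the positive part $(\Gamma_\beta)^*_+$; exploiting this, together with the hypotheses $\omega < \mu < \mu'$ and that both are multiplicative, is what pins $B_\mu$ strictly below $\mu'$ and yields the required strict inclusion.
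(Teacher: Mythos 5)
Your inclusion argument is fine, but the strictness plan rests on a bound that does not exist. You want a uniform bound $B_\mu<\mu'$ on the (sign-sequence and exponent) lengths of all elements of $\SRF{\lambda}{{\Nolt{\mu}}^{\uparrow\lambda}}$; no such bound can hold. Already at the first successor stage, $\Gamma_1\supseteq\SRF{e_0}{g\pa{(\Nolt\mu)^*_+}}$ contains the monomials $\omega^{g(a)}$ for all ordinals $a<\mu$, whose lengths are cofinal in $\omega^\mu$, and iterating through the stages the ordinals (hence lengths) occurring among the exponents climb at least to the first $\epsilon$-number above $\mu$; the paper's own Proposition \ref{prop:stableExpLnContenuNoLambda} only confines $\Gamma_i$ to $\Nolt{e_{k\oplus 2\otimes i}}$, and these bounds tend to $\lambda$, not to anything below $\mu'$. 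Moreover the intended witness, ``an ordinal in $[B_\mu,\mu')$,'' cannot separate the fields even in principle: for instance every ordinal $\alpha<\min(\mu',\omega^\mu)$ has Cantor normal form with exponents $<\mu$, hence lies already in $\SRF\lambda{\Nolt\mu}\subseteq\SRF{\lambda}{{\Nolt{\mu}}^{\uparrow\lambda}}$, and further ordinals enter at later stages. So length (of an element or of its exponents) is simply not an invariant that distinguishes the two fields, and since you yourself flag the estimate $B_\mu<\mu'$ as an unresolved ``main obstacle,'' the proposal does not contain a proof of strictness.

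The paper separates the fields by a different, non-quantitative invariant. The witness is the log-atomic number $x=\omega^{\omega^{-\mu}}$ (of quite modest length, smaller than that of many elements of the smaller field): Lemma \ref{lem:LogAtomicDesGammai} and Corollary \ref{cor:LogAtomicDeGammaUp} show that the passage from $\Gamma$ to $\Gamma^{\uparrow\lambda}$ creates no new log-atomic numbers occurring along paths, so if $x$ were in $\SRF{\lambda}{{\Nolt{\mu}}^{\uparrow\lambda}}$ there would be a path $P$ of some element of $\SRF\lambda{\Nolt\mu}$ with $P(k)=\ln_n x$; a backwards induction along the path, using Corollary \ref{cor:lengthOmegaGA} (a \emph{lower} bound on $\length{\omega^{g(a)}}$) and Lemma \ref{lem:lengthTerm}, forces the exponent of $P(0)$ to have length at least $\mu$, contradicting $P(0)\in\Rbb\,\omega^{\Nolt\mu}$. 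Note that length estimates are used there only locally, along a single path and as lower bounds propagated backwards, which is compatible with the fact that global length bounds on the field fail. If you want to salvage your approach, you would need to replace ``length'' by an invariant of this kind (e.g.\ the log-atomic numbers reachable through paths), not by any ordinal-valued complexity bounded below $\mu'$.
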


This article is organized as follows. Section \ref{sec:introsurreals} recalls basics of the concepts and definitions of the theory of surreal numbers, and fixes the notations used in the rest of the paper. Section \ref{sec:erhlichandco} recalls what is known about the stability properties of various subfields of $\Nobf$ according to their signs sequence representation or Hahn series representation.  In Section \ref{sec:expoLn} we recall the definitions of exponential and logarithm, and based on existing literature, we establish some various bounds and statements on corresponding functions $g$ and $h$, needed for the rest of the article. In particular, this provides a  proof of Theorem \ref{thm:NoltStableExpLn} and of  Proposition \ref{prop:instable} that we give at the end of Section \ref{sec:expoLn}. 
Section \ref{sec:proofs} is devoted to prove Theorems \ref{thm:SRFGammaUpStableExpLn} and \ref{thm:NolambdaDecompCorpsStables} and the strictness of the hierarchy (Theorem \ref{thm:hierarchieUparrow}).

\section{Surreal numbers}
\label{sec:introsurreals}


We assume some familiarity with the ordered field of surreal numbers (refer to  \cite{conway2000numbers,gonshor1986introduction} for presentations) which we denote by $\No$. In this section we give a brief presentation of the basic definitions and results, and we fix the notations that will be used in the rest of the paper.

\subsection{Order and simplicity}

The class $\No$ of surreal numbers can be defined either by transfinite recursion, as in  \cite{conway2000numbers} or by transfinite length sequences of $+$ and $-$ as done in 
\cite{gonshor1986introduction}. We  will mostly follow \cite{gonshor1986introduction}, as well as \cite{berarducci2018surreal} for their presentation.

We introduce the class $\No = 2^{<\On}$ of all binary sequences of some ordinal length $\alpha \in \On$, where $\On$ denotes the class of the ordinals. In other words,  $\No$ corresponds to functions of the form $x : \alpha \to \{-,+\}$. The \textbf{length} (sometimes also called \textbf{birthday} in  literature) of a surreal number $x$ is the ordinal number $\alpha = \dom(x)$. We will also write $\alpha=\length{x}$ (the point of this notation is to ``count'' the number of pluses and minuses).
Note that $\No$ is not a set but a proper class, and all the relations and functions we shall define on $\No$ are going to be class-relations and class-functions, usually constructed by transfinite induction.

We say that $x \in \No$ is \textbf{simpler} than $y \in \No$, denoted $x \simpler y$, i.e., if $x$ is a strict \textbf{initial segment} (also called \textbf{prefix}) of $y$ as a binary sequence. We say that $x$ is simpler than or equal to $y$, written $x \simplereq y$, if $x \simpler y$ or $x = y$ i.e., $x$ is an initial segment of $y$. The simplicity relation is a binary tree-like partial order on $\No$, with the immediate successors of a node $x\in\No$ being the sequences $x_-$ and $x_+$ obtained by appending $-$ or $+$ at the end of the signs sequence of $x$. Observe in particular that the simplicity relation $\simpler $ is well-founded, and the empty sequence, which will play the role of the number zero, is simpler than any other surreal number. 

We can introduce a total order $<$ on $\No$ which is basically the lexicographic order over the corresponding sequences: More precisely,  we consider the order  $-<\square<+$ where $\square$ is the blank symbol. Now to compare two signs sequences,  append blank symbols to the shortest so that they have the same length. Then,  just compare them with the corresponding lexicographic order to get the total order $<$.

Given two sets $A \subseteq \No$ and $B \subseteq \No$ with $A < B$ (meaning that $a < b$ for all $a \in A$ and $b \in B$),
it is quite easy to understand why there is a simplest surreal number, denoted $\crotq AB$ such that $A<\crotq AB < B$.  However, a formal proof is long. See \cite[Theorem 2.1]{gonshor1986introduction} for details.
If $x=\crotq AB$, we say that Such a pair $\crotq{A}{B}$ is \textbf{representation} of $x$.

Every surreal number $x$ has several different representations $x = \crotq{A}{ B} = \crotq{A'}{ B'}$, for instance, if $A$ is cofinal with $A'$ and $B$ is coinitial with $B'$. In this situation, we shall say that $\crotq{A}{ B} = \crotq{A'}{ B'}$ by cofinality. On the other hand, as discussed in \cite{berarducci2018surreal}, 
it may well happen that $\crotq{A}{ B} = \crotq{A'}{ B'}$ even if $A$ is not cofinal with $A'$ or $B$ is not coinitial with $B'$. The \textbf{canonical representation} $x = \crotq{A}{ B}$ is the unique one such that $A \cup B$ is exactly the set of all surreal numbers strictly simpler than $x$. Indeed it turns out that is $A=\enstq{y\sqsubset x}{y<x}$ and $B=\enstq{y\sqsubset x}{y>x}$, then $x=\crotq AB$.

\begin{Remark} 
By definition, if $x = \crotq{A}{ B}$ and $A < y < B$, then $x \simplereq y$.
\end{Remark}
To make the reading easier we may forget $\{\}$ when writing explicitly $A$ and $B$. For instance $\crotq xy$ will often stand for $\crotq{\{x\}}{\{y\}}$ when $x,y\in\No$.


\subsection{Field operations}


Ring operations $+$, $·$ on $\No$ are defined by transfinite induction on simplicity as follows:
$$x+y:=\crotq{x' +y, x+y'}{x'' +y, x+y''}$$
$$
xy := \crotq{x'y + xy' - x'y', x''y + xy'' - x''y''} {x'y + xy'' - x'y'', x''y + xy' - x''y'}
$$
where $x'$ (resp. $y'$) ranges over the numbers simpler than $x$ (resp. $y$) such that $x' < x$ (resp. $y'<y$) and $x''$ (resp. $y''$) ranges over the numbers simpler than $x$ (resp. $y$) such that $x < x''$ (resp. $y<y''$); in other words, when $x = \crotq{x'}{x''}$ and $y = \crotq{y'} {y''}$ are the canonical representations of $x$ and $y$ respectively. The expression for the product may seem not intuitive, but actually, it is basically inspired by the fact that we expect $(x-x')(y-y')>0$, $(x-x'')(y-y'')>0$, $(x-x')(y-y'')<0$ and $(x-x'')(y-y')<0$.

\begin{Remark}
The definitions of sum and product are uniform in the sense of \cite[page 15]{gonshor1986introduction}. Namely the equations that define $x + y$ and $xy$ does not require the canonical representations of $x$ and $y$ but any representation. In particular, if $x=\crotq AB$ and $y=\crotq CD$, the variables $x', x'', y', y''$ may range over $A$, $B$, $C$, $D$ respectively.
\end{Remark}

It is an early result that these operations, together with the order, give $\No$ a structure of ordered field, and even a structure of real closed field (see \cite[Theorem 5.10]{gonshor1986introduction}). 
Consequently, there is a unique embedding of the rational numbers in $\No$ so we can
identify $\Qbb$ with a subfield of $\No$. 
Actually, the subgroup of the dyadic rationals $m/2^{n}\in \Qbb$, 
with $m\in\Zbb$ and $n \in \Nbb$, correspond exactly to the surreal numbers $s : k \to \{-, +\}$ of finite length $k \in \Nbb.$

The field $\R$ can be isomorphically identified with a subfield of $\No$ by sending $x\in\R$ to the number $\crotq{A}{ B}$ where $A\subseteq\No$ is the set of rationals (equivalently: dyadics) lower than $x$ and $B\subseteq\No$ is the set of (equivalently: dyadics) greater than $x$. This embedding is consistent with the one of $\Qbb$ into $\No$. We may thus write $\Qbb\subseteq\R\subseteq\No$. By \cite[page 33]{gonshor1986introduction}, the length of a real number is at most $\omega$ (the least infinite ordinal). There are however surreal numbers of length $\omega$ which are not real numbers, such as $\omega$ itself or its inverse that is a positive infinitesimal.  

The ordinal numbers can be identified with a subclass of $\No$ by sending the ordinal $\alpha$ to the sequence $s : \alpha \rightarrow\{+,-\}$ with constant value $+$. Under this identification, the ring operations of $\No$, when restricted to the ordinals $\Ord \subseteq \No$, coincide with the Hessenberg sum and product (also called natural operations) of ordinal numbers. Similarly, the sequence $s : \alpha\rightarrow\{+,-\}$ with constant value $-$ corresponds to the opposite (inverse for the additive law) of the ordinal $\alpha$, namely $-\alpha$. We remark that $x \in \Ord$ if and only if $x$ admits a representation of the form $x = \crotq AB$ with $B=\emptyset$, and similarly $x \in -\Ord$ if and only if we can write $x = \crotq AB$ with $A=\emptyset$.

Under the above identification of $\Qbb$ as a subfield of $\No$, the natural numbers $\Nbb \subseteq\Qbb$ are exactly the finite ordinals.

\subsection{Hahn series}
\label{sec:hahn}

%
%
%
%
\subsubsection{Generalities}
Let $\K$ be a field, and let $G$ be a divisible ordered Abelian group.

\begin{Definition}[Hahn series \cite{hahn1995nichtarchimedischen}]
The Hahn series (obtained from $\K$ and $G$) are formal power series of the form $s=\sum_{g \in S} a_{g} t^{g}$, where $S$ is a well-ordered subset of $G$ and $a_{g} \in \K .$ The support of s is $\supp(s)=\enstq{g \in S}{ a_{g} \neq 0}$ and the length of $s$ is the order type of $\supp(s)$.

We write $\HahnField{\K}{G}$ for the set of Hahn series with coefficients in $K$ and terms corresponding to elements of $G$. 
\end{Definition}

\begin{Definition}[Operations on $\HahnField{\K}{G}$]
The operations on $\HahnField{K}{G}$ are defined in the natural way:
 Let $s=\sum_{g \in S} a_{g} t^{g}, s^{\prime}=\sum_{g \in S^{\prime}} a_{g}^{\prime} t^{g}$, where $S, S^{\prime}$ are well
ordered.
\begin{itemize}
\item $s+s^{\prime}=\sum_{g \in S \cup S^{\prime}}\left(a_{g}+a_{g}^{\prime}\right) t^{g}$, where $a_{g}=0$ if $g \notin S$, and $a_{g}^{\prime}=0$ if $g \notin S^{\prime}$.
\item $s \cdot s^{\prime}=\sum_{g \in T} b_{g} t^{g}$, where $T=\enstq{g_{1}+g_{2}}{g_{1} \in S \wedge g_{2} \in S^{\prime}}$, and for
each $g \in T$, we set
$b_{g}=\Sum{g_{1}\in S, g_{2}\in S' | g_1+g_2=g}{} b_{g_{1}} \cdot b_{g_{2}}$
\end{itemize}
\end{Definition}

Hahn fields inherits a lot of from the structure of the coefficient field. In particular if $\K$ is algebraically closed, and if  $G$ is some divisible (i.e. for any $n\in\Nbb$ and $g\in G$ there is some $g'\in G$ such that $ng'=g$) ordered Abelian group, then the corresponding Hahn field is also algebraically closed. More precisely: 

\begin{Theorem}[Generalized Newton-Puiseux Theorem, Maclane \cite{maclane1939}]
	\label{thm:macLane}
	Let $G$ be a divisible ordered Abelian group, and let $\K$ be a field that is algebraically closed of characteristic $0$. Then $\HahnField{\K}{G}$ is also algebraically closed.
\end{Theorem}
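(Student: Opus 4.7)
The plan is to prove Theorem \ref{thm:macLane} via the classical Newton polygon method combined with a transfinite construction. Equip $\HahnField{\K}{G}$ with the natural valuation $v(s)=\min\supp(s)$ (with $v(0)=\infty$), which has value group $G$ and residue field $\K$. Given a non-constant monic polynomial $P(X)=X^n+a_{n-1}X^{n-1}+\dots+a_0\in\HahnField{\K}{G}[X]$ with $a_0\neq 0$ (the case $a_0=0$ providing the trivial root $X=0$), the goal is to construct a root $y=\sum_{\alpha<\beta} c_\alpha t^{g_\alpha}\in\HahnField{\K}{G}$ by transfinite recursion, pinning down one more term at each successor stage.

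The core algebraic step is the Newton polygon reduction. Plot the points $(i,v(a_i))$ for $0\le i\le n$ (with $v(a_n)=0$), take their lower convex hull, and consider any edge of negative slope $-\gamma$ with $\gamma\in G$; this $\gamma$ is a candidate valuation for a root. The coefficients of $P$ lying on this edge, suitably rescaled by a power of $t$ and reduced modulo the maximal ideal of the valuation ring, give a non-trivial polynomial $\bar P(Y)\in\K[Y]$ of degree $\geq 1$. Since $\K$ is algebraically closed of characteristic $0$, $\bar P$ has a nonzero root $c\in\K^*$. Substituting $X=c\, t^{\gamma}+X_1$ into $P$ yields a new polynomial $P_1\in\HahnField{\K}{G}[X_1]$ of the same degree $n$ whose constant term has strictly greater valuation. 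Iterating produces strictly increasing $g_0<g_1<\dots$ in $G$ together with $c_0,c_1,\dots\in\K^*$, so that each partial sum $y_\alpha=\sum_{\beta<\alpha}c_\beta t^{g_\beta}$ is a successively better approximate root.

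At limit ordinals $\lambda$ we define $y_\lambda$ by summing all previously obtained terms, provided the set $\{g_\beta : \beta<\lambda\}$ is well-ordered in $G$. The recursion halts when at some stage the polynomial $P_\alpha$ acquires constant term $0$; at that point $y=\sum_{\alpha<\beta}c_\alpha t^{g_\alpha}$ is an exact root of the original $P$.

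The main obstacle is twofold: justifying well-orderedness of the support $\{g_\alpha\}$, so that $y$ is a legitimate element of $\HahnField{\K}{G}$, and showing that the procedure actually terminates with a genuine root. Well-orderedness follows because each $g_\alpha$ is built from elements of the well-ordered supports of the original coefficients $a_i$ via a finite Newton-polygon combinatorics; a Neumann-type closure lemma on well-ordered subsets of a totally ordered Abelian group ensures that only well-ordered sets of exponents can be generated, even through transfinite stages. Termination is more delicate: one must track how the Newton polygon of $P_\alpha$ evolves, showing in particular that either the number of edges with negative slope strictly decreases at some stage, or a coefficient-cancellation occurs forcing exact vanishing. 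This is the technical heart of Maclane's original argument, and its cleanest modern incarnation goes through a valuation-theoretic pseudo-Cauchy sequence argument à la Kaplansky: the sequence $(y_\alpha)$ is shown to be pseudo-Cauchy of algebraic type, and maximality of $\HahnField{\K}{G}$ as a valued field (spherical completeness of its natural valuation) forces the existence of a pseudo-limit which, by construction, is a root of $P$.
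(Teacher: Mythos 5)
The paper does not actually prove this statement: it is imported verbatim from MacLane \cite{maclane1939} (see also \cite[Section 6.23]{alling1987foundations}) and used only as a black box to deduce the real-closed corollary. So there is no in-paper argument to compare yours against; what you propose is essentially the classical Newton polygon / valuation-theoretic route, which is the standard way to prove it.

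As a proof, however, your write-up stops short exactly at the point you yourself call the technical heart, and the one concrete mechanism you invoke there is not quite right as stated. Spherical completeness of $\HahnField{\K}{G}$ guarantees a pseudo-limit for every pseudo-Cauchy sequence, but a pseudo-limit of $(y_\alpha)$ is not ``by construction'' a root of $P$: an arbitrary pseudo-limit only satisfies $v(P(y))\geq \sup_\alpha v(P(y_\alpha))$-type estimates, and concluding $P(y)=0$ needs Kaplansky's analysis of pseudo-Cauchy sequences of \emph{algebraic type}. The correct argument runs: take $P$ of minimal degree among polynomials without a root (so the sequence is of algebraic type with $P$ realizing the minimal degree), use Kaplansky's theorem to build an \emph{immediate extension} of $\HahnField{\K}{G}$ containing a pseudo-limit that is a root of $P$, and then invoke maximality of the Hahn field (no proper immediate extensions) -- here divisibility of $G$, algebraic closedness of $\K$ and residue characteristic $0$ are what make the immediate-extension bookkeeping work. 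Alternatively, one must genuinely carry out the Newton-polygon termination analysis, bounding the length of the transfinite recursion by the order type of the (Neumann-closed) set of available exponents. Your appeal to a Neumann-type lemma for well-orderedness of the support is the right ingredient, but it too is asserted rather than proved: you need to exhibit a fixed well-ordered (or finitely generated, in Neumann's sense) set of exponents, built from the supports of the $a_i$ and divisions by integers at most $n$, inside which all the $g_\alpha$ are confined, including through limit stages. Until these two steps are filled in, the proposal is a correct plan rather than a proof.
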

As noticed in \cite{alling1987foundations}, we can deduce the following:
\begin{Corollary}
	Let $G$ be a divisible ordered Abelian group, and let $\K$ be a field that is real closed of characteristic $0$. Then $\HahnField{\K}{G}$ is also real closed.
\end{Corollary}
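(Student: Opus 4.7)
The plan is to deduce the corollary from MacLane's theorem (Theorem \ref{thm:macLane}) via the Artin--Schreier characterization of real closed fields: in characteristic $0$, a field $F$ is real closed if and only if $F$ admits a field ordering and the quadratic extension $F(\sqrt{-1})$ is algebraically closed. So I need to check two things for $F=\HahnField{\K}{G}$: that it is orderable, and that adjoining a square root of $-1$ yields an algebraically closed field.

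First I would handle the algebraic closure side. Since $\K$ is real closed of characteristic $0$, its algebraic closure is the quadratic extension $\K[i]$ with $i^{2}=-1$, which is algebraically closed of characteristic $0$. Applying Theorem \ref{thm:macLane} to $\K[i]$ and $G$ gives that $\HahnField{\K[i]}{G}$ is algebraically closed. The next step is to identify this with $\HahnField{\K}{G}[i]$: every series $s=\sum_{g\in S}(a_g+ib_g)t^g \in \HahnField{\K[i]}{G}$ splits uniquely as $s=s_1+is_2$ with $s_1=\sum_g a_g t^g$ and $s_2=\sum_g b_g t^g$ in $\HahnField{\K}{G}$ (the supports of $s_1,s_2$ are subsets of $\supp(s)$, hence well-ordered), and a direct verification on the Hahn sum/product formulas shows that this decomposition is a field isomorphism $\HahnField{\K[i]}{G} \cong \HahnField{\K}{G}[i]$.

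Next I would equip $\HahnField{\K}{G}$ with its natural ordering: for a nonzero series $s=\sum_{g\in S} a_{g}t^{g}$, set $s>0$ iff $a_{g_{0}}>0$ in $\K$, where $g_{0}=\min\supp(s)$ (which exists since $\supp(s)$ is well-ordered). One checks by examining leading terms that this is a total order, closed under addition and multiplication of positives, and that it extends the order on $\K$; this is where the divisibility and ordering of $G$ genuinely enter, and it is the only calculation of any substance in the argument.

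Combining the two ingredients, $\HahnField{\K}{G}$ is an ordered (in particular formally real) field of characteristic $0$ whose extension by $\sqrt{-1}$ is algebraically closed, so Artin--Schreier yields that $\HahnField{\K}{G}$ is real closed. The only step that is not pure bookkeeping is verifying compatibility of the leading-term order with multiplication, but this reduces to the observation that $\min\supp(ss')=\min\supp(s)+\min\supp(s')$ and the corresponding leading coefficient is $a_{g_{0}}a'_{g'_{0}}$, which is positive whenever both factors are.
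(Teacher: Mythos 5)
Your argument is correct and follows essentially the same route as the paper: apply MacLane's theorem to $\K[i]$, identify $\HahnField{\K[i]}{G}$ with $\HahnField{\K}{G}[i]$, and conclude real closedness via the Artin--Schreier characterization. The only difference is cosmetic: you verify orderability via the leading-coefficient order, whereas the paper just notes that $-1$ is not a square in $\K((G))$ -- both suffice for the same conclusion.
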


\begin{proof}
	$\K$ is real closed. That is to say that $-1$ is not a square in $\K$ and that $\K[i]$ is algebraically closed. Notice that $\K[i]((G))=\left(\K((G))\right)[i]$. Therefore, Theorem \ref{thm:macLane} ensures that $\left(\K((G))\right)[i]$ is algebraically closed. Also, $-1$ is not a square in $\K((G))$. Therefore, $\K((G))$ is real closed. 
\end{proof}


\subsubsection{Restricting length of ordinals}


In this article, will often restrict the class of ordinals allowed in the ordinal sum, namely by restricting to ordinals up to some ordinal $\lambda$. We then give the following notation:
\begin{Definition}[$\HahnFieldOrd{\K}{G}{\gamma}$]
Let $\lambda$ be some ordinal.  We define $\HahnFieldOrd{\K}{G}{\gamma}$ for the restriction of $\HahnField{K}{G}$ to formal power series whose support has an order type in~$\gamma$ (that is to say, corresponds to some ordinal less than $\gamma$).
\end{Definition}


%

\begin{Theorem}
Assume $\gamma$ is some $\epsilon$-number. Then $\HahnFieldOrd{\K}{G}{\gamma}$ is a field.
\end{Theorem}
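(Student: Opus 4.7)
The plan is to check that $\HahnFieldOrd{\K}{G}{\gamma}$ is closed under the field operations of the ambient Hahn field $\HahnField{\K}{G}$; the field axioms then transfer automatically. So I only need to verify that if $s,s'\in\HahnFieldOrd{\K}{G}{\gamma}$, then $s+s'$, $s\cdot s'$, and (for $s\neq 0$) $s^{-1}$ all have length $<\gamma$.

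For addition and multiplication, the key tools are the standard bounds on well-ordered subsets of an ordered Abelian group: if $A,B$ have order types $\alpha,\beta$, then $A\cup B$ has order type at most the Hessenberg sum $\alpha\oplus\beta$, and the Minkowski sum $A+B$ has order type at most the Hessenberg product $\alpha\otimes\beta$. Since $\gamma=\omega^\gamma$ is an $\epsilon$-number, it is in particular closed under both natural operations: $\alpha,\beta<\gamma$ implies $\alpha\oplus\beta<\gamma$ and $\alpha\otimes\beta<\gamma$. Applying the bounds to $\supp(s+s')\subseteq\supp(s)\cup\supp(s')$ and to $\supp(s\cdot s')\subseteq\supp(s)+\supp(s')$ settles these two cases.

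The main obstacle is the multiplicative inverse. Given nonzero $s$, I would set $g_0=\min\supp(s)$ and factor $s=a_{g_0}t^{g_0}(1-\eta)$ where $S:=\supp(\eta)\subseteq G_{>0}$ has some order type $\alpha<\gamma$. The geometric series
\[ s^{-1} \;=\; a_{g_0}^{-1}\,t^{-g_0}\sum_{n\in\Nbb}\eta^n \]
is valid in $\HahnField{\K}{G}$, so what remains is to show that its support, which sits in the semigroup $\langle S\rangle=\bigcup_n nS$, has order type $<\gamma$. By Neumann's classical lemma, $\langle S\rangle$ is well-ordered, and iterating the sumset bound gives $|nS|\le\alpha^{\otimes n}$, whence the order type of $\langle S\rangle$ is controlled by $\sup_n\alpha^{\otimes n}$.

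This is the step where the full $\epsilon$-number hypothesis is needed, rather than mere multiplicativity of $\gamma$. Choosing $\beta<\gamma$ with $\alpha<\omega^\beta$ (possible because $\omega^\gamma=\gamma$ is a limit of such ordinals), a routine Cantor-normal-form calculation using only the closure properties $\alpha'<\gamma\Rightarrow \alpha'\cdot\omega<\gamma$ and $\alpha'<\gamma\Rightarrow\omega^{\alpha'}<\gamma$ yields $\sup_n\alpha^{\otimes n}<\gamma$. This places $s^{-1}$ in $\HahnFieldOrd{\K}{G}{\gamma}$ and closes the proof.
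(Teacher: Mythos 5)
Your overall strategy is the same as the paper's: closure under $+$ and $\cdot$ is routine via the natural sum/product bounds on unions and sumsets of supports, and the whole weight of the statement lies in the inverse, which you reduce, exactly as the paper does, to bounding the order type of the monoid $\langle S\rangle$ generated by the well-ordered set $S\subseteq G_{>0}$ of order type $\alpha<\gamma$ coming from the geometric-series expansion. The addition and multiplication parts of your argument are fine.

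The inverse step, however, contains a genuine gap: you claim that the order type of $\langle S\rangle=\bigcup_n nS$ is ``controlled by $\sup_n \alpha^{\otimes n}$''. Order types do not behave this way under infinite unions, even increasing ones: already for $S=\{1\}$ in $\Zbb$ one has $\alpha=1$, each $nS=\{n\}$ a singleton, so $\sup_n\alpha^{\otimes n}=1$, while $\langle S\rangle$ has order type $\omega$. So the bound you want cannot be obtained by taking the supremum of the bounds for the individual sumsets $nS$; Neumann's lemma gives well-orderedness of $\langle S\rangle$ but by itself no ordinal bound, and some additional argument is indispensable. This is precisely what the paper supplies: it invokes Weiermann's result (Proposition \ref{prop:orderTypeMonoid}), which bounds the order type of $\langle S\rangle$ by $\omega^{\hat\alpha}\leq\omega^{\omega\alpha}$. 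With that bound in hand, your concluding use of the $\epsilon$-number hypothesis goes through as intended: $\alpha<\gamma$ gives $\omega\alpha<\gamma$ and hence $\omega^{\omega\alpha}<\omega^{\gamma}=\gamma$, so $\supp(s^{-1})$ has order type $<\gamma$ and $s^{-1}\in\HahnFieldOrd{\K}{G}{\gamma}$. So your proof is repaired by replacing the supremum argument with an appeal to (or a proof of) such a monoid order-type bound, but as written the key quantitative step is unjustified.
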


\begin{proof}
This basically relies on the observation that the length of the inverse of some Hahn series in this field remains in the field: This is basically a consequence of Proposition \ref{prop:orderTypeMonoid} below:
\end{proof}

\begin{Proposition}[{\cite[Corollary 1]{weiermannMaximalOrderType}}]
	\label{prop:orderTypeMonoid}
	Let $\Gamma$ be an ordered abelian group and $S\subseteq\Gamma_+$ be a well-ordered subset with order type $\alpha$. Then, $\inner S$, the monoid generated by $S$ in $\Gamma$ is itself well-ordered with order type at most $\omega^{\hat{\alpha}}$
	where, if $\alpha$ is in Cantor normal form
	$$\alpha=\Sum{i=1}{n}\omega^{\alpha_i}n_i$$
	then
	$$\hat\alpha = \Sum{i=1}{n}\omega^{\alpha_i'}n_i$$
	where $\beta'=\begin{accolade}
			\beta+1 & \text{if $\beta$ is an $\epsilon$-number}\\
			\beta & \text{otherwise}
		\end{accolade}$.
	In particular, $\inner S$ has order type at most $\omega^{\omega\alpha}$ (commutative multiplication over ordinals).
\end{Proposition}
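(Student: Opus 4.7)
The plan is to proceed by transfinite induction on $\alpha = \mathrm{ot}(S)$ along its Cantor normal form, using as a key lemma that in an ordered abelian group the sum of two well-ordered subsets of $\Gamma_+$ is itself well-ordered.

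First I would prove a Sum Lemma: if $A, B \subseteq \Gamma_+$ are well-ordered of order types $\alpha_A$ and $\alpha_B$, then $A + B = \{a+b : a \in A, b \in B\}$ is well-ordered of order type at most the natural (Hessenberg) product $\alpha_A \otimes \alpha_B$. Well-foundedness follows from a standard Nash--Williams minimal-bad-sequence argument using the well-ordering of both factors, and the order-type bound follows from a careful interleaving argument comparing the descending pairs. Iterating gives a bound on the $n$-fold sumset $nS = S + \cdots + S$.

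Next I would handle $\langle S\rangle = \bigcup_{n\geq 0} nS$ by decomposing $S$ along its Cantor normal form $\alpha = \omega^{\alpha_1} n_1 + \cdots + \omega^{\alpha_k} n_k$ into convex pieces $S_1 < S_2 < \cdots < S_k$ with $\mathrm{ot}(S_i) = \omega^{\alpha_i} n_i$. Any element of $\langle S\rangle$ can be written as $\sigma_1 + \cdots + \sigma_k$ with $\sigma_i \in \langle S_i\rangle$, and since $S_i < S_{i+1}$ pointwise in $\Gamma$, the contribution of $\sigma_k$ dominates the earlier ones in an order-theoretically controllable way. This reduces the induction to the homogeneous case $\mathrm{ot}(T) = \omega^\beta n$, where a direct computation combined with the inductive hypothesis on $\beta$ yields the desired bound, then summed back along the Cantor normal form via natural addition.

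The main obstacle, and the very reason for the hat operation, lies in the treatment of $\epsilon$-numbers. For $\beta$ an $\epsilon$-number one has $\omega^\beta = \beta$, so the inductive hypothesis applied at exponent $\beta$ yields only a bound of size $\beta$, which cannot absorb the extra multiplicity factor $n$ arising from a summand $\omega^\beta n$ in the Cantor normal form decomposition; the exponent must therefore be bumped to $\beta+1$, which is precisely the definition of $\hat\beta$ at such points. Away from $\epsilon$-numbers the inequality $\omega^\beta > \beta$ already suffices and $\hat\beta = \beta$. The final ``in particular'' assertion $\mathrm{ot}(\langle S\rangle) \leq \omega^{\omega\alpha}$ is then a routine pointwise check that $\hat\alpha \leq \omega \otimes \alpha$ on every Cantor normal form. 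For the sharp constants I would follow Weiermann's original argument rather than re-derive them from scratch.
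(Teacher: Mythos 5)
First, note that the paper does not prove this proposition at all: it is imported verbatim as Corollary 1 of Weiermann's paper on maximal order types, and is used as a black box (e.g.\ in Proposition \ref{prop:UnionSRFStableExpLn}). So there is no internal argument to compare against; your sketch has to stand on its own, and as it stands it has real gaps exactly where the content of the statement lies. The Carruth-type sumset lemma is fine (indeed the sharp form is $\mathrm{ot}(A+B)\leq \mathrm{ot}(A)\oplus \mathrm{ot}(B)$, the natural \emph{sum}; your natural-product bound is valid but weaker), and it controls each finite sumset $nS$, giving $\mathrm{ot}(nS)\leq \alpha\otimes n$. But the monoid is the union $\bigcup_n nS$, and these sets interleave unboundedly in $\Gamma$; a union of well-ordered sets of bounded type need not have type anywhere near the supremum of the bounds, and no iteration of the sumset lemma bounds it. Bounding this union — equivalently, handling the additively indecomposable case $\mathrm{ot}(S)=\omega^\beta$ — is precisely the step you dispatch with ``a direct computation combined with the inductive hypothesis,'' and it is where Weiermann's actual argument does its work (via maximal order types of well-partial-orders in the style of de Jongh--Parikh: the monoid is a monotone image of the finite multisets over $S$, whose maximal linearization is $\omega^{\hat\alpha}$). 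Even the first nontrivial instance $\alpha=\omega$, bound $\omega^{\omega}$, already requires an idea beyond sumsets, e.g.\ that an element of $\langle S\rangle$ below the $n$-th generator only involves the first $n$ generators, plus a separate treatment of the elements above all generators.

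Two further points. Your claim that, after splitting $S$ into convex blocks $S_1<\dots<S_k$ along the Cantor normal form, ``the contribution of $\sigma_k$ dominates the earlier ones'' is false: the monoid generated by a small block can be cofinal above later blocks (take $S_1=\{1\}$ in $\mathbb{R}$, so $\langle S_1\rangle=\mathbb{N}$ exceeds any fixed generator from $S_2$). No domination is available or needed; the correct reduction is simply $\langle S\rangle\subseteq\langle S_1\rangle+\dots+\langle S_k\rangle$ followed by the sumset bound on order types. Finally, your closing remark that for ``the sharp constants'' you would follow Weiermann's original argument concedes the bound $\omega^{\hat\alpha}$ itself — which, together with the $\epsilon$-number bookkeeping in $\hat\alpha$, \emph{is} the statement. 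As a proof proposal this is therefore a plausible outline of how such a bound is proved (and your heuristic for why $\epsilon$-numbers force the bump $\beta\mapsto\beta+1$ is the right intuition, as is the routine check $\hat\alpha\leq\omega\otimes\alpha$), but the central estimate is not established; citing Weiermann, as the paper does, or importing the de Jongh--Parikh multiset machinery, is the honest way to close it.
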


We also get:

\begin{Proposition}[{\cite[Lemma 4.6]{DriesEhrlich01}}] 
	\label{prop:hahnFieldRealClosed}
Assume $\K$ is some real closed field, and $G$ is some abelian divisible group.  Then $\HahnFieldOrd{\K}{G}{\gamma}$ is real closed.
\end{Proposition}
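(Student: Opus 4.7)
The plan is to leverage the already-established Corollary stating that the unrestricted Hahn field $\HahnField{\K}{G}$ is real closed, together with Proposition \ref{prop:orderTypeMonoid} bounding the order type of monoids generated by well-ordered sets. Throughout I assume that $\gamma$ is an $\epsilon$-number, as this is needed for $\HahnFieldOrd{\K}{G}{\gamma}$ to be a field at all.

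I would verify the two Artin--Schreier conditions for real closedness: (i) $-1$ is not a sum of squares in $\HahnFieldOrd{\K}{G}{\gamma}$, and (ii) every polynomial of odd degree over $\HahnFieldOrd{\K}{G}{\gamma}$ has a root there. Condition (i) is immediate from the inclusion $\HahnFieldOrd{\K}{G}{\gamma}\subseteq \HahnField{\K}{G}$ and the fact that the larger field is formally real by the Corollary.

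For (ii), fix $P\in\HahnFieldOrd{\K}{G}{\gamma}[X]$ of odd degree. The Corollary supplies a root $r\in\HahnField{\K}{G}$, and the whole question reduces to showing that $\supp(r)$ has order type strictly less than $\gamma$. I would prove this by a Newton-polygon style analysis: the valuation of $r$ (its least support element) is obtained as a rational combination of differences of support elements of the coefficients of $P$, and each further term of $r$ is produced by peeling off leading terms and recursing on a reduced equation whose coefficient supports still lie in the additive subgroup $H\subseteq G$ generated by the supports of the coefficients of $P$. Consequently, $\supp(r)$ is contained in $\inner{S}$ for some well-ordered $S\subseteq H_{+}$ obtained from the finitely many coefficient supports of $P$. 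Each such coefficient support has order type $<\gamma$, and a finite union of them again has order type $<\gamma$ because $\gamma$, being an $\epsilon$-number, is closed under ordinal addition. So $S$ has some order type $\alpha<\gamma$, and Proposition \ref{prop:orderTypeMonoid} then gives that $\inner{S}$ has order type at most $\omega^{\omega\alpha}$. Since $\gamma$ is an $\epsilon$-number, $\omega^{\omega\alpha}<\omega^{\gamma}=\gamma$, whence $\supp(r)$ has order type less than $\gamma$ and $r\in\HahnFieldOrd{\K}{G}{\gamma}$.

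The main obstacle is making the Newton-polygon recursion rigorous across a possibly transfinite iteration: one must show that the successive approximations converge to a genuine root of $P$ in $\HahnField{\K}{G}$, and that at no stage is a new exponent introduced which falls outside the monoid $\inner{S}$. This is essentially the technical content of \cite[Lemma 4.6]{DriesEhrlich01}; once it is in hand, the monoid bound of Proposition \ref{prop:orderTypeMonoid} provides the uniform cap that collapses all the bookkeeping into the single inequality $\omega^{\omega\alpha}<\gamma$.
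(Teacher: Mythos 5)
First, a framing point: the paper does not actually prove this Proposition --- it quotes \cite[Lemma 4.6]{DriesEhrlich01} and merely remarks that the proof given there uses only the real-closedness of $\R$, hence applies to any real closed $\K$. Judged as a standalone argument, your sketch has two genuine gaps. The first is the criterion you verify: being formally real together with having roots for all odd-degree polynomials does \emph{not} characterize real closed fields. For instance, a subfield $F\subseteq\R$ maximal with respect to not containing $\sqrt{2}$ is formally real, and every odd-degree polynomial over $F$ has a root in $F$ (an irreducible odd-degree $P$ has a real root $r$, and by maximality $\sqrt{2}\in F(r)$, so $2\mid[F(r):F]$, forcing $\deg P=1$), yet $F$ is not real closed because $2$ is a sum of squares with no square root. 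The Artin--Schreier characterization also demands that every element or its negative be a square, so you must additionally show that positive elements of $\HahnFieldOrd{\K}{G}{\gamma}$ have square roots there; in this setting that follows from the binomial series, whose support lies in the monoid generated by the support of the infinitesimal part, so Proposition~\ref{prop:orderTypeMonoid} together with $\gamma$ being an $\epsilon$-number closes this case --- but it has to be part of the argument.

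The second and more serious gap is that your key containment is asserted rather than proved, and as stated it is not correct: $\supp(r)$ need not lie in any monoid $\inner S$ with $S\subseteq H_{+}$, since a root can have infinite terms (already for $P(X)=X-a$ the support of the root is $\supp(a)$, which may contain negative elements of $G$), and the Newton-polygon slopes have the form $(g-g')/k$, so bounded denominators enter; what one actually needs is a statement of the shape $\supp(r)\subseteq g_0+\frac1n\inner S$ for a suitable translate and scaling --- harmless for the order-type bound, but it must be formulated and, above all, proved through the transfinite recursion, including at limit stages where convergence to a genuine root has to be checked. That bookkeeping is exactly the technical content of the result, and you explicitly defer it to \cite[Lemma 4.6]{DriesEhrlich01}, which makes the proposal circular as a proof of this Proposition. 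The surrounding frame --- reducing everything to an order-type bound on supports of roots and concluding via $\omega^{\omega\alpha}<\gamma$ for $\alpha<\gamma$ because $\gamma$ is an $\epsilon$-number --- is the right one. If you want a route that avoids the delicate root-support analysis for odd degree, prove only that $\HahnFieldOrd{\K}{G}{\gamma}$ is henselian (the Hensel--Newton iteration for a simple residual root introduces no new denominators and keeps supports in the monoid generated by the data, so Proposition~\ref{prop:orderTypeMonoid} suffices), and then invoke the standard valuation-theoretic fact that a henselian valued field of residue characteristic $0$ with real closed residue field $\K$ and divisible value group $G$ is real closed.
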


Actually, this was stated in \cite[Lemma 4.6]{DriesEhrlich01} for the case $\K=\R$, but the proof ony uses the fact that $\R$ is real-closed. 

\subsubsection{Normal form theorem for surreal numbers}

\begin{Definition}
	For $a$ and $b$ two surreal numbers, we define the following relations: 
	\begin{itemize}
		\item $a\prec b$ if for all $n\in\Nbb$, $n|a|<|b|$.
		\item $a\preceq b$ if there is some natural number $n\in\Nbb$ such that $|a|<n|b|$.
		\item $a\asymp b$ if $a\preceq b$ and $b\preceq a$.
	\end{itemize}
\end{Definition}
With this definition, $\preceq$ is a preorder and $\prec$ is the corresponding strict preorder. The associated equivalence relation is $\asymp$ and the equivalence classes are the Archimedean classes.

\begin{Theorem}[{\cite[Theorem 5.1]{gonshor1986introduction}}]
	For all surreal number $a$ there is a unique positive surreal $x$ of minimal length such that $a\asymp x$.
\end{Theorem}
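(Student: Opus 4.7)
The plan is to combine the simplicity theorem with well-foundedness of $\Ord$ via a classical sandwich argument.

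First I would set aside the trivial edge case $a=0$: the definition of $\asymp$ forces $y \asymp 0 \Longleftrightarrow y=0$, so no positive $y$ is $\asymp 0$ and the statement is understood for $a \neq 0$. In that case $|a|>0$ and $|a|\asymp a$, so the class $C=\enstq{y\in\No}{y>0 \text{ and } y\asymp a}$ is nonempty. For existence, I would invoke well-foundedness of the ordinals: $\enstq{\length{y}}{y \in C}$ is a nonempty class of ordinals, hence has a least element $\lambda$, and any $x \in C$ with $\length{x}=\lambda$ witnesses the existence claim.

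For uniqueness, suppose $x_1 \neq x_2$ were both in $C$ with $\length{x_1}=\length{x_2}=\lambda$, say $x_1<x_2$. Since they share the same length, neither is a strict prefix of the other, so the simplicity theorem produces $z := \crotq{x_1}{x_2}$ with $x_1 < z < x_2$ and $z$ strictly simpler than both $x_1$ and $x_2$; in particular $\length{z}<\lambda$. It then remains to show $z \in C$: positivity is immediate from $z > x_1 > 0$, and picking $n,m \in \Nbb$ with $|a|/n < x_1$ and $x_2 < m|a|$ (which exist because $x_1,x_2 \asymp a$) yields $|a|/n < z < m|a|$, i.e. $z \asymp a$. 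This contradicts minimality of $\lambda$, so $x_1=x_2$.

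The main obstacle is really just the bookkeeping around the simplicity theorem: one must ensure that the witness $z$ produced is strictly simpler than both endpoints, which crucially uses that $x_1$ and $x_2$ share the same length so that neither is a prefix of the other, and that the sandwich between two Archimedean-equivalent positive surreals preserves both positivity and membership in the Archimedean class of $a$. Both observations are transparent but worth isolating as explicit steps in a fully rigorous write-up.
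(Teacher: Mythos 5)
Your proof is correct, and it is essentially the standard argument behind the cited result (the paper itself gives no proof, deferring to Gonshor): the positive elements $\asymp a$ form a convex class, and convexity plus the simplicity theorem (via the common-prefix observation, which is exactly the point where equal lengths are used) forces a unique member of minimal length. The only cosmetic quibble is the edge case: under the paper's literal definition of $\preceq$ even $0\asymp 0$ fails, but your conclusion that no positive $y$ satisfies $y\asymp 0$ is what matters, so the restriction to $a\neq 0$ is handled correctly.
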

The unique element of minimal length in its Archimedean class has many properties  similar to those of exponentiation:

\begin{Definition}
	For all surreal number $a$ written in canonical representation $a=\crotq{a'}{a''}$, we define
	$$
		\omega^a=\crotq{0,\enstq{n\omega^{a'}}{n\in\Nbb}}{\enstq{\frac1{2^n}\omega^{a''}}{n\in\Nbb}}
	$$
	we call such surreal numbers \textbf{monomials}.
\end{Definition}
Actually this definition is uniform (\cite[Corollary 5]{gonshor1986introduction}) and therefore, we can use any representation of $a$ in this definition. Another point is that we can easily check that this notation is consistent with the ordinal exponentiation. More precisely, if $a$ is an ordinal, $\omega^a$ is indeed the ordinal corresponding to the ordinal exponentiation (see \cite[Theorem 5.4]{gonshor1986introduction}). Finally, as announced, this definition gives the simplest elements among the Archimedean classes.

\begin{Theorem}[{\cite[Theorem 5.3]{gonshor1986introduction}}]
	A surreal number is of the form $\omega^a$ if and only if it is simplest positive element in its Archimedean class. More precisely,
	$$
		\forall a\in\No\qquad (\exists c\in\No\quad a=\omega^c)\implies (\forall b\in\Nobf\quad b\asymp a\implies a\sqsubseteq |b|)
	$$
\end{Theorem}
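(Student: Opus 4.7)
The proof splits into two directions, and I would handle them in order.

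\textbf{Direction 1 (if $a=\omega^c$ then $a$ is simplest positive in its Archimedean class).} I would proceed by transfinite induction on the length of $c$. Take the canonical representation $c=\crotq{c'}{c''}$ and recall
$$\omega^c=\crotq{0,\{n\omega^{c'}\}_{n\in\Nbb}}{\{\tfrac{1}{2^n}\omega^{c''}\}_{n\in\Nbb}}.$$
The crucial subclaim to establish first is: \emph{if $c'<c<c''$ are surreals with $c',c''$ simpler than $c$, then $\omega^{c'}\prec\omega^c\prec\omega^{c''}$}. This follows essentially from the recursive definition: since $n\omega^{c'}$ appears in the left cut of $\omega^c$ for every $n$, we get $n\omega^{c'}<\omega^c$ for all $n$, hence $\omega^{c'}\prec\omega^c$; similarly $\omega^c<\tfrac{1}{2^n}\omega^{c''}$ for all $n$ gives $\omega^c\prec\omega^{c''}$. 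Now fix any positive $b$ with $b\asymp\omega^c$. Using the subclaim, for each $c'<c$ simpler than $c$ and each $n\in\Nbb$ we have $n\omega^{c'}\prec\omega^c\asymp b$, so $n\omega^{c'}<b$; likewise $b<\tfrac{1}{2^n}\omega^{c''}$ for every $c''>c$ simpler than $c$. Hence $b$ lies strictly between the left and right cuts defining $\omega^c$, and by the simplicity theorem $\omega^c\simplereq b$.

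\textbf{Direction 2 (if $p$ is simplest positive in its Archimedean class then $p=\omega^c$ for some $c$).} The strategy is to produce a $c$ with $\omega^c\asymp p$; the conclusion then follows from Direction 1 together with the uniqueness theorem preceding the statement. To produce such a $c$, I would consider the cut
$$C=\{c\in\No : \omega^c\prec p\},\qquad D=\{c\in\No : p\prec\omega^c\}.$$
If there already exists $c$ with $\omega^c\asymp p$ we are done. Otherwise $C\cup D=\No$, and since the map $c\mapsto\omega^c$ is strictly $\prec$-increasing (by the subclaim above) we have $C<D$; set $c_0=\crotq{C}{D}$. I would then verify from the defining recursion that $\omega^{c_0}\asymp p$: for each $c'\in C$ and $n\in\Nbb$, $n\omega^{c'}<p$ (definition of $C$ together with $\omega^{c'}\prec p$), and for each $c''\in D$ and $n\in\Nbb$, $p<\tfrac{1}{2^n}\omega^{c''}$; combined with the fact that $\omega^{c_0}$ is the simplest element between these cuts, a short Archimedean comparison yields $\omega^{c_0}\asymp p$. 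Applying Direction 1 to $\omega^{c_0}$ and invoking uniqueness of the simplest positive element in the common Archimedean class forces $p=\omega^{c_0}$.

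\textbf{Expected main obstacle.} The core technical difficulty is the monotonicity/strict-domination subclaim $c'<c<c''\Rightarrow\omega^{c'}\prec\omega^c\prec\omega^{c''}$ (where $c',c''$ are simpler than $c$): although it reads as an immediate consequence of the recursive definition, making it rigorous requires an intertwined induction with the induction hypothesis of Direction 1, because the definition of $\omega^c$ only mentions $\omega^{c'}$ for $c'$ \emph{simpler} than $c$, while the domination statement needs to be true for all $c'<c$. A standard way around this is to first prove, by induction on $\max(\length{c_1},\length{c_2})$, that $c_1<c_2\Rightarrow\omega^{c_1}\prec\omega^{c_2}$ using the canonical representations and the cofinality of simpler elements, and only afterwards deduce the simplicity statement. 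The verification that $\omega^{c_0}\asymp p$ in Direction 2 is a lesser but similar hurdle, resolved once the monotonicity lemma is available.
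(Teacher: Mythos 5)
Your Direction 1 is correct, and it is essentially the standard argument (note the paper does not reprove this statement; it only cites Gonshor's Theorem 5.3): from the defining representation of $\omega^c$ one gets $n\omega^{c'}<\omega^c<\frac{1}{2^n}\omega^{c''}$ for all $n$, hence $\omega^{c'}\prec\omega^c\prec\omega^{c''}$ for prefixes $c'<c<c''$, and then any positive $b\asymp\omega^c$ lies strictly between the two option sets, so $\omega^c\sqsubseteq b$ by the simplicity remark. Observe that for this direction only the prefix case of monotonicity is needed, and it is immediate from the representation; the full lemma $c_1<c_2\Rightarrow\omega^{c_1}\prec\omega^{c_2}$ (Gonshor's Theorem 5.2) is genuinely inductive, as you say, but it is only indispensable in your Direction 2. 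Since the displayed ``More precisely'' implication is exactly Direction 1, that part of the theorem is fully handled by your argument.

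The genuine gap is in Direction 2. The collections $C=\enstq{c}{\omega^c\prec p}$ and $D=\enstq{c}{p\prec\omega^c}$ are proper classes, and the existence theorem for $\crotq{A}{B}$ is only available for sets; but the objection is sharper than a set-theoretic scruple: in the very case in which you invoke the construction you have assumed $C\cup D=\No$, and then no surreal at all lies strictly between $C$ and $D$ --- any candidate $c_0$ satisfies $\omega^{c_0}\prec p$ or $p\prec\omega^{c_0}$, i.e. $c_0\in C\cup D$, contradicting $C<c_0<D$. So $c_0$ cannot be produced, the planned verification $\omega^{c_0}\asymp p$ has nothing to apply to, and the argument does not function as a reductio either, because the contradiction was supposed to come from an object whose existence is exactly what fails. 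The standard repair (Gonshor's) is an induction on the length of the simplest positive element $p$ of the class: take its canonical representation $p=\crotq{A}{B}$ (these are sets of prefixes), note that every positive $a\in A$ and every $b\in B$ lies in a strictly lower, respectively strictly higher, Archimedean class than $p$ (otherwise $p$ would not be simplest in its class), apply the induction hypothesis to the leaders of those classes to obtain exponents, and let $c$ be the cut determined by these exponents; mutual cofinality of the two representations then gives $\omega^c=p$, after which your uniqueness step applies verbatim.
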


Elements of the form $\omega^a$ are by definition positive and have the following property: 

\begin{Proposition}[{\cite[Theorem 5.4]{gonshor1986introduction}}]
	We have 
	\begin{itemize}
		\item $\omega^0=1$
		\item $\forall a,b\in\No\qquad \omega^a\omega^b=\omega^{a+b}$
	\end{itemize}
\end{Proposition}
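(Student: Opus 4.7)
The plan is to treat the two parts separately. The first equality $\omega^0=1$ is immediate: the canonical representation of $0$ has empty left and right classes, so in the uniform definition $\omega^a=\crotq{0,n\omega^{a'}}{\frac1{2^n}\omega^{a''}}$ the indexed families disappear, giving $\omega^0=\crotq{0}{\emptyset}$. By the simplicity principle, the simplest surreal strictly above $0$ with no upper constraint is $1$, so $\omega^0=1$.

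For the product rule $\omega^a\omega^b=\omega^{a+b}$, I would argue by simultaneous transfinite induction on the pair $(a,b)$ in the well-founded simplicity order. Writing the canonical representations $a=\crotq{a'}{a''}$ and $b=\crotq{b'}{b''}$, one combines the definitions of addition and of $\omega^{\cdot}$ to get
$$\omega^{a+b}=\crotq{0,\,n\omega^{a'+b},\,n\omega^{a+b'}}{\tfrac1{2^n}\omega^{a''+b},\,\tfrac1{2^n}\omega^{a+b''}}.$$
The induction hypothesis applied to the strictly simpler pairs $(a',b)$, $(a,b')$, $(a'',b)$, $(a,b'')$ allows one to rewrite each $\omega^{a'+b}$ as $\omega^{a'}\omega^b$, and similarly for the other exponents, so that the right-hand side is expressed purely in terms of products already under control. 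It then remains to compute $\omega^a\omega^b$ via the uniform multiplication formula, letting $x',x''$ range over $\{0,n\omega^{a'}\}$ and $\{\frac1{2^n}\omega^{a''}\}$ (symmetrically for $y',y''$), and to match the resulting canonical representation with the one displayed above.

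The only real obstacle is the cofinality bookkeeping. The multiplication formula produces expressions of the shape $x'y+xy'-x'y'$ that mix three terms, so one must verify that, up to cofinality on the left class and coinitiality on the right class, nothing new arises beyond the options $n\omega^{a'+b}$ and $n\omega^{a+b'}$ on the left, and $\frac1{2^n}\omega^{a''+b}$ and $\frac1{2^n}\omega^{a+b''}$ on the right. This reduction exploits $\omega^{c'}\prec\omega^c$ whenever $c'<c$, which absorbs the lower-order summands into the dominant one and keeps the sign of each combination correct. Once the two canonical representations are matched up to cofinality, the simplicity theorem delivers $\omega^a\omega^b=\omega^{a+b}$.
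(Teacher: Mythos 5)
Your argument is correct and is essentially the proof of the result cited in the paper (Gonshor, Theorem 5.4), which the paper itself does not reprove: one uses uniformity of the $\omega$-map on the representation $a+b=\crotq{a'+b,\,a+b'}{a''+b,\,a+b''}$, induction on simplicity, and mutual cofinality of the two representations, the nontrivial direction being absorbed by the prior dominance fact that $c'<c$ implies $n\omega^{c'}<\omega^{c}$ for all $n$ (Gonshor's Theorem 5.2), exactly as you indicate. Only two cosmetic slips: the matched representations are not canonical ones, and the final step invokes the cofinality theorem rather than the simplicity principle.
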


Thanks to this definition of the $\omega$-exponentiation, we are now ready to expose a normal form for surreal numbers which is analogous to the Cantor normal form for ordinal normal. 

\begin{Definition}[{\cite[Section 5C, page 59]{gonshor1986introduction}}]
	For $\nu$ an ordinal number, $\suitelt{r_i}i\nu$ a sequence of non-zero real numbers and $\suitelt{a_i}i\nu$ a decreasing sequence of surreal numbers, we define $\aSurreal$ inductively as follows:
	\begin{itemize}
		\item If $\nu=0$, then $\aSurreal=0$
		\item If $\nu=\nu'+1$ then $\aSurreal=\aSurrealPrefix+r_{\nu'}\omega^{a_{\nu'}}$
		\item If $\nu$ is a limit ordinal,
		$$
			\aSurreal=\crotq{\enstq{\aSurrealPrefix + s\omega^{a_{\nu'}}}{\begin{array}{c}
						\nu'<\nu\\ s<r_{\nu'}
					\end{array}}}{\enstq{\aSurrealPrefix+s\omega^{a_{\nu'}}}{\begin{array}{c}
					\nu'<\nu\\ s>r_{\nu'}
				\end{array}}}
		$$
	\end{itemize}
\end{Definition}
Note that if $0$ is seen as a limit ordinal, then both definition are consistent.

\begin{Theorem}[{\cite[Theorem 5.6]{gonshor1986introduction}}]
	\label{thm:normalForm}
	Every surreal number can be uniquely written as $\aSurreal$. This expression will be called its normal form. 
\end{Theorem}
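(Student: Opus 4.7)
The plan is to split the claim into existence and uniqueness, both handled by transfinite induction on the index, using the Archimedean structure and the characterization of monomials recalled just before the statement.

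For existence, given a nonzero surreal $x$, I would extract its terms greedily. By the preceding theorem, the simplest positive element in the Archimedean class of $x$ is some monomial $\omega^{a_0}$; let $r_0\in\R\setminus\{0\}$ be the unique real such that $x - r_0\omega^{a_0}\prec\omega^{a_0}$. Its existence and uniqueness come from the fact that modulo the convex ideal of elements $\prec\omega^{a_0}$, the quotient is an Archimedean ordered group over $\omega^{a_0}\R$, hence isomorphic to $\R$. Set $x_1 = x - r_0\omega^{a_0}$, which has strictly smaller Archimedean class, and iterate, defining $a_\beta$, $r_\beta$ and $x_{\beta+1}$ at successor stages. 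At a limit stage $\nu$, define $\aSurrealPrefix$ by the bracket formula of the definition and put $x_\nu = x - \aSurrealPrefix$, continuing as long as $x_\nu\neq 0$.

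Two things need to be verified about this construction. First, that at each limit $\nu$ the bracket expression is well-formed and really equals the partial sum we have built up: since $s\mapsto \aSurrealPrefix + s\omega^{a_{\nu'}}$ is strictly $<$-monotone in $s\in\R$ and the $a_{\nu'}$ are strictly decreasing, the two families in the bracket are correctly ordered, and an induction on $\nu$ shows their simplest separator is the surreal obtained by cofinal cumulation of the previous partial sums. Second, and most delicate, that the construction terminates at some ordinal $\nu$: the sequence $(a_\beta)$ is strictly $<$-decreasing surreals, and one controls each $a_\beta$ by a simplicity bound tied to the length of $x$ (the $a_\beta$ are found among the simpler elements of certain residues of $x$), ruling out a proper-class-long recursion and forcing $x_\nu = 0$ for some ordinal $\nu$.

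For uniqueness, suppose $x = \aSurreal = \sum_{j<\mu} s_j\omega^{b_j}$ are two normal forms. The leading Archimedean class of $x$ is determined by $x$ itself, and the simplest positive element of that class is unique, hence $\omega^{a_0} = \omega^{b_0}$ and $a_0 = b_0$. The real projection onto that class then forces $r_0 = s_0$. Subtracting $r_0\omega^{a_0}$ drops strictly in Archimedean class, and a transfinite induction on the index (handling limit stages by the cofinality at which the two bracket expressions agree on all proper initial partial sums) yields $a_i = b_i$, $r_i = s_i$ for all $i$, and finally $\nu = \mu$.

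The main obstacle is the termination estimate in existence: the greedy extraction of a leading monomial is transparent, but bounding the ordinal length $\nu$ of the resulting sequence requires a careful simplicity/length bookkeeping, since \emph{a priori} nothing forbids the successive remainders from descending through a proper class of Archimedean classes. All the rest reduces to routine verifications once this bound is in place.
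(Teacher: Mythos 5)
Your overall architecture (greedy extraction of leading terms via Archimedean classes for existence, leading-term induction for uniqueness) is the standard one, and the uniqueness half, as well as the extraction of $a_0$ and $r_0$, is essentially fine. But there is a genuine gap, and it is exactly the point you flag yourself and then wave away: the termination of the transfinite extraction. Saying that ``one controls each $a_\beta$ by a simplicity bound tied to the length of $x$'' is not an argument; nothing in what you wrote shows that the successive remainders $x_\beta$ cannot keep producing new terms through all the ordinals, and this is precisely the non-routine content of Gonshor's theorem rather than a verification to be postponed. The way this is actually closed is by proving a quantitative statement about the formal sums $\Sum{i<\beta}{}r_i\omega^{a_i}$ built during the recursion --- for instance that every term (equivalently every partial sum) has length at most $\length{x}$, which is the content the paper later quotes as Lemma \ref{lem:lengthTerm}, or the explicit sign-expansion description of such sums quoted as Theorem \ref{thm:serieToSignExp}. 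With such a bound, the exponents $a_\beta$ are strictly decreasing, hence the terms are pairwise distinct surreals all of length bounded in terms of $\length{x}$, a set, so the recursion must halt at some ordinal. Note also that you cannot simply cite those statements as given, since as stated they concern the normal form whose existence is being proved; one must establish the corresponding facts for arbitrary formal sums with decreasing exponents first, which is how Gonshor organizes Section 5C.

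A smaller but related omission: at limit stages you assert that the bracket expression in the definition of $\Sum{i<\nu'}{}r_i\omega^{a_i}$ ``really equals the cofinal cumulation of the previous partial sums'' and that the two option sets are correctly separated; this compatibility (and the fact that subtracting the limit partial sum from $x$ again lands strictly below every $\omega^{a_i}$, $i<\nu'$, so the greedy step can continue) also needs proof and is part of the same bookkeeping. So the proposal is a correct high-level plan, but the step on which the whole existence proof rests is missing, not merely deferred.
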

Note that  if $a$ is an ordinal number, then its normal form coincides with its Cantor normal form. In such a sum, elements $r_i\omega^{a_i}$ will be called the \textbf{terms} of the serie.

\begin{Definition}
	A surreal number $a$ in normal form $a=\aSurreal$ is
	\begin{itemize}
		\item \textbf{purely infinite} if for all $i<\nu$, $a_i>0$.
		\item \textbf{infinitesimal} if for all $i<\nu$, $a_i<0$ (or equivalently if $a\prec 1$). 
		\item \textbf{appreciable} if for all $i<\nu$, $a_i\leq0$ (or equivalently if $a\preceq 1$).
	\end{itemize}
	If $\nu'\leq\nu$ is the first ordinal such that $a_i\leq0$, then $\aSurrealPrefix$ is called the \textbf{purely infinite part} of $a$. Similarly, if $\nu'\leq\nu$ is the first ordinal such that $a_i<0$, $\Sumlt {\nu'\leq i}\nu r_i\omega^{a_i}$ is called the \textbf{infinitesimal part} of $a$.
\end{Definition}

\begin{Theorem}[{\cite[Theorems 5.7 and 5.8]{gonshor1986introduction}}]
	\label{thm:normalFormOp}
	Operation over surreal numbers coincides with formal addition and formal multiplication over the normal forms. More precisely,
	
	$$
		\aSurreal+\Sumlt i{\nu'}s_i\omega^{b_i} = \Sum{x\in\No}{}t_x\omega^x
	$$
	where
	\begin{itemize}
		\item $t_x=r_i$ if $i$ is such that $a_i=x$ and there is no $i$ such that $b_i=x$.
		\item $t_x=s_i$ if $i$ is such that $b_i=x$ and there is no $i$ such that $a_i=x$.
		\item $t_x=r_i+s_j$ if $i$ is such that $a_i=x$ and $j$ is such that $b_j=x$
	\end{itemize}
	and
	$$
		\pa{\aSurreal}\pa{\Sumlt i{\nu'}s_i\omega^{b_i}}= \Sum{x\in\No}{}\pa{\Sum{\tiny\enstq{\begin{array}{c}
						i<\nu\\ j<\nu'
				\end{array}}{a_i+b_j=x}}{}r_is_j}\omega^x
	$$
\end{Theorem}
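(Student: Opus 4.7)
The plan is to prove both identities by transfinite induction on the lengths $\nu$ and $\nu'$, exploiting the recursive definition of the normal form, the uniformity of the surreal addition and multiplication (which allow any representation $\crotq{L}{R}$ to be used, not just the canonical one), and the already-established identity $\omega^a\omega^b = \omega^{a+b}$.

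For the additive identity, I would first handle the successor case by splitting $\Sumlt{i}{\nu'+1}s_i\omega^{b_i} = \Sumlt{i}{\nu'}s_i\omega^{b_i} + s_{\nu'}\omega^{b_{\nu'}}$, applying the inductive hypothesis to merge the shorter series with $\aSurreal$, and then verifying the core lemma: adding a single monomial $s\omega^b$ to a normal form either replaces the coefficient $t_i$ of the unique term $t_i\omega^{c_i}$ with $c_i = b$ by $t_i + s$, or inserts a new term $s\omega^b$ at the position dictated by the decreasing order on the exponents. This step follows by case analysis on where $b$ sits among the $c_i$'s and a direct computation using the $\crotq{L}{R}$ definition of surreal addition. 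For limit $\nu'$, I would match the $\crotq{L}{R}$ cut given by the limit clause of the normal-form definition against the cut produced by adding $\aSurreal$ termwise, using cofinality arguments to identify the two.

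For the multiplicative identity, I would first establish the monomial case $(r\omega^a)\cdot\Sumlt{j}{\nu'}s_j\omega^{b_j} = \Sumlt{j}{\nu'}rs_j\omega^{a+b_j}$ by induction on $\nu'$, using $\omega^a\omega^b=\omega^{a+b}$ at the successor step and matching limit cuts at the limit step. The general case then follows from distributivity: write $\pa{\aSurreal}\pa{\Sumlt{j}{\nu'}s_j\omega^{b_j}} = \Sumlt{i}{\nu} (r_i\omega^{a_i})\pa{\Sumlt{j}{\nu'}s_j\omega^{b_j}}$ by induction on $\nu$, apply the monomial case to each summand, and then use the additive identity from the first part to aggregate the results, producing the claimed convolution.

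The main obstacle lies in the reorganization underlying the multiplicative formula: one must show that the family $\{a_i+b_j : i<\nu,\; j<\nu'\}$ can be rewritten as a strictly decreasing transfinite sequence of surreal numbers, and that for every $x\in\No$ the fiber $\{(i,j) : a_i+b_j=x\}$ is finite, so that $\Sum{a_i+b_j=x}{}r_is_j$ is a well-defined real number. This is a Neumann-style lemma: the Minkowski sum of two reverse-well-ordered subsets of an ordered abelian group is reverse-well-ordered with finite fibers. Combined with carefully matching the $\crotq{L}{R}$ cuts coming from the recursive product formula against the cuts produced by the normal-form definition at limit ordinals, this yields the result. Handling the double limit case, where both $\nu$ and $\nu'$ are limit ordinals, is the most delicate part of the induction and closely mirrors Gonshor's original transfinite bookkeeping in \cite[Theorems 5.7 and 5.8]{gonshor1986introduction}.
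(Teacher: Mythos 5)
Your outline is essentially the argument behind Gonshor's Theorems 5.7 and 5.8, which is exactly what the paper relies on: the statement is quoted with a citation and the paper gives no proof of its own. The transfinite induction on the normal forms, the single-monomial addition lemma, the monomial-times-series case followed by termwise aggregation, the cut/cofinality matching at limit stages, and the Neumann-style lemma that the exponent sums form a reverse well-ordered set with finite fibers are precisely the ingredients of the cited proof, so your proposal is correct and takes the same route.
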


We stated that every surreal number has a normal form. However, in the other direction, it is possible to get back the sign expansion from a normal form. We first introduce a new concept that will be critical to express the sign expansion.

\begin{Definition}[Reduced sign expansion, Gonshor, \cite{gonshor1986introduction}]
	\label{def:reducedSignExpansion}
	Let $x=\Sum{i<\nu}{}r_i\omega^{a_i}$ be a surreal number. The reduced 
	sign expansion of $a_i$, denoted $a_i^\circ$ is inductively defined as follows:
	\begin{itemize}
		\item $a_0^\circ=a_0$
		\item For $i>0$, if $a_i(\delta)=-$ and if there is there is $j<i$ 
		such that for $\gamma\leq\delta$, $a_j(\gamma)=a_i(\gamma)$, then we discard
		the minus in position $\delta$ in the sign expansion of $a_i$.
		\item If $i>0$ is a non-limit ordinal and $(a_{i-1})_-$ (as a sign expansion)
		is a prefix of $a_i$, then we discard this minus after $a_{i-1}$ if $r_{i-1}$
		is not a dyadic rational number. 
	\end{itemize}
\end{Definition}
More informally, $a_i^\circ$ is the sign expansion obtained when copying $a_i$ omitting 
the minuses that have already been treated before, in an other exponent of the serie. 
We just keep the new one brought by $a_i$. However, the later case give a condition where
even a new minus can be omitted.

\begin{Theorem}[\cite{gonshor1986introduction}, Theorems 5.11 and 5.12]
	\label{thm:serieToSignExp}
	For $\alpha$ an ordinal and a surreal $a$, we write $|a[:\alpha]|_+$ for the (ordinal) number of pluses in $\alpha[:\alpha]$ the prefix of length of $\alpha$ of $x$. Then,
	\begin{itemize}
		\item The sign expansion of $\omega^a$ is as follows: we start with a plus and the for any ordinal $\alpha<|a|$ we add $\omega^{|a[:\alpha]|_++1}$ occurrences of $a(\alpha)$ (the sign in position $\alpha$ in the signs sequence of $a$).
		\item The sign expansion of $\omega^an$ is the signs sequence of $\omega^a$ followed by $\omega^{|a|_+}(n-1)$ pluses.
		\item The sign expansion of $\omega^a\f1{2^n}$ is the sign expansion of $\omega^a$ followed by $\omega^{|a|_+}n$ minuses.
		\item The sign expansion of $\omega^ar$ for $r$ a positive real is the sign expansion of $\omega^a$ to which we add each sign of $r$ $\omega^{|a|_+}$ times excepted the first plus which is omitted. 
		\item The sign expansion of $\omega^ar$ for $r$ a negative real is the sign expansion of $\omega^a(-r)$ in which we change every plus in a minus and conversely.
		\item The sign expansion of $\Sum{i<\nu}{}r_i\omega^{a_i}$ is the juxtaposition of the sign expansions of the $\omega^{a_i^\circ}r_i$
	\end{itemize}
\end{Theorem}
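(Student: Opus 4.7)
The plan is to prove the six bullets in order, each resting on the previous. The monomial case $\omega^a$ is the foundation; the integer, dyadic, and positive-real coefficient cases then follow by inspecting the canonical cuts defining $\omega^a n$, $\omega^a/2^n$ and $\omega^a r$ and by exploiting density of dyadics in $\Rbb$; the negative real case is a bare symmetry via $\omega^a r = -\omega^a(-r)$ and pointwise sign flipping; and the general normal-form sum $\sum_{i<\nu} r_i\omega^{a_i}$ is the main difficulty, where the replacement of $a_i$ by its reduced expansion $a_i^\circ$ (Definition \ref{def:reducedSignExpansion}) must be justified.

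For the first bullet I would argue by transfinite induction on $|a|$, starting from the defining cut of $\omega^a$ recalled in the excerpt, namely
\[
\omega^a = \crotq{\{0\}\cup\{n\omega^{a'} : n\in\Nbb,\, a'\simpler a,\, a'<a\}}{\{\omega^{a''}/2^n : n\in\Nbb,\, a''\simpler a,\, a''>a\}}.
\]
Since $\omega^a>0$ and every simpler positive surreal lies in the left option, the expansion begins with $+$. For the inductive step, fix $\alpha<|a|$ and assume that the prefix produced so far coincides with the sign expansion of $\omega^{a[:\alpha]}$; splitting on the value of $a(\alpha)$ and examining the options that become available once $a[:\alpha]$ is extended by one sign, one shows that exactly $\omega^{|a[:\alpha]|_+ + 1}$ further copies of $a(\alpha)$ must appear before the expansion can branch again. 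The exponent $\omega^{|a[:\alpha]|_+ + 1}$ reflects the identity $\omega^{a+1} = \omega^a\cdot\omega$: a $+$ in the exponent multiplies the local block width by $\omega$, whereas a $-$ leaves it unchanged. Bullets two through five are then local surgeries on this expansion: $\omega^a n$ is the simplest surreal strictly between $\omega^a(n-1)$ and $\omega^a(n+1)$, forcing $\omega^{|a|_+}(n-1)$ trailing pluses; $\omega^a/2^n$ is its dyadic dual with trailing minuses; $\omega^a r$ for $r\in\Rbb_{>0}$ is obtained by iterating the integer and dyadic cases along the sign expansion of $r$, each non-initial sign of $r$ contributing a block of $\omega^{|a|_+}$ copies of itself; and $\omega^a r$ for $r<0$ follows from negation flipping every sign.

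The sum bullet is the crux. I would argue by transfinite induction on $\nu$ that, writing $s_\mu := \sum_{i<\mu} r_i \omega^{a_i}$, the sign expansion of $s_\mu$ is the juxtaposition of the expansions of $\omega^{a_i^\circ} r_i$ for $i<\mu$. The successor step reduces to recognising $s_{\mu+1}$ as the simplest element of an appropriate cut built from $s_\mu$ together with monomials $r\omega^{a''}$ for $a''\simpler a_\mu$; the limit step follows by a cofinality argument between the two cut representations. The main obstacle is the substitution $a_\mu \mapsto a_\mu^\circ$: minuses of $a_\mu$ that already occurred in some earlier $a_j$ correspond to branches of the simplicity tree that $s_\mu$ has already crossed, so simplicity of $s_{\mu+1}$ forbids them from reappearing; the subtler second clause of Definition \ref{def:reducedSignExpansion} (suppression of the single $-$ following $(a_{i-1})_-$ when $r_{i-1}$ is non-dyadic) requires a finer local analysis of how the sign expansion of $r_{i-1}$ trails off, showing that the apparently missing minus is in fact already written down by that tail. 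This matching between the trailing behaviour of real coefficients and the leading behaviour of reduced exponent expansions is the most delicate bookkeeping in the argument, and is the step that dominates the proof of \cite[Theorems 5.11 and 5.12]{gonshor1986introduction}.
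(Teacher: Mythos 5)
You should first note that the paper contains no proof of this statement: it is imported verbatim, with attribution, from Gonshor's book (Theorems 5.11 and 5.12), so there is no in-paper argument to compare against. As a roadmap your outline does follow the same general route as Gonshor's original development — transfinite induction on the exponent for the monomial case, cut/cofinality analysis for integer, dyadic and real coefficients, sign-flipping for negative reals, and induction on $\nu$ with the reduced expansions $a_i^\circ$ for the juxtaposition statement — so the overall architecture is the right one.

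As a proof, however, it contains a concretely false step and leaves the two hardest counts unargued. The false step is the claim that $\omega^a n$ is the simplest surreal strictly between $\omega^a(n-1)$ and $\omega^a(n+1)$: already for $a=1$, $n=2$, the number $\omega+1$ lies strictly between $\omega$ and $\omega\cdot 3$ and is strictly simpler than $\omega\cdot 2$ (its sign expansion, $\omega+1$ pluses, is a proper prefix of the $\omega\cdot 2$ pluses of $\omega\cdot 2$), so the interval you name does not pin down $\omega^a n$. The correct cut comes from the uniform product formula applied to $\omega^a$ and the representation of $n$ with left part $\{n-1\}$ and empty right part, giving $\omega^a n=\crotq{\omega^a(n-1)+x'}{\omega^a(n-1)+x''}$ where $x'$ ranges over the left options $0$ and $m\omega^{a'}$ of $\omega^a$ and $x''$ over its right options $\omega^{a''}/2^m$; the trailing $\omega^{|a|_+}(n-1)$ pluses must be read off from that cut, and the analogous repair is needed for the dyadic and general real cases (where, in addition, $r$ may have a length-$\omega$ expansion, so a limit/cofinality argument is required rather than mere iteration). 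Beyond this, the two quantitative cores of the theorem — that the sign in position $\alpha$ of $a$ contributes exactly $\omega^{|a[:\alpha]|_+ +1}$ copies of itself to $\omega^a$, and that in the sum case the minuses deleted in $a_i^\circ$ (including the clause about a non-dyadic $r_{i-1}$) are exactly those forced by simplicity of the partial sums — are stated as goals rather than established; these are precisely the lengthy inductions that occupy Gonshor's Chapter 5, so your text should be regarded as a plan for his proof rather than a proof.
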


As a final note of this subsection, we give some bounds on the length of monomials and terms.

\begin{Lemma}[{\cite[Lemma 4.1]{DriesEhrlich01}}]
	\label{lem:lengthOmegaA}
	For all surreal number $a\in\Nobf$, $$\length a\leq \length{\omega^a}\leq\omega^{\length a}$$
\end{Lemma}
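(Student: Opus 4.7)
The plan is to read off an explicit ordinal expression for $\length{\omega^a}$ from Theorem~\ref{thm:serieToSignExp} and then bound it above and below by direct ordinal computation. Concretely, the first bullet of Theorem~\ref{thm:serieToSignExp} says the signs sequence of $\omega^a$ is a plus followed, for each $\alpha < \length a$, by a block of $\omega^{|a[:\alpha]|_+ + 1}$ copies of the sign $a(\alpha)$. Concatenating these blocks gives
\[
\length{\omega^a} \;=\; 1 \,+\, \sum_{\alpha < \length a} \omega^{|a[:\alpha]|_+ + 1},
\]
where the sum on the right is an ordinal sum. The case $\length a = 0$ is handled separately by observing $a=0$, so $\omega^a = 1$ and $\length{\omega^a} = 1 = \omega^0$; in what follows I assume $\length a \geq 1$.

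For the lower bound, each exponent $|a[:\alpha]|_+ + 1$ is at least $1$, so each block contributes an ordinal at least $\omega \geq 1$. Monotonicity of ordinal sums then gives
\[
\length{\omega^a} \;\geq\; \sum_{\alpha < \length a} 1 \;=\; \length a.
\]
For the upper bound, use the trivial estimate $|a[:\alpha]|_+ \leq \alpha$, so that $\omega^{|a[:\alpha]|_+ + 1} \leq \omega^{\alpha+1}$, and hence
\[
\length{\omega^a} \;\leq\; 1 + \sum_{\alpha < \length a} \omega^{\alpha+1}.
\]
A quick transfinite induction on $\beta \geq 1$ shows that $\sum_{\alpha<\beta} \omega^{\alpha+1} = \omega^{\beta}$: the successor step uses $\omega^{\gamma} + \omega^{\gamma+1} = \omega^{\gamma+1}$, and the limit step follows from $\omega^{\beta} = \sup_{\gamma<\beta}\omega^{\gamma}$. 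Substituting $\beta = \length a$ gives $\length{\omega^a} \leq 1 + \omega^{\length a} = \omega^{\length a}$, where the last equality holds because $\length a \geq 1$ forces $\omega^{\length a} \geq \omega$ so the leading $1$ is absorbed.

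There is essentially no obstacle here beyond bookkeeping: the content is entirely in Theorem~\ref{thm:serieToSignExp}, and the rest is the identity $\sum_{\alpha<\beta}\omega^{\alpha+1}=\omega^\beta$ together with the fact that prepending or appending a finite ordinal to an infinite one does not change it. The only mildly subtle point is remembering that the sums are ordinal (not Hessenberg) sums, but since the exponents $|a[:\alpha]|_+ + 1$ are weakly increasing in $\alpha$ the ordinal sum behaves as expected and the argument goes through uniformly.
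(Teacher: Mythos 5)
Your proof is correct. Note that the paper itself does not prove this lemma at all: it imports it verbatim as Lemma 4.1 of van den Dries--Ehrlich, so there is no internal proof to compare against; what you give is a legitimate self-contained derivation from Theorem~\ref{thm:serieToSignExp}, namely the exact length formula $\length{\omega^a}=1+\sum_{\alpha<\length a}\omega^{|a[:\alpha]|_++1}$ (ordinal sum), the lower bound from each block having length at least $1$, and the upper bound from $|a[:\alpha]|_+\leq\alpha$ together with the identity $\sum_{\alpha<\beta}\omega^{\alpha+1}=\omega^{\beta}$ for $\beta\geq 1$. All the ordinal-arithmetic steps you invoke are sound: termwise comparison of transfinite ordinal sums is justified by weak monotonicity of addition in both arguments plus suprema at limits (your closing remark about the exponents being weakly increasing is not actually needed for this), and the successor step of your induction is fine because the base case $\beta=1$ is handled directly, so the induction hypothesis is never applied at $\gamma=0$. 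The degenerate case $\length a=0$ is also correctly dispatched. This is essentially the standard argument for this bound (via Gonshor's sign-expansion description of $\omega^a$), so nothing is missing.
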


\begin{Lemma}[{\cite[Lemma 6.3]{gonshor1986introduction}}]
	\label{lem:lengthTerm}
	Let $x=\aSurreal$ a surreal number. We have for all $i<\nu$, $\length{r_i\omega^{a_i}}\leq\length x$.
\end{Lemma}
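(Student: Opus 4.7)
The plan is to combine Theorem~\ref{thm:serieToSignExp} (which produces the sign expansion of any normal form) with a bookkeeping argument over the minuses that are deleted in passing from $a_i$ to its reduced form $a_i^\circ$. By Theorem~\ref{thm:serieToSignExp} the sign expansion of $x=\aSurreal$ is the juxtaposition, in order $j=0,1,\dots$, of the sign expansions of the reduced terms $\omega^{a_j^\circ}r_j$. Since the reductions $a_j^\circ$ for $j\leq i$ depend only on $a_0,\dots,a_{j-1}$ and not on later terms, the first $i+1$ such blocks form the sign expansion of the truncation $x_{\leq i}=\Sum{j\leq i}{}r_j\omega^{a_j}$, so $\length{x_{\leq i}}\leq\length{x}$ and it suffices to prove the lemma with $x$ replaced by $x_{\leq i}$; i.e., we may assume $i$ is the maximal index.

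Writing $x=x'+r_i\omega^{a_i}$ with $x'=\Sum{j<i}{}r_j\omega^{a_j}$, Theorem~\ref{thm:serieToSignExp} applied to $x$ and to the one-term normal form $r_i\omega^{a_i}$ (for which trivially $a_i^\circ=a_i$) yields
\begin{equation*}
\length{x}=\length{x'}+\length{\omega^{a_i^\circ}r_i},\qquad\length{r_i\omega^{a_i}}=\length{\omega^{a_i}r_i}.
\end{equation*}
The reduction $a_i\mapsto a_i^\circ$ only deletes minuses, so $|a_i|_+=|a_i^\circ|_+$, and the ``real tail'' contributed by $r_i$ in the sign expansion of $\omega^{a_i}r_i$ (of length $\omega^{|a_i|_+}\cdot(\length{r_i}-1)$ by items 2--5 of Theorem~\ref{thm:serieToSignExp}) matches that of $\omega^{a_i^\circ}r_i$. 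The task reduces to
\begin{equation*}
\length{\omega^{a_i}}\;\leq\;\length{x'}+\length{\omega^{a_i^\circ}}.
\end{equation*}

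The formula in Theorem~\ref{thm:serieToSignExp} expresses $\length{\omega^{a_i}}$ as $1$ plus a transfinite sum whose $\delta$-th summand is $\omega^{|a_i[:\delta]|_+ + 1}$; the corresponding sum for $\length{\omega^{a_i^\circ}}$ simply omits the summands at the deleted positions $\delta\in D$. So the excess is the transfinite sum (in increasing order of $\delta$) of $\omega^{k_\delta+1}$, with $k_\delta=|a_i[:\delta]|_+$. By Definition~\ref{def:reducedSignExpansion}, each first-rule deletion at $\delta$ witnesses a least index $j(\delta)<i$ such that $a_{j(\delta)}$ agrees with $a_i$ on $[0,\delta]$; minimality of $j(\delta)$ ensures that the minus at position $\delta$ of $a_{j(\delta)}$ is itself retained in $a_{j(\delta)}^\circ$, and since $|a_{j(\delta)}[:\delta]|_+=k_\delta$ it contributes the same ordinal $\omega^{k_\delta+1}$ to $\length{\omega^{a_{j(\delta)}^\circ}}\leq\length{x'}$. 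For the at-most-one third-rule deletion at $\delta^\star=\length{a_{i-1}}$, the contribution $\omega^{k_{\delta^\star}+1}$ coincides with the ``real tail'' of $\length{\omega^{a_{i-1}^\circ}r_{i-1}}$: since $r_{i-1}$ is not dyadic, $\length{r_{i-1}}=\omega$ and the tail has length $\omega^{|a_{i-1}|_+ + 1}=\omega^{k_{\delta^\star}+1}$, absorbed into $\length{x'}$.

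The main technical point is to check that these absorptions are order-compatible. A short argument shows that $j(\delta)$ is non-decreasing in $\delta$: if $a_j$ matches $a_i$ on $[0,\delta']$ with $\delta<\delta'$, then it matches on $[0,\delta]$, so $j(\delta)\leq j(\delta')$. Within a fixed $j$, the deleted positions $\delta$ with $j(\delta)=j$ appear in the same order in the excess sum and in the sum defining $\length{\omega^{a_j^\circ}}$. Grouping the excess by $j(\delta)$ thus yields, summand by summand in the order of $j$, a sub-sum of $\length{x'}=\Sum{j<i}{}\length{\omega^{a_j^\circ}r_j}$; the standard monotonicity $\alpha_j\leq\beta_j\Rightarrow\Sum{}{}\alpha_j\leq\Sum{}{}\beta_j$ for transfinite ordinal sums then gives the desired bound $\length{\omega^{a_i}}\leq\length{x'}+\length{\omega^{a_i^\circ}}$. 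The transfinite-$\nu$ case follows from the truncation reduction of the first paragraph together with a transfinite induction on $i$. The principal obstacle is precisely this last matching step, since ordinal addition is not commutative and one must verify that the identified sub-sum of $\length{x'}$ assembles in an order compatible with the natural $\delta$-order of the excess sum.
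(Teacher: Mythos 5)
The paper does not prove this lemma (it imports it as \cite[Lemma 6.3]{gonshor1986introduction}), so your argument can only be judged on its own terms; its overall strategy -- deriving the bound from Theorem~\ref{thm:serieToSignExp} by charging each discarded minus of $a_i$ to an earlier block -- is reasonable, but two steps are genuinely broken. First, the retention claim is false: minimality of $j(\delta)$ only excludes a \emph{first-rule} deletion of the minus at position $\delta$ in $a_{j(\delta)}$; it can still be discarded by the third (dyadic) rule of Definition~\ref{def:reducedSignExpansion}. Concretely, take $x=\tfrac13\omega+\omega^{1/2}+r\,\omega^{1/4}$, i.e.\ $a_0=(+)$, $a_1=(+,-)$, $a_2=(+,-,-)$, $r_0=\tfrac13$, $r_1=1$. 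The minus of $a_2$ at position $1$ is a first-rule deletion with minimal witness $j(1)=1$, yet the minus at position $1$ of $a_1$ is itself discarded in $a_1^\circ$ (third rule, since $r_0$ is not dyadic), so the summand of $\length{\omega^{a_1^\circ}}$ you charge it to does not exist. The compensation must then come from the real tail of block $0$, a case your bookkeeping (distinctness of targets, order compatibility with the other matches and with the third-rule deletion of $a_i$ itself) does not cover.

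Second, even granting a correct, order-compatible matching, the inference ``excess $\leq\length{x'}$, hence $\length{\omega^{a_i}}\leq\length{x'}+\length{\omega^{a_i^\circ}}$'' is not an instance of termwise monotonicity: it implicitly uses the splitting inequality $\length{\omega^{a_i}}\leq(\text{deleted summands, in }\delta\text{-order})+(\text{retained summands, in order})$, and splitting an interleaved transfinite sum this way can strictly \emph{decrease} its value even for non-decreasing powers of $\omega$. For instance, with $\omega+1$ summands all equal to $\omega$ and the last one deleted, the interleaved sum is $\omega^{2}+\omega$ while deleted-plus-retained is $\omega+\omega^{2}=\omega^{2}$. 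So the step you yourself flag as ``the principal obstacle'' is exactly where the proof is incomplete. To close it one has to exploit more structure of the deletion pattern -- e.g.\ observe that if the minus of $a_i$ at position $\delta$ is discarded then so is every earlier minus of $a_i$ (the same witness, or $a_{i-1}$ in the third-rule case, works), so the discarded positions form an initial segment of the minus-positions and every discarded summand $\omega^{k_\delta+1}$ strictly dominates all retained summands preceding it -- and then prove the corresponding rearrangement inequality; none of this is in your write-up.
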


\subsubsection{Hahn series and surreal numbers}
As a consequence of Theorems \ref{thm:normalForm} and \ref{thm:normalFormOp}, the field $\No$ in in fact a Hahn serie field. More precisely,

\begin{Corollary}
	The fields $\No$ and $\R((t^{\No}))$ are isomorphic.
\end{Corollary}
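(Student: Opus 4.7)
The plan is to construct the isomorphism explicitly from the normal form theorem (Theorem \ref{thm:normalForm}) and verify that it respects the ring operations via Theorem \ref{thm:normalFormOp}. Concretely, I would define
\[
	\phi : \No \longrightarrow \HahnField{\R}{\No}, \qquad \phi\pa{\Sumlt i\nu r_i\omega^{a_i}} = \Sumlt i\nu r_i t^{a_i},
\]
where the argument is the unique normal form of $x \in \No$ given by Theorem \ref{thm:normalForm}.

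First I would check that $\phi(x)$ is indeed an element of $\HahnField{\R}{\No}$. The normal form has exponents $(a_i)_{i<\nu}$ forming a strictly decreasing sequence indexed by an ordinal, and the coefficients $r_i$ are nonzero reals; so the support $\{a_i : i<\nu\}$ is reverse well-ordered (equivalently, well-ordered for the convention on Hahn supports used throughout the paper, in which $t$ plays the role of an infinitesimal so that a decreasing sequence of exponents corresponds to a well-ordered support of descending monomials). Bijectivity is then immediate: injectivity follows from the uniqueness part of Theorem \ref{thm:normalForm}, and surjectivity follows from the existence part combined with the fact that any Hahn series $\sum_{g \in S} c_g t^g$ has a (reverse) well-ordered support which can be enumerated as a strictly decreasing sequence $(a_i)_{i<\nu}$ of length some ordinal $\nu$, giving a normal form whose image under $\phi$ is the series we started with.

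That $\phi$ is a ring homomorphism is essentially the content of Theorem \ref{thm:normalFormOp}: the description there of $x+y$ and $xy$ in terms of the normal forms of $x$ and $y$ is term-by-term identical to the definition of addition and multiplication of Hahn series recalled in Section \ref{sec:hahn}. Thus, writing $x = \sum_{i<\nu} r_i \omega^{a_i}$ and $y = \sum_{j<\nu'} s_j \omega^{b_j}$, one reads off
\[
	\phi(x+y) = \Sum{a\in\No}{}(r_i+s_j)\,t^a = \phi(x)+\phi(y),
\]
\[
	\phi(xy) = \Sum{a\in\No}{}\pa{\Sum{a_i+b_j=a}{}r_is_j}t^a = \phi(x)\phi(y),
\]
the coefficient conventions of the two right-hand sides matching exactly those in Theorem \ref{thm:normalFormOp}. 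Since $\phi$ clearly sends $0$ and $1$ (whose normal forms are the empty sum and $1\cdot\omega^0$) to the corresponding Hahn series, it is a ring isomorphism.

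The only real subtlety, and therefore the main thing to be careful about, is the convention for supports of Hahn series versus the convention of decreasing exponents in Gonshor's normal form. There is nothing deep here, only a potentially confusing bookkeeping issue about whether the ambient variable $t$ is interpreted as large or small; once one fixes the convention consistently with Section \ref{sec:hahn}, the correspondence is tautological. Everything else is a direct transcription of Theorems \ref{thm:normalForm} and \ref{thm:normalFormOp}, which is exactly why this is phrased as a corollary.
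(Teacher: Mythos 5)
Your overall strategy is the same as the paper's: read the isomorphism off from the normal form theorem (Theorem \ref{thm:normalForm}) and check the operations against Theorem \ref{thm:normalFormOp}. However, there is a genuine defect in the map as you define it. In Section \ref{sec:hahn} a Hahn series $\sum_{g\in S}a_g t^g$ is required to have support $S$ \emph{well-ordered in the order of} $G=\No$, whereas the exponents $(a_i)_{i<\nu}$ of a normal form are strictly \emph{decreasing}; for infinite $\nu$ the set $\{a_i : i<\nu\}$ is reverse well-ordered, not well-ordered, so $\phi(x)=\sum_{i<\nu}r_i t^{a_i}$ is simply not an element of $\HahnField{\R}{\No}$ as the paper defines it. Your parenthetical justification gets the convention backwards: under the ``$t$ is infinitesimal'' reading, a well-ordered support corresponds to \emph{increasing} exponents (decreasing monomials), so a decreasing sequence of exponents does not qualify.

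The repair is exactly the paper's one-line proof: send $t^a$ to $\omega^{-a}$, i.e.\ define
\[
	\phi\Bigl(\sum_{i<\nu} r_i\,\omega^{a_i}\Bigr) \;=\; \sum_{i<\nu} r_i\, t^{-a_i}.
\]
Negation reverses the order, so the support $\{-a_i : i<\nu\}$ is genuinely well-ordered of order type $\nu$, and since $a\mapsto -a$ is an automorphism of $(\No,+)$ with $(-a_i)+(-b_j)=-(a_i+b_j)$, the term-by-term comparison with Theorem \ref{thm:normalFormOp} that you sketch goes through verbatim for both sum and product, as does the bijectivity argument from uniqueness and existence of normal forms. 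With that single sign correction your proof coincides with the paper's.
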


\begin{proof}
	Sending $t^a$ to $\omega^{-a}$ for all surreal number $a$, we notice that all the definitions match to each other.
\end{proof}

Notice that we have of course $\Nobf= \HahnFieldOrd{\R}{\Nobf}{\Ord}$.

\section{Surreal subfields}
\label{sec:erhlichandco}

\subsection{Subfields defined by Gonshor's representation}


%
  Let $\Nolambda$ denote the set surreal number whose signs 
  sequences have length less than $\lambda$ where $\lambda$ is some ordinal.
  We have of course $\Nobf = \bigcup_{\lambda \in \On} \Nolambda$.  
  
Van den Dries and Ehrlich have proved the following: 

  \begin{Theorem}[\cite{DriesEhrlich01,van2001erratum}]
   The ordinals $\lambda$
  such that $\Nolambda$ is closed under the various fields operations of $\No$ can be
  characterised as follows:
  \begin{itemize}
  \item $\Nolambda$ is an additive subgroup of $\No$ iff
    $\lambda=\omega^\alpha$ for some ordinal $\alpha$.
  \item $\Nolambda$ is a subring of $\No$ iff
    $\lambda=\omega^{\omega^\alpha}$ for some ordinal $\alpha$.
  \item $\Nolambda$ is a subfield of $\No$ iff
    $\omega^\lambda=\lambda$.
  \label{ou}
\end{itemize}
\end{Theorem}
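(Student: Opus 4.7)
The plan is to translate each closure property into a condition on the normal forms of Theorem \ref{thm:normalForm}, using Theorem \ref{thm:serieToSignExp} to relate the length of a surreal number to the lengths of the reduced monomials $\omega^{a_i^\circ} r_i$ in its normal form, and Theorem \ref{thm:normalFormOp} to control how these normal forms combine under $+$, $\cdot$, and inversion.

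For the additive case, I would first establish that $\length{x+y}$ is bounded by something like $\length x \oplus \length y$ (Hessenberg sum). Given $x,y\in\Nolambda$ with normal forms, Theorem \ref{thm:normalFormOp} shows that the exponents of $x+y$ lie in the union of the exponent sets of $x$ and $y$, and each coefficient is a coefficient of $x$, of $y$, or a sum of two such. By Theorem \ref{thm:serieToSignExp}, the sign sequence of $x+y$ is the juxtaposition of the sign expansions of the reduced terms $\omega^{a_i^\circ}r_i$, whose lengths are controlled by Lemma \ref{lem:lengthTerm}. Summing these yields $\length{x+y}<\lambda$ as soon as $\lambda$ is closed under the ordinal sum of ordinals $<\lambda$, i.e.\ additively indecomposable, which is exactly $\lambda=\omega^\alpha$. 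For the converse, assuming $\lambda=\beta+\gamma$ with $\beta,\gamma<\lambda$, I would build explicit $x,y\in\Nolambda$ (e.g.\ $x=\beta$ viewed as an ordinal together with an appropriate infinitesimal tail, and $y$ chosen so that its leading monomial is strictly smaller than every exponent of $x$) so that the sign expansion of $x+y$ is, up to juxtaposition, the concatenation of those of $x$ and $y$, forcing $\length{x+y}\geq\beta+\gamma=\lambda$.

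For the ring case, assuming the additive case so $\lambda=\omega^\alpha$, I would analyze $xy$ via the product formula of Theorem \ref{thm:normalFormOp}: the exponents of $xy$ are sums $a_i+b_j$ and the number of terms is bounded by the product of the numbers of terms of $x$ and $y$. Applying Theorem \ref{thm:serieToSignExp} term by term, the length of $xy$ is at most a finite-by-finite juxtaposition of reduced monomial lengths, whose total length can be bounded by something of the form $\omega^{\mu+\nu}$ where $\omega^\mu,\omega^\nu<\lambda$. Closure under this operation requires $\alpha$ itself to be additively indecomposable, giving $\lambda=\omega^{\omega^\beta}$. The converse is by an explicit witness: when $\alpha=\alpha_1+\alpha_2$ with $\alpha_1,\alpha_2<\alpha$, one picks $x,y\in\Nolambda$ of length roughly $\omega^{\alpha_1}$ and $\omega^{\alpha_2}$ whose product has length $\geq\omega^{\alpha_1+\alpha_2}=\omega^\alpha=\lambda$.

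For the field case, the ring case already forces $\lambda=\omega^{\omega^\alpha}$, so I would only need to examine closure under inverse. Writing $x\in\Nolambda$ in normal form as $x=r_0\omega^{a_0}(1+\varepsilon)$ with $\varepsilon$ infinitesimal, the inverse has the geometric-series expansion whose exponent support lies in $-a_0+\langle\supp\varepsilon\rangle$, where $\langle\cdot\rangle$ denotes the monoid generated. By Proposition \ref{prop:orderTypeMonoid}, this monoid has order type bounded by $\omega^{\omega\cdot\length x}$. Combined with the sign-expansion bounds, $x^{-1}\in\Nolambda$ whenever $\length x<\lambda$ precisely when $\lambda$ is closed under $\mu\mapsto\omega^\mu$, that is, when $\omega^\lambda=\lambda$. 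The converse constructs, for $\lambda=\omega^{\omega^\alpha}$ with $\omega^\lambda>\lambda$, a series $x$ of length close to $\lambda$ whose support generates a monoid of order type $\geq\lambda$, yielding $x^{-1}\notin\Nolambda$.

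The main obstacle will be quantitative: one must carry around simultaneously the number of terms of the normal form and the lengths of the individual reduced sign expansions from Theorem \ref{thm:serieToSignExp} (which use the somewhat delicate reduction of Definition \ref{def:reducedSignExpansion}), and ensure the bounds are tight enough so that the counterexamples in each ``only if'' direction genuinely leave $\Nolambda$. The field case is the subtlest, since the inverse formula blows up exponents through a full generated monoid, and only Proposition \ref{prop:orderTypeMonoid} prevents this blow-up from exceeding $\lambda$ when $\lambda$ is an $\epsilon$-number.
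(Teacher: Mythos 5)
First, note that the paper itself does not prove this theorem: it is imported verbatim from van den Dries--Ehrlich \cite{DriesEhrlich01,van2001erratum}, so there is no in-paper argument to match, and your sketch has to be judged as a reconstruction of the original proof. Judged that way, it has a genuine gap at the very first step. The direct implication of the additive item does not follow from the bookkeeping you describe. From Theorem \ref{thm:serieToSignExp} and Lemma \ref{lem:lengthTerm} you can bound each merged term of $x+y$ by (the length of the corresponding term of $x$ or of $y$) inflated by the real-coefficient block, which costs a factor of the order of $\omega^{|a_i|_+}\otimes\omega$, and you must then sum over up to $\length x\oplus\length y$ many terms; this yields a product-type bound such as $(\length x\oplus\length y)\otimes\max(\length x,\length y)\otimes\omega$, which is \emph{not} below $\lambda$ when $\lambda$ is additively but not multiplicatively indecomposable (already for $\lambda=\omega^{2}$), i.e.\ exactly in the cases that separate the first item from the second. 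What is really needed is the sharp inequality $\length{x+y}\leq\length x\oplus\length y$, and in \cite{DriesEhrlich01} this is proved by induction on simplicity directly from the recursive definition of $+$, not via normal forms; recovering it through normal forms would require controlling the reduced sign expansions of Definition \ref{def:reducedSignExpansion} for the merged exponent sequence (the reduction of an exponent of $x+y$ is taken relative to \emph{all} earlier exponents of $x+y$, and the dyadicity clause can change when coefficients add), none of which your sketch addresses; ``summing these yields $\length{x+y}<\lambda$'' is precisely the point at issue.

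There are also smaller misstatements. In the ring case the number of terms of $xy$ is $\nu_x\otimes\nu_y$, which is in general infinite, so ``finite-by-finite juxtaposition'' is wrong as written (the product is absorbed only because $\lambda$ is multiplicative, and again only once the additive $\oplus$-bound is available). For the converses of the first two items you do not need infinitesimal tails: ordinals suffice, since the surreal sum and product of ordinals are the Hessenberg operations and the length of an ordinal is itself, so failure of additive (resp.\ multiplicative) indecomposability immediately produces $x,y\in\Nolambda$ with $x+y\notin\Nolambda$ (resp.\ $xy\notin\Nolambda$). In the field case your direct direction (geometric series, Proposition \ref{prop:orderTypeMonoid}, and closure of $\epsilon$-numbers under ordinal arithmetic) is the right argument, but the converse witness is misdescribed: the point is not to take ``$x$ of length close to $\lambda$'' but to take $x=1-\varepsilon$ with $\varepsilon$ of small length whose positive support is a Neumann-type set (exponents accumulating from below) chosen so that the generated monoid has order type at least $\lambda$, with coefficients arranged (e.g.\ all positive) so that no cancellation occurs in $\sum_k\varepsilon^k$; then $\supp(x^{-1})$ has order type at least $\lambda$, and since every term contributes at least one sign, $\length{x^{-1}}\geq\lambda$. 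As it stands, the proposal needs the additive length lemma supplied by a different method and the field-case witness made precise before it can be considered a proof.
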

The ordinals $\lambda$ satisfying first (respectively: second) item
are often said to be additively (resp. multiplicatively)
indecomposable but for the sake of brevity we shall just call them
\textbf{additive} (resp. \textbf{multiplicative}). Multiplicative ordinals are exactly
the ordinals $\lambda>1$ such that $\mu \nu <\lambda$ whenever
$\mu,\nu < \lambda$. The ordinal satisfying third item are called
\textbf{$\epsilon$-numbers}. The smallest $\epsilon$-number is usually denoted
by $\epsilon_0$ and is given by
$$\epsilon_0:=\sup\{\omega,\omega^\omega,\omega^{\omega^\omega},\dots\}.$$

\begin{Remark} \label{rqdouze}
	Since rational numbers have length at most $\omega$, we have that if $\lambda$ is multiplicative, then $\Nolt\lambda$ is a divisible group.
\end{Remark}

If $\lambda$ is an $\epsilon$-number, $\Nolambda$ is actually more than only a field: 

\begin{Theorem}[\cite{DriesEhrlich01,van2001erratum}]
Let $\lambda$ be any $\epsilon$-number. Then 
  $\Nolambda$ is a real closed field. 
\end{Theorem}

\subsection{Subfields defined from Hahn's series representation}
 
%
%


As we will often play with exponents of formal power series consided in the Hahn series, we propose to introduce the following notation: 

\begin{Definition}
	If $\lambda$ is an $\epsilon$-number and $\Gamma$ a divisible Abelian group, we denote  $$\SRF{\lambda}{\Gamma}=\HahnFieldOrd\Rbb{\Gamma}\lambda$$
\end{Definition}
As a consequence of Proposition \ref{prop:hahnFieldRealClosed} we have 

\begin{Corollary}
$\SRF\lambda{\Nolt\mu}$ is a real-closed field when $\mu$ is a multiplicative ordinal and $\lambda$ an $\epsilon$-number. 
\end{Corollary}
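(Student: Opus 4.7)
The plan is to apply Proposition \ref{prop:hahnFieldRealClosed} directly with $\K = \R$ and $G = \Nolt\mu$ and $\gamma = \lambda$, after checking that the hypotheses of that proposition are satisfied in our setting.

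First I would verify that $\Nolt\mu$ is a divisible ordered abelian group. Since $\mu$ is a multiplicative ordinal, it is in particular of the form $\omega^\alpha$, so by Theorem \ref{thm:structureNolambda} (first bullet), $\Nolt\mu$ is an additive subgroup of $\No$. The order on $\No$ restricts to a total order on $\Nolt\mu$ compatible with addition, making it an ordered abelian group. Divisibility is exactly the content of Remark \ref{rqdouze}: since $\mu$ is multiplicative (hence in particular additive), rational multiples of elements of $\Nolt\mu$ stay within $\Nolt\mu$, so $\Nolt\mu$ is divisible.

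Next I would recall that $\R$ is a real closed field of characteristic $0$, and that $\lambda$ is an $\epsilon$-number by hypothesis. With these ingredients in hand, Proposition \ref{prop:hahnFieldRealClosed} applies: $\HahnFieldOrd{\R}{\Nolt\mu}{\lambda}$ is a real closed field. By the definition of $\SRF{\lambda}{\Nolt\mu}$ introduced just above the statement, this is exactly $\SRF{\lambda}{\Nolt\mu}$, which concludes the proof.

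There is essentially no obstacle here: the corollary is a direct instantiation of the previously quoted proposition, and the only verification required is the divisibility of $\Nolt\mu$, which has already been recorded as a remark. The more substantive work — that restricting the support length by an $\epsilon$-number preserves the field structure and real-closedness — is absorbed into Proposition \ref{prop:hahnFieldRealClosed} itself (whose proof traces back to Maclane's generalized Newton--Puiseux theorem together with the order type bound of Proposition \ref{prop:orderTypeMonoid}).
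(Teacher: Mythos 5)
Your proof is correct and follows essentially the same route as the paper, which states the corollary as a direct consequence of Proposition \ref{prop:hahnFieldRealClosed} (itself resting on MacLane's theorem); your only addition is the explicit check, via Theorem \ref{thm:structureNolambda} and Remark \ref{rqdouze}, that $\Nolt\mu$ is a divisible ordered abelian group, which the paper leaves implicit.
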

This fields are somehow the atoms constituting the fields $\Nolambda$.

\thEhrlichquatresept*


\begin{Remark}
The fact that if $\lambda$ is not a regular cardinal, then $\Nolambda \neq\SRF\lambda{\Nolt\lambda}$ can be seen as follows: Suppose that $\lambda$ is not a regular cardinal. This means that we can take some strictly increasing sequence $(\mu_{\alpha})_{\alpha < \beta}$ that is cofinal in $\lambda$ with $\beta<\lambda$. Then $\sum_{\alpha <\beta} \omega^{-\mu_{\alpha}}$ is in $\SRF{\lambda}{\Nolt\lambda}$ by definition, but is not in $\Nolambda$.
\end{Remark}

\section{Exponentiation and logarithm}
\label{sec:expoLn}

\subsection{Gonshor's exponentiation}


The field surreal numbers $\No$ admits an exponential function $\exp$ defined as follows.

\begin{Definition}[{Function $\exp$, \cite[page 145]{gonshor1986introduction}}]
	Let $x = \crotq{x'}{x''}$ be the canonical representation of $x$. We define inductively
	$$\exp x = \crotq {0, \exp(x')[x-x']_{n}, \exp(x'')[x-x'']_{2n+1}} {\f{\exp(x')}{[x'-x]_{2n+1}}, \f{\exp(x'')}{[x''-x]_{2n+1}}}$$
	where $n$ ranges in $\Nbb$ and
	$$[x]_{n}  = 1+ \frac{x}{1!} + \dots + \frac{x}{n!},$$
	with the further convention that the expressions containing terms of the form
	$[y]_{2n+1}$ are to be considered only when $[y]_{2n+1} > 0$.
\end{Definition}
It can be shown that the function $\exp$ is a surjective homomorphism from $(\No, +)$ to $(\No^{>0}, ·)$ which extends $\exp$ on $\R$ and makes $(\No, +, ·, \exp)$ into an elementary extension of $(\R,+,·,\exp)$ (see \cite[Corollaries 2.11 and 4.6]{van1994elementary}, \cite{DriesEhrlich01} and \cite{ressayre1993integer}). As $\exp$ is surjective, and from its properties, it can be shown that it has some inverse $\ln : \No^{>0} \to \No$ (called logarithm).

\begin{Definition}[Functions $\log$, $\log_{n}$, $\exp_{n}$]
 Let $\ln : \No^{>0} \to \No$ (called logarithm) be the inverse of $\exp$. We let $\exp_{n}$ and $\ln_{n}$ be the $n$-fold iterated compositions of $\exp$ and $\ln$ with
themselves.
\end{Definition}

We recall some other basic properties of the exponential functions:

\begin{Theorem}[{\cite[Theorems 10.2, 10.3 and 10.4]{gonshor1986introduction}}]
	\label{thm:expAppreciables}
	For all $r\in\Rbb$ and $\epsilon$ infinitesimal, we have 
	$$\exp r = \Sum{k=0}\infty{\f{r^k}{k!}} \qandq \exp \epsilon = \Sum{k=0}{\infty}\f{\epsilon^k}{k!} \qandq
		\exp(r+\epsilon) = \exp(r)\exp(\epsilon) = \Sum{k=0}{\infty}\f{(r+\epsilon)^k}{k!}$$
	Moreover for all purely infinite number $x$,
		$$\exp(x+r+\epsilon) = \exp(x)\exp(r+\epsilon)$$
\end{Theorem}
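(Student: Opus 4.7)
The plan is to establish the four identities of Theorem \ref{thm:expAppreciables} in order, using the cut definition of $\exp$ together with an induction on simplicity, and deriving the multiplicative identities from a separate homomorphism lemma.

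First, for $r \in \Rbb$ I would prove $\exp r = \sum_{k=0}^{\infty} r^k/k!$ by induction on the length of $r$. For dyadic $r$ with canonical cut $r = \crotq{r'}{r''}$ (simpler dyadics on either side), the cut definition immediately gives inequalities of the form $\exp(r')[r-r']_n \leq \exp r \leq \exp(r')/[r'-r]_{2n+1}$, together with the symmetric bounds coming from $r''$. The induction hypothesis identifies $\exp(r')$ and $\exp(r'')$ with their real counterparts, and the classical Taylor estimates for the real exponential then squeeze $\exp r$ onto the real exponential of $r$. For an arbitrary $r = \crotq{A}{B}$ with $A, B$ cofinal (resp.\ coinitial) sets of dyadics, the same squeeze argument extends the identity by cofinality.

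Next, for $\epsilon$ infinitesimal, I first observe that $s_\epsilon := \sum_{k=0}^{\infty} \epsilon^k/k!$ is a well-defined Hahn series: since $\epsilon \prec 1$, the leading monomials of the powers $\epsilon^k$ form a strictly decreasing sequence in the $\asymp$-ordering, so the union of their supports is well-ordered. I then proceed by induction on the length of $\epsilon$, written canonically as $\epsilon = \crotq{\epsilon'}{\epsilon''}$ where all options are infinitesimal (or zero). The induction hypothesis yields $\exp \epsilon' = s_{\epsilon'}$ and similarly for $\epsilon''$. The key algebraic identity is $s_\epsilon = s_{\epsilon'} \cdot s_{\epsilon - \epsilon'}$ as formal Hahn series, obtained by the binomial theorem applied term by term, which is legitimate because the relevant quantities are near $1$. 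Verifying that $s_\epsilon$ lies above every left option and below every right option of the cut reduces, through this factorization, to controlling the truncation error $s_{\epsilon - \epsilon'} - [\epsilon - \epsilon']_n$ and its reciprocal versions --- precisely what the $[\cdot]_n$ and $[\cdot]_{2n+1}$ partial sums in Gonshor's definition are designed for. Uniqueness of the cut then gives $s_\epsilon = \exp \epsilon$.

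The third and fourth identities rest on multiplicativity $\exp(x+y) = \exp x \cdot \exp y$, which I would prove as a separate lemma by a double induction on the simplicities of $x$ and $y$, using that $[a]_n \cdot [b]_n$ bounds $[a+b]_n$ from above while an odd-order partial sum bounds it from below, with tails controlled by the $n \to \infty$ cofinality built into the cut. Given multiplicativity, $\exp(r+\epsilon) = \exp(r)\exp(\epsilon)$, and multiplying the two series $\exp r \in \Rbb^{>0}$ and $s_\epsilon$ term by term and reapplying the binomial theorem recovers $\sum_{k=0}^{\infty}(r+\epsilon)^k/k!$; the purely infinite identity $\exp(x+r+\epsilon) = \exp(x)\exp(r+\epsilon)$ is then one more application of multiplicativity. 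The main obstacle is the infinitesimal case: certifying that $s_\epsilon$ is a genuine Hahn series relies crucially on $\epsilon$ being infinitesimal (so powers have strictly shrinking Archimedean classes), and matching the Hahn-series limit against each of the four families in the cut requires careful bookkeeping of which $[\cdot]_{2n+1}$ values are actually positive --- a side condition explicitly imposed by Gonshor's definition that must be checked in every clause of the induction.
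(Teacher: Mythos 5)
The paper does not actually prove this statement: it is imported verbatim from Gonshor (Theorems 10.2, 10.3 and 10.4 of his book), so your attempt can only be measured against Gonshor's original development, which your outline broadly follows (cut definition of $\exp$, induction on simplicity, the partial sums $[x]_n$). There are, however, two genuine gaps. The first is your claim that an infinitesimal $\epsilon$, written canonically as $\epsilon=\crotq{\epsilon'}{\epsilon''}$, has all options infinitesimal or zero. This is false: the canonical representation of $1/\omega$ is $\crotq{0}{\enstq{2^{-n}}{n\in\Nbb}}$, whose right options are the dyadics $2^{-n}$, not infinitesimals. You can pass to a representation by infinitesimals (e.g. $1/\omega=\crotq{0}{2/\omega}$) and invoke uniformity of Gonshor's definition of $\exp$, but those options are no longer simpler than $\epsilon$, so the induction on the length of $\epsilon$ that carries your whole second step has nothing left to recurse on. The standard repair is either to treat the full appreciable case $r+\epsilon$ in one induction (options of an appreciable number are appreciable, and the real case you proved first feeds the hypothesis), or to follow Gonshor's order and deduce the infinitesimal series after the homomorphism property.

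That points to the second gap: the identity $\exp(x+y)=\exp(x)\exp(y)$ is by far the hardest of the three cited results (Gonshor's Theorem 10.2, proved before 10.3 and 10.4), and in your plan it is the lemma from which the third and fourth displayed identities are derived, yet it is only waved at. The inequality you invoke, that $[a]_n[b]_n$ dominates $[a+b]_n$ while an odd-order partial sum bounds it below, is only valid as stated for positive arguments; the general case requires the positivity side conditions on the $[\cdot]_{2n+1}$ terms and a case analysis over all four families of options of $x$ and $y$ (and over the signs of $x-x'$, $x''-x$, etc.), which is precisely where the real work in Gonshor's proof lies. As written, the proposal in effect assumes its main lemma, so the multiplicative identities $\exp(r+\epsilon)=\exp(r)\exp(\epsilon)$ and $\exp(x+r+\epsilon)=\exp(x)\exp(r+\epsilon)$ are not yet established by the argument you give.
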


\begin{Proposition}[{\cite[Theorem 10.7]{gonshor1986introduction}}]
	\label{prop:formeExpXPurelyInfiniteOmegaA}
	If $x$ is purely infinite, then $\exp x=\omega^a$ for some surreal number $a$. 
\end{Proposition}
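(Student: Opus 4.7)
The plan is to prove the proposition by transfinite induction on $\length(x)$, showing simultaneously that one can identify an explicit candidate $a$ built from the normal form of $x$. First I would write down Gonshor's defining formula
$$\exp x = \left[\,0,\ \exp(x')[x-x']_n,\ \exp(x'')[x-x'']_{2n+1}\ \bigm|\ \frac{\exp(x')}{[x'-x]_{2n+1}},\ \frac{\exp(x'')}{[x''-x]_{2n+1}}\,\right]$$
for the canonical representation $x = [x'\mid x'']$, and compare it with the canonical form $\omega^a = [\,0,\ m\omega^{a'}\ \mid\ \omega^{a''}/2^m\,]$. The aim is to show the two right-hand sides determine the same surreal number, via cofinality of the generating sets.

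The first reduction is to replace the generic simpler $x', x''$ by purely infinite ones. Writing $x = \sum_{i<\nu} r_i\omega^{a_i}$ in normal form (with all $a_i > 0$), natural cofinal (resp.\ coinitial) families of purely infinite surreals strictly below (resp.\ above) $x$ are obtained from truncations of this normal form, adjusted on the last coefficient so as to land on the correct side of $x$. Any simpler $x'$ that is not purely infinite decomposes as $\tilde x' + r + \epsilon$ with $\tilde x'$ purely infinite, $r\in\R$, $\epsilon$ infinitesimal; by Theorem~\ref{thm:expAppreciables}, $\exp(x') = \exp(\tilde x')\exp(r+\epsilon)$, so the effect of $r$ and $\epsilon$ is only a positive appreciable multiplicative factor, which does not alter the Archimedean class of the terms $\exp(x')[x-x']_n$ or $\exp(x'')/[x'-x]_{2n+1}$. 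This lets me restrict attention, up to cofinality, to purely infinite $x', x''$.

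Second I would apply the inductive hypothesis to every such purely infinite $x'$ to obtain $\exp(x') = \omega^{b'}$, and likewise $\exp(x'') = \omega^{b''}$. Since $x-x'$ and $x''-x$ are then themselves purely infinite, the polynomials $[x-x']_n$ and $[x''-x]_{2n+1}$ are dominated by their leading term, so each product $\exp(x')[x-x']_n$ is asymptotic (in Archimedean class) to a monomial $\omega^{b' + n\gamma}$ where $\gamma$ is the leading exponent of $x-x'$; as $n$ varies, these form a family cofinal with $\{m\omega^{a'}\}_{m\in\mathbb{N}}$ for an $a'$ assembled canonically from the $b'$ obtained by the induction. A symmetric analysis shows the right set is coinitial with $\{\omega^{a''}/2^m\}$. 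Matching these with the canonical representation of $\omega^a$ and invoking the simplicity theorem concludes $\exp(x) = \omega^a$.

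The main obstacle is precisely the cofinality step: unlike $+$ and $\cdot$, Gonshor's formula for $\exp$ is not uniform in the sense of \cite[page 15]{gonshor1986introduction}, so the reduction from the canonical $\{x'\}, \{x''\}$ to purely infinite cofinal subfamilies needs careful justification. The key tool is Theorem~\ref{thm:expAppreciables}, which supplies the absorption of finite additive corrections into positive appreciable multiplicative factors; the detailed bookkeeping to then verify that the resulting sets are still cofinal and coinitial with the full sets in the original formula is the most delicate part of the argument.
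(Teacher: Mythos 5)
There is a genuine gap, and it sits exactly where you place the weight of the argument. Note first that the paper offers no proof of this statement: it is quoted from \cite[Theorem 10.7]{gonshor1986introduction}, so the comparison is with Gonshor's argument. Your plan is to exhibit $\exp x$ as equal to a bracket representation of $\omega^a$ by cofinality, but this presupposes that you already have the exponent $a$ in hand: the step ``an $a'$ assembled canonically from the $b'$ obtained by the induction'' is precisely the content of the much stronger Theorem 10.13 of Gonshor (the function $g$ of Proposition~\ref{prop:formeExpXPurelyInfiniteOmegaAg}), which is proved \emph{after} 10.7 and with substantial extra work; your sketch neither says how this assembly is done nor why, once done, the families $\exp(x')[x-x']_n$ and $m\omega^{a'}$ (over all lower prefixes $a'$ of $a$) are mutually cofinal. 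The statement to be proved only asserts existence of $a$, and the standard way to get it without ever naming $a$ is Gonshor's characterization \cite[Theorem 5.3]{gonshor1986introduction} (quoted in the paper): it suffices to show that every lower option in the defining representation of $\exp x$ lies in a strictly smaller Archimedean class than $\exp x$ and every upper option in a strictly larger one; then any positive $y\asymp\exp x$ lies between the options, so $\exp x\sqsubseteq y$, i.e.\ $\exp x$ is the simplest positive element of its class and hence an $\omega$-power. That argument needs two facts your sketch never establishes: (i) for purely infinite $x$ and any strict prefix $x'$, the difference $x-x'$ is infinite (this follows from Theorem~\ref{thm:serieToSignExp}: the purely infinite part of a surreal is a prefix of it, so a finite difference would force $x\sqsubseteq x'\sqsubset x$); and (ii) $\exp z\succ z^{n}$ for all $n$ when $z$ is positive infinite, so that $\exp(x')[x-x']_n=\exp(x)\,[x-x']_n/\exp(x-x')\prec\exp x$ and dually for the upper options. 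Without (ii) in particular, no amount of cofinality bookkeeping closes the argument.

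Two secondary points. The obstacle you single out at the end is misplaced: Gonshor does establish a uniformity property for his definition of $\exp$ (subject to the positivity convention on $[y]_{2n+1}$), so non-uniformity is not what makes the reduction delicate; what is needed is the inequality machinery above. And your induction on $\length(x)$ does not apply to the ``truncations adjusted on the last coefficient'': such numbers are in general not simpler than $x$ and can even be longer (changing a dyadic coefficient to a non-dyadic real lengthens the sign sequence), so the inductive hypothesis gives you nothing about them; you would have to work instead with the purely infinite parts of the prefixes $x'$, $x''$, which \emph{are} prefixes of $x$ by Theorem~\ref{thm:serieToSignExp}, and which stay on the correct side of $x$ by fact (i). With those repairs your outline essentially collapses onto Gonshor's proof, which is shorter because it never tries to identify $a$.
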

More precisely:
\begin{Proposition}[Function $g$, {\cite[Theorem 10.13]{gonshor1986introduction}}]
	\label{prop:formeExpXPurelyInfiniteOmegaAg}
	If $x$ is purely infinite, \textit{i.e.} $x=\Sum{i<\nu}{}r_i\omega^{a_i}$ with $a_i>0$ for all $i$, then 
	$$\exp x=\omega^{\Sum{i<\nu}{}r_i\omega^{g(a_i)}},$$
	for some function $g: \No^{>0 }\to \No$. 	Function $g$ satisfies for all $x$, 
	$$g(x) = \crotq{c(x),g(x')}{g(x'')}$$
	where $c(x)$ is the unique number such that $\omega^{c(x)}$ and $x$ are in the same Archimedean class \cite[Thm. 10.11]{gonshor1986introduction} (i.e. such that 
	$x\asymp\omega^{c(x)}$),  where $x'$ ranges over the lower non-zero prefixes of $x$ and $x''$ over the upper prefixes of $x$.
\end{Proposition}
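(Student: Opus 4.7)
The plan is to argue by transfinite induction on the simplicity of $x$, handling the monomial case $x = r\omega^{a}$ with $r \in \Rbb_{>0}$ and $a > 0$ in detail and then assembling the general purely infinite case. Since $\exp$ is a group morphism from $(\No,+)$ to $(\No^{>0},\cdot)$ and sends purely infinite numbers to monomials by Proposition~\ref{prop:formeExpXPurelyInfiniteOmegaA}, once one has $\exp(r_i\omega^{a_i}) = \omega^{r_i\omega^{g(a_i)}}$ for each term of the normal form of $x$, the formula for $\exp x$ follows from $\omega^{u}\omega^{v} = \omega^{u+v}$. The transfinite iteration of this product needs a short separate argument, but is straightforward once one observes that $g$ is order-preserving on the exponents $a_i$, so the $g(a_i)$ stay strictly decreasing and the resulting formal sum remains in normal form.

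For the monomial case, I would factor $\exp(r\omega^a) = (\exp\omega^a)^r$ (valid for $r$ a positive real) and reduce to computing $\exp\omega^a$. I would then expand Gonshor's uniform definition of $\exp\omega^a$ using the representation $\omega^a = \crotq{n\omega^{a'}}{\frac{1}{2^n}\omega^{a''}}$: the lower set collects $0$ together with terms $\exp(n\omega^{a'})[\omega^a - n\omega^{a'}]_k$, and the upper set collects terms $\exp(\frac{1}{2^n}\omega^{a''})/[\frac{1}{2^n}\omega^{a''} - \omega^a]_{2k+1}$. Applying the induction hypothesis $\exp(n\omega^{a'}) = \omega^{n\omega^{g(a')}}$ and its upper counterpart, then comparing cofinally with the canonical representation of $\omega^{\omega^{g(a)}}$ coming from the definition of $\omega$-exponentiation, one verifies that the simplest surreal compatible with both representations is indeed $\omega^{\omega^{g(a)}}$, where $g(a)$ is taken as the simplest number strictly above $\{c(a), g(a')\}$ (for all lower non-zero prefixes $a'$) and strictly below $\{g(a'')\}$ (for all upper prefixes $a''$) — which is exactly the recursion in the statement.

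The appearance of $c(a)$ in the recursion is precisely what is needed to dominate the polynomial terms $[\omega^a - x']_k$ expanded in the definition of $\exp$: after expansion these contribute archimedean bounds of order $\omega^{ka}$ for each integer $k$, so $\omega^{g(a)}$ must lie strictly above all such bounds, which forces $g(a) > c(a)$. Well-definedness of the recursion — that the left set is strictly below the right set — then reduces by induction to monotonicity of $c$ and $g$ on prefixes of $a$, which is routine bookkeeping.

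The main obstacle will be the combinatorial matching of cofinalities between the two representations of $\exp\omega^a$: showing that the expanded lower terms are cofinal in the lower canonical representation of $\omega^{\omega^{g(a)}}$, and similarly for the upper side, requires expanding the truncated exponential series $[\omega^a - n\omega^{a'}]_k$, identifying which $\omega^{\omega^{g(a')}}$-multiples they produce modulo the induction hypothesis, and verifying they approach $\omega^{\omega^{g(a)}}$ from below. Once this matching is settled — the technical heart of Gonshor's Theorem~10.13 — monotonicity of $g$, well-definedness of the recursion, and the extension from monomials to arbitrary purely infinite numbers all follow cleanly from the inductive framework.
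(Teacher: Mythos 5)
The paper does not prove this proposition: it is imported verbatim from Gonshor (Theorems 10.11 and 10.13 of \cite{gonshor1986introduction}), so there is no in-paper argument to compare yours against; what follows assesses your sketch on its own terms. Your overall plan---induction on simplicity, matching the cut in Gonshor's definition of $\exp\omega^a$ against the cut defining $\omega^{\omega^{g(a)}}$, with $c(a)$ entering precisely because the truncations $[\omega^a-x']_k$ lie in the Archimedean class of $\omega^{ka}$ and must be dominated, forcing $g(a)>c(a)$---is the right shape, and your reading of the role of $c$ is correct.

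However, two steps are asserted exactly where the real content lies, and as written they do not go through. First, the reduction $\exp(r\omega^a)=(\exp\omega^a)^r$ for an arbitrary positive real $r$ is not available at this stage: real powers of surreals are only defined through $\exp$ and $\ln$ themselves, and the natural substitute---squeezing by rational multiples, where the identity does follow from the homomorphism property and uniqueness of roots---only confines the exponent $b$ of the monomial $\exp(r\omega^a)=\omega^{b}$ (Proposition \ref{prop:formeExpXPurelyInfiniteOmegaA}) to satisfy $q_1\omega^{g(a)}<b<q_2\omega^{g(a)}$ for all rationals $q_1<r<q_2$, which determines $b$ only up to an error $\prec\omega^{g(a)}$. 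You need a genuine cut computation for $r\omega^a$ (or a simultaneous induction over all real coefficients), not a formal power identity. Second, the passage from single terms to a general purely infinite $x=\sum_{i<\nu}r_i\omega^{a_i}$ cannot be done by ``iterating the product'': when $\nu$ is a limit ordinal, $x$ is not an iterated sum of its terms but is itself defined by a cut, so $\exp x$ must again be identified by a cofinality argument of the same nature as the monomial case; the monotonicity of $g$ that you invoke only guarantees that the claimed expression is a legal normal form, not that it equals $\exp x$. Since in addition you explicitly defer the cofinality matching that you yourself call the technical heart, the proposal stands as a plausible road map for Gonshor's proof rather than a proof.
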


\subsection{About some properties of function $g$}

\begin{Proposition}[{\cite[Theorem 10.14]{gonshor1986introduction}}]
	\label{prop:gord}
	If $a$ is an ordinal number then
	$$ g(a)=\begin{accolade}
		a+1 & \text{if } \lambda\leq a <\lambda+\omega \text{ for some }\epsilon\text{-number }\lambda\\
		a& \text{otherwise}
	\end{accolade}$$
\end{Proposition}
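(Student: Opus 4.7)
The plan is to argue by transfinite induction on the ordinal $a \geq 1$, applying the recursive formula for $g$ from Proposition~\ref{prop:formeExpXPurelyInfiniteOmegaAg}. Since the sign expansion of an ordinal $a$ consists of $a$ consecutive pluses, its lower non-zero prefixes are exactly the ordinals $a'$ with $0 < a' < a$, and it has no upper prefix. The defining bracket therefore collapses to
$$g(a) = \crotq{\{c(a)\}\cup\{g(a') : 0 < a' < a\}}{\emptyset},$$
and the induction hypothesis makes every entry of the left set an ordinal, so $g(a)$ is itself the least ordinal strictly above each of them.

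The next step is to identify $c(a)$ explicitly. Writing $a$ in Cantor normal form $a = \omega^{\alpha_1}n_1 + \cdots + \omega^{\alpha_k}n_k$ with $\alpha_1 > \cdots > \alpha_k$, we have $a \asymp \omega^{\alpha_1}$, so $c(a) = \alpha_1$. This leading exponent satisfies $\alpha_1 < a$ in general, with equality precisely when $a$ is itself an $\epsilon$-number.

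The case analysis then matches the statement. If $a$ lies in no interval $[\lambda, \lambda + \omega)$ with $\lambda$ an $\epsilon$-number, then in particular $a$ is not an $\epsilon$-number, so $c(a) < a$; and by induction $g(a') \in \{a', a' + 1\}$ for every $0 < a' < a$, with $g(a') = a' + 1$ occurring only when $a' = \lambda' + m$ for some $\epsilon$-number $\lambda'$, in which case the hypothesis on $a$ forces $a' + 1 < a$ (otherwise $a$ itself lands in the forbidden interval). All entries of the left set are thus strictly below $a$, and a short split between successor and limit $a$ shows the simplest strict upper bound is exactly $a$. In the opposite case $a = \lambda + n$, one has $c(a) = \lambda$ (using $\omega^\lambda = \lambda$), while induction gives $g(\lambda + m) = \lambda + m + 1$ for every $m < n$ and $g(a') < \lambda$ for $0 < a' < \lambda$ (the latter using that $\epsilon$-numbers are limit ordinals). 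Hence the left set attains the value $a$ itself, via $c(\lambda) = \lambda$ when $n = 0$ or via $g(a - 1) = a$ when $n \geq 1$, and therefore $g(a) = a + 1$.

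The main obstacle I expect is tracking precisely when a member of the left set reaches $a$: this threshold is exactly what separates the two output values $a$ and $a + 1$. The delicate point in the generic case is to exclude $g(a - 1) = a$ when $a$ is a successor, which reduces to observing that $a - 1 = \lambda + m$ would push $a$ into the exceptional form $\lambda + (m + 1)$. Once this bookkeeping is handled, the remainder is routine manipulation of the Cantor normal form of $a$ together with the defining identity $\omega^\lambda = \lambda$ of $\epsilon$-numbers.
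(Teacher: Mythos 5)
Your argument is correct. Note that the paper does not prove this proposition at all: it simply cites Gonshor's Theorem 10.14, so there is no internal proof to compare against; your write-up is a self-contained substitute, and it follows the natural route (essentially the one in Gonshor's book): induct on the ordinal $a$, observe that the all-plus sign expansion has only lower non-zero prefixes, namely the ordinals $0<a'<a$, so the recursion of Proposition \ref{prop:formeExpXPurelyInfiniteOmegaAg} collapses to $g(a)=\crotq{\{c(a)\}\cup\{g(a'):0<a'<a\}}{\emptyset}$, which (all entries being ordinals by induction) is the least ordinal strictly above the left set; then identify $c(a)$ as the leading Cantor normal form exponent, equal to $a$ exactly when $a$ is an $\epsilon$-number. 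Your bookkeeping at the threshold is the right delicate point and you handle it correctly: in the generic successor case $a=b+1$ one must rule out $g(b)=b+1$, which would force $b=\lambda+m$ and hence $a=\lambda+(m+1)$ into the excluded interval, while in the case $a=\lambda+n$ the left set actually attains $a$ (via $c(a)=\lambda$ when $n=0$, via $g(\lambda+(n-1))=\lambda+n$ when $n\geq 1$), giving $g(a)=a+1$; the limit case is automatic since $g(a')\leq a'+1<a$. Two small points worth a line each in a polished version: the base case $a=1$ (no non-zero prefixes, so $g(1)=\crotq{0}{\emptyset}=1$), and the justification that $\crotq{A}{\emptyset}$ for a set $A$ of ordinals is the least ordinal exceeding $A$ (ordinals are exactly the surreals with a representation with empty right option set, and simplicity among ordinals coincides with order), which the paper's background section already supplies.
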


Note that in the previous proposition, $a\neq 0$ since $g$ is defined only for positive elements.

\begin{Proposition}[{\cite[Theorem 10.15]{gonshor1986introduction}}]
	\label{prop:gMonomeInfinitesimal}
	Let $n$ be a natural number and $b$ be an ordinal. We have
	$ g(2^{-n}\omega^{-b})=-b+2^{-n}.$
\end{Proposition}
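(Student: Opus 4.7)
The plan is to proceed by transfinite induction on the ordinal $\omega b+n$ (equivalently, lexicographically on $(b,n)$). The base case $b=n=0$ asserts $g(1)=1$, which is immediate from Proposition~\ref{prop:gord} since $1$ is an ordinal not lying in any interval $[\lambda,\lambda+\omega)$ with $\lambda$ an $\epsilon$-number.

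For the inductive step, I would first apply Theorem~\ref{thm:serieToSignExp} to determine the sign expansion of $x=2^{-n}\omega^{-b}$. Since $-b$ has $b$ minuses and zero pluses, $\omega^{-b}$ has sign expansion consisting of a single $+$ followed by $\omega b$ minuses; multiplying by $2^{-n}$ appends $\omega^{0}\cdot n=n$ further minuses, so $x$ has sign expansion $+$ followed by $\omega b+n$ minuses. Every proper non-zero prefix of $x$ is therefore $+$ followed by $k$ minuses for some $0\le k<\omega b+n$, and is strictly greater than $x$, so $x$ has no lower non-zero prefix. Writing $k=\omega\beta+m$ in Cantor form with $m\in\Nbb$, the same theorem identifies this prefix with $2^{-m}\omega^{-\beta}$. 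Combined with $c(x)=-b$ (since $x\asymp\omega^{-b}$), Proposition~\ref{prop:formeExpXPurelyInfiniteOmegaAg} and the induction hypothesis then yield
$$g(x)=\crotq{-b}{-\beta+2^{-m}\ :\ \omega\beta+m<\omega b+n,\ m\in\Nbb}.$$

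It remains to verify that $-b+2^{-n}$ is the simplest surreal in this cut. Membership is straightforward: $-b+2^{-n}>-b$ is clear, and for any permitted $(\beta,m)$ the difference $(-\beta+2^{-m})-(-b+2^{-n})=(b-\beta)+(2^{-m}-2^{-n})$ is positive, either because $\beta<b$ forces $b-\beta\ge 1>|2^{-n}-2^{-m}|$, or because $\beta=b$ and $m<n$ make $2^{-m}>2^{-n}$.

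The main obstacle is the simplicity claim: no surreal strictly simpler than $-b+2^{-n}$ may lie in the cut. I would handle it by computing the sign expansion of $-b+2^{-n}$ via Theorem~\ref{thm:serieToSignExp}, splitting into the cases $b$ infinite and $b$ finite. When $b$ is infinite, the Cantor normal form of $-b$ contributes $b$ minuses and the additional term $\omega^0\cdot 2^{-n}$ contributes $+$ followed by $n$ minuses, giving a sign expansion of length $b+1+n$; each proper prefix is then either some $-\beta$ with $\beta\le b$ (hence $\le -b$, or else $\ge$ an upper option $-\beta+2^{-m}$ for some $m$), or of the form $-b+2^{-m}$ for some $m<n$, which is itself an upper option, so no simpler surreal can lie in the cut. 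When $b$ is finite the two $\omega^0$-terms collapse into the single real coefficient $-b+2^{-n}$ and the sign expansion becomes the ordinary dyadic one, but the same dichotomy on prefixes rules them out. The fiddly point is the bookkeeping imposed by Definition~\ref{def:reducedSignExpansion} and the need to confirm that no spurious minus is dropped when concatenating the $\omega^0\cdot 2^{-n}$ contribution.
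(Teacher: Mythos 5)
The paper itself gives no proof of this proposition (it is quoted directly from Gonshor, Theorem 10.15), so your argument has to stand on its own. Its skeleton is the right one: the sign expansion of $2^{-n}\omega^{-b}$ is indeed a plus followed by $\omega b+n$ minuses, so there are no lower non-zero prefixes, the upper prefixes are exactly the numbers $2^{-m}\omega^{-\beta}$ with $\omega\beta+m<\omega b+n$, and your cut for $g(2^{-n}\omega^{-b})$, the induction on $\omega b+n$, and the membership check for $-b+2^{-n}$ are all correct.

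The gap is in the simplicity step, which is the heart of the matter. First, your sign expansion of $-b+2^{-n}$ is wrong whenever $b$ is infinite with nonzero finite part: writing $b=b_\infty+k$ with $1\le k<\omega$, the $\omega^0$-coefficient of $-b+2^{-n}$ is the single real $2^{-n}-k$, so for $n\ge 1$ the expansion is $b$ minuses, one plus, and only $n-1$ further minuses (length $b+n$, not $b+1+n$); for $n=0$ and $b$ a successor the number collapses to the ordinal $-(b-1)$ and no plus occurs at all. You noticed exactly this collapse for finite $b$ but not here, so the ``same dichotomy'' is being asserted for a prefix list that is not the actual one. Second, even where your expansion is correct, the exclusion of the prefixes $-\beta$ with $\beta<b$ is argued by comparing with the upper option $-\beta+2^{-m}$ having the \emph{same} $\beta$, but $-\beta<-\beta+2^{-m}$ for every $m$, so that comparison never excludes anything; you must instead compare with an option of strictly larger exponent, e.g. $-(\beta+1)+2^{-m}\le-\beta$ when $\beta+1<b$, or the option $-b+1$ (take $\beta'=b$, $m=0$, available since $n\ge1$) when $\beta+1=b$. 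Both defects are repairable: the true proper prefixes are still contained in $\enstq{-\gamma}{\gamma\le b}\cup\enstq{-b+2^{-m}}{1\le m<n}$, and with the corrected comparisons each of them is $\le -b$ or $\ge$ some upper option, so the conclusion survives. But as written, the verification that no simpler number lies in the cut does not go through; alternatively you could avoid the sign-expansion bookkeeping entirely by a mutual cofinality argument between your cut and the canonical representation of $-b+2^{-n}$, which is closer to Gonshor's own style.
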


\begin{Proposition}[{\cite[Theorems 10.17, 10.19 and 10.20]{gonshor1986introduction}}]
	If $b$ is a surreal number such that for some $\epsilon$-number $\epsilon_i$, some ordinal $\alpha$ and for all natural number $n$,
	$\epsilon_i+n<b<\alpha<\epsilon_{i+1}$, then $g(b)=b$. This is also true if there is some ordinal $\alpha<\epsilon_0$ such that for all natural number $b$, $n\omega^{-1}<b<\alpha <\epsilon_0$.
\end{Proposition}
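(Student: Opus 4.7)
The strategy is to use the recursive formula
$$g(b)=\crotq{c(b),\,g(b')}{g(b'')}$$
from Proposition \ref{prop:formeExpXPurelyInfiniteOmegaAg} (where $b'$ ranges over the lower non-zero prefixes of $b$ and $b''$ over its upper prefixes) and argue by transfinite induction on the length of $b$ that $b$ is the simplest element of the resulting gap, thereby yielding $g(b)=b$.

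A preliminary ingredient is the control of $c(b)$. In either case of the hypothesis, $b$ is not itself an $\epsilon$-number. Writing $b=r_1\omega^{\beta_1}+r_2\omega^{\beta_2}+\cdots$ in Conway normal form with $\beta_1>\beta_2>\cdots$, one has $b\asymp\omega^{\beta_1}$, whence $c(b)=\beta_1$; since $b$ is not an $\epsilon$-number one deduces $\beta_1<b$, and in the first case one even obtains $c(b)<\epsilon_i+\omega<b$. Thus $c(b)$ sits strictly below $b$ and is harmless inside the gap.

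For the inductive step, I would analyse each prefix $b'$ or $b''$ of $b$ according to its regime: (i) if it is an ordinal, Proposition \ref{prop:gord} sends it either to itself or to its successor; (ii) if it still satisfies the hypothesis of the proposition, and has strictly smaller length, the induction hypothesis yields $g(\text{prefix})=\text{prefix}$; (iii) in the remaining regimes the explicit formulas stated just before the proposition (for ordinals close to $\epsilon$-numbers, and for infinitesimal monomials $2^{-n}\omega^{-a}$ via Proposition \ref{prop:gMonomeInfinitesimal}) locate the value of $g$ directly. In all regimes one checks that $g(b')<b$ for lower prefixes and $g(b'')>b$ for upper prefixes, and moreover that for every canonical lower prefix $b'_0$ of $b$ there is some $L\in\{c(b)\}\cup\{g(b')\}$ with $L\geq b'_0$ — here $c(b)$, being a strictly positive surreal comparable to ordinals, plays a crucial role in covering the possible infinitesimal prefixes, on which $g$ can even become negative. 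A symmetric coinitiality holds on the upper side. Consequently the gap $\crotq{c(b),g(b')}{g(b'')}$ is contained in the canonical gap $\crotq{b'}{b''}$ of $b$, whose simplest element is $b$ itself, and hence $g(b)=b$. The second case is handled by the same plan, replacing the lower bound $\epsilon_i+n$ by $n\omega^{-1}$ and the upper cap $\epsilon_{i+1}$ by $\epsilon_0$.

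The main obstacle is precisely this prefix analysis. A surreal with an intricate sign expansion can have prefixes lying in very different regimes, and $g$ is known to decrease or even change sign on small inputs. Verifying in each regime that $g(\text{prefix})$ lands on the correct side of $b$, and that the outputs together with $c(b)$ remain cofinal with the canonical lower prefixes (resp.\ coinitial with the upper ones) of $b$, is exactly what obliges Gonshor to split the statement across his three Theorems 10.17, 10.19 and 10.20.
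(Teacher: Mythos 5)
The paper does not prove this proposition at all: it is quoted verbatim from Gonshor (Theorems 10.17, 10.19, 10.20), so the only basis for comparison is Gonshor's argument, and your overall skeleton — transfinite induction on the length of $b$, using $g(b)=\crotq{c(b),g(b')}{g(b'')}$ and showing that these options straddle $b$ and are mutually cofinal/coinitial with the canonical options of $b$ — is indeed the right (and essentially the only) strategy. The logical frame at the end (options straddle $b$, hence $g(b)\sqsubseteq b$; options cofinal with the canonical ones, hence $b\sqsubseteq g(b)$) is also sound.

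However, the proposal stops exactly where the content of Gonshor's three theorems begins, and the few concrete steps you do commit to are not correct as stated. First, the inference ``$b$ is not an $\epsilon$-number, hence $c(b)=\beta_1<b$'' is invalid: $b=\omega^{\epsilon_0-1}=\epsilon_0/\omega$ is not a fixed point of $x\mapsto\omega^x$, yet $c(b)=\epsilon_0-1>b$. What really excludes such behaviour is the ordinal cap in the hypothesis: if $c(b)\geq b$ (or if $b$ were a generalized $\epsilon$-number), the ordinals below $b$ would be closed under $\gamma\mapsto\omega^\gamma$ and hence cofinal in $\epsilon_{i+1}$ (resp.\ in $\epsilon_0$), contradicting $b<\alpha$ with $\alpha$ an ordinal below $\epsilon_{i+1}$ (resp.\ $\epsilon_0$); your sketch never invokes this bound, yet it is the crux. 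Likewise ``$c(b)<\epsilon_i+\omega<b$'' fails in general: $b=\omega^{\epsilon_i+\omega}$ satisfies the hypothesis with $c(b)=\epsilon_i+\omega$, and the hypothesis also allows $b<\epsilon_i+\omega$. Second, in the second case $c(b)$ need not be positive — $b=\omega^{-1/2}$ satisfies $n\omega^{-1}<b<1<\epsilon_0$ and has $c(b)=-1/2$ — so $c(b)$ cannot ``cover'' the infinitesimal lower prefixes there; the needed cofinality must come from the values of $g$ on those small prefixes themselves (Gonshor's Theorems 10.15/10.16 regime, e.g.\ $g(2/\omega)=1/\omega$), which is exactly the kind of casework your plan defers with ``one checks that\dots''. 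Verifying, regime by regime, that $g(b')<b<g(b'')$ and that the options are cofinal/coinitial with the canonical cut of $b$ is the whole proof; as written, the proposal is a reasonable plan whose essential verifications are missing and whose only worked-out preliminary is partly wrong.
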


\begin{Proposition}[{\cite[Theorem 10.18]{gonshor1986introduction}}]
	If $\epsilon\leq b\leq\epsilon + n$ for some $\epsilon$-number $\epsilon$ and some integer $n$. In particular, the sign expansion of $b$ is the sign expansion of $\epsilon$ followed by some sign expansion $S$. Then, the sign expansion of $g(b)$ is the sign expansion of $\epsilon$ followed by a $+$ and then $S$. In particular, $g(b)=b+1$. 
\end{Proposition}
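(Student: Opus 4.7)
The plan is to proceed by transfinite induction on $|S|$, the length of the tail of the sign expansion of $b$ after the initial $\epsilon$ pluses. The base case is $b=\epsilon$, where Proposition \ref{prop:gord} applied with $\lambda=a=\epsilon$ gives $g(\epsilon)=\epsilon+1$, whose sign expansion is indeed $\epsilon$ pluses followed by a single $+$, matching the claim.

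For the inductive step I would apply the recursive formula $g(b)=\crotq{c(b),g(b')}{g(b'')}$ from Proposition \ref{prop:formeExpXPurelyInfiniteOmegaAg}, with $b'$ ranging over non-zero lower prefixes of $b$ and $b''$ over upper prefixes. Since $b\in[\epsilon,\epsilon+n]$ and $\omega^\epsilon=\epsilon$, we have $b\asymp\epsilon$, hence $c(b)=\epsilon$. Lower prefixes split in two families: those of length $<\epsilon$ are ordinals $\alpha<\epsilon$, for which Proposition \ref{prop:gord} gives $g(\alpha)\leq\alpha+1<\epsilon$, so they are dominated by $c(b)$ in the left set; those of length $\geq\epsilon$ are of the form $\epsilon+\sigma$ for a proper prefix $\sigma$ of $S$, lie in $[\epsilon,\epsilon+n]$ with strictly shorter tail, and by induction satisfy $g(b')=b'+1$. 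Upper prefixes are automatically of length $>\epsilon$ and, since the number of leading pluses of $S$ is at most $n$, they also lie in $[\epsilon,\epsilon+n]$ with shorter tail, so inductively $g(b'')=b''+1$. The recursion therefore reduces to
$$g(b)=\crotq{\epsilon,\,b'+1}{b''+1},$$
where $b'$ (resp.\ $b''$) ranges over the lower (resp.\ upper) prefixes of $b$ with $b'\geq\epsilon$. On the other hand, the uniform sum formula applied to the canonical $b=\crotq{b'}{b''}$ and $1=\crotq{0}{\emptyset}$ gives $b+1=\crotq{b'+1,b}{b''+1}$, so the inductive step reduces to showing that these two cuts name the same surreal.

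The main obstacle is precisely this last identification, since the two left sets differ ($b$ versus $\epsilon$). That $b+1$ lies in both cuts is immediate. That no strictly simpler surreal lies in the cut of $g(b)$ is the crux: I would invoke Theorem \ref{thm:serieToSignExp} to show that any such candidate $s$ must have a sign expansion starting with $\epsilon+1$ pluses (forced by $s>\epsilon+1=b'+1$ at $b'=\epsilon$) and continuing with a proper initial segment of $S$; translating by $-1$ would then produce a surreal strictly between the lower and upper prefixes of $b$ and strictly simpler than $b$, contradicting the definition of $b=\crotq{b'}{b''}$. The hypothesis $b\leq\epsilon+n$ enters decisively here, because it keeps $b$ and $b+1$ in the Archimedean class of $\epsilon$ and lets the translation by $\pm1$ act cleanly on sign expansions inside the $\epsilon$-plateau, preserving the "$\epsilon$ pluses then tail" shape.
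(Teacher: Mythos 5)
The paper itself gives no proof of this proposition (it is quoted from Gonshor, Theorem 10.18), so there is no in-paper argument to compare routes with; I therefore assess your attempt on its own. Your overall strategy is sound: induction on the length of the tail $S$, the observation $c(b)=\epsilon$ (since $\omega^\epsilon=\epsilon$ and $b\asymp\epsilon$), the elimination of the ordinal prefixes $\alpha<\epsilon$ because $g(\alpha)\leq\alpha+1<\epsilon$, and the verification that the remaining lower and upper prefixes stay in $[\epsilon,\epsilon+n]$ with strictly shorter tails (your remark about $S$ having at most $n$ leading pluses is exactly the right reason) are all correct, and they do reduce the problem to comparing the cut $\crotq{\epsilon,\,b'+1}{b''+1}$ with $b+1=\crotq{b'+1,\,b}{b''+1}$.

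The gap is in the crux step, which as written is circular. To argue that no surreal simpler than $b+1$ lies in the cut, you enumerate the candidates as numbers whose sign expansion is $\epsilon+1$ pluses followed by a proper initial segment of $S$, and you then ``translate by $-1$'' to delete the plus after the $\epsilon$-block. Both moves presuppose that, at the current stage, the sign expansion of $b+1$ is that of $\epsilon$ followed by $+$ and then $S$, and more generally that adding or subtracting $1$ inside $[\epsilon,\epsilon+n]$ acts by inserting or deleting a plus right after the $\epsilon$-block --- but that is precisely the statement being proven, and your induction hypothesis only covers tails strictly shorter than $S$ (the tail of $b+1$ is $+S$, which is longer). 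The hypothesis does license this for the numbers $t_\sigma$ with expansion $\epsilon$ followed by $\sigma\sqsubsetneq S$ (these are prefixes of $b$, lie in $[\epsilon,\epsilon+n]$, and have shorter tails), but your sketch does not route the argument through them, nor does it say how to get from ``the cut equals the number with expansion $\epsilon,+,S$'' back to ``the cut equals $b+1$''. A clean repair avoiding sign-expansion manipulation altogether: since $b>\epsilon$ and $b\leq\epsilon+n$, the tail $S$ begins with $k$ pluses for some $1\leq k\leq n$, so $\epsilon+(k-1)$ is a lower prefix of $b$ with $(\epsilon+(k-1))+1=\epsilon+k\geq b$; hence the left sets of the two cuts are mutually cofinal (every element of $\{b'+1,\,b\}$ is bounded by some $b'+1$ with $b'\geq\epsilon$, and conversely), the right sets coincide, and cofinality gives $g(b)=b+1$. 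The sign-expansion half of the proposition then still needs its own short argument (e.g.\ via Theorem \ref{thm:serieToSignExp} and reduced sign expansions, or via the prefix argument applied to the $t_\sigma$), which your proposal leaves implicit.
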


It is possible to bound the length of $g(a)$ depending on the length of $a$. 

\begin{Lemma}[{\cite[Lemma 5.1]{DriesEhrlich01}}]
	\label{lem:lengthGA}
	For all $a\in\Nobf$, $\length{g(a)}\leq \length a +1$.
\end{Lemma}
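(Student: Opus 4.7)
The plan is to prove the bound by transfinite induction on $\length{a}$, using the recursive characterization $g(a)=\crotq{c(a),g(a')}{g(a'')}$ from Proposition~\ref{prop:formeExpXPurelyInfiniteOmegaAg}, where $a'$ (resp.\ $a''$) ranges over the lower non-zero (resp.\ upper) prefixes of $a$. The smallest positive surreal is $1$, which has no proper non-zero lower prefixes and no upper prefixes, and $c(1)=0$; hence $g(1)=\crotq{0}{\,}=1$, so the bound $\length{g(1)}\le 2$ holds at the base of the induction.

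Before carrying out the inductive step, I would prove an auxiliary simplicity bound: if $L<R$ are sets of surreals whose elements all have length at most $\gamma$, then $\length{\crotq{L}{R}}\le\gamma+1$. The argument is that if $x=\crotq{L}{R}$ satisfied $\length{x}>\gamma+1$, its strict prefix $x^{*}=x|_{\gamma+1}$ would still separate $L$ from $R$. Indeed, for any $y\in L\cup R$, the first position $\beta$ at which $x$ and $y$ disagree must satisfy $\beta\le\gamma$: the two sequences cannot first split at $\beta=\length{x}$ since that would force $\length{y}>\gamma+1$, and in the remaining cases either both have a sign at $\beta<\length{y}\le\gamma$ or $\beta=\length{y}\le\gamma$. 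Since $\length{x^{*}}=\gamma+1>\beta$, the sign $x^{*}(\beta)$ agrees with $x(\beta)$, so $x^{*}$ inherits the same strict inequality with $y$. This contradicts the simplicity of $x$.

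For the inductive step, assume the claim for all $b$ with $\length{b}<\length{a}$, and write $a$ in normal form $a=\sum_{i<\nu}r_i\omega^{a_i}$; then $a\asymp\omega^{a_0}$, so $c(a)=a_0$. Combining Lemma~\ref{lem:lengthOmegaA} with Lemma~\ref{lem:lengthTerm} and the description in Theorem~\ref{thm:serieToSignExp} (from which one reads off that the sign expansion of $r_0\omega^{a_0}$ contains that of $\omega^{a_0}$, up to a global sign flip when $r_0<0$), one obtains the chain $\length{c(a)}=\length{a_0}\le\length{\omega^{a_0}}\le\length{r_0\omega^{a_0}}\le\length{a}$. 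Any proper prefix $a'$ (resp.\ $a''$) of $a$ satisfies $\length{a'}<\length{a}$, so by induction $\length{g(a')}\le\length{a'}+1\le\length{a}$, and likewise for $a''$. Hence every element appearing in the representation $\crotq{c(a),g(a')}{g(a'')}$ of $g(a)$ has length at most $\length{a}$, and the auxiliary simplicity bound yields $\length{g(a)}\le\length{a}+1$, closing the induction.

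The main obstacle I anticipate is making the auxiliary simplicity bound airtight: the case analysis on the first disagreement between two sign sequences of bounded length is conceptually easy but error-prone when one of the sequences has ended. A secondary care-point is the inequality $\length{c(a)}\le\length{a}$, whose proof must go through the leading monomial of the normal form and invoke the sign-expansion-of-a-term formulas from Theorem~\ref{thm:serieToSignExp}.
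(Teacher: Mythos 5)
Your proof is correct: the paper does not prove this lemma itself (it is quoted from \cite{DriesEhrlich01}, Lemma 5.1), and your argument---transfinite induction on $\length{a}$ via the representation $g(a)=\crotq{c(a),g(a')}{g(a'')}$, together with the auxiliary bound $\length{\crotq{L}{R}}\le\gamma+1$ when every element of $L\cup R$ has length at most $\gamma$---is essentially the standard induction-on-length proof given in that reference. As a minor simplification, $\length{c(a)}\le\length{a}$ follows directly from Gonshor's characterization of $\omega^{c(a)}$ as the simplest member of the Archimedean class of $a$ (so $\omega^{c(a)}\sqsubseteq a$) combined with Lemma \ref{lem:lengthOmegaA}, avoiding the detour through the normal form, Lemma \ref{lem:lengthTerm} and Theorem \ref{thm:serieToSignExp}.
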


The function $g$ has a inverse function, $h$ defined as follows

$$h(b) = \crotq{0,h(b')}{h(b''),\f{\omega^b}{n}}$$
This expression is uniform (see \cite{gonshor1986introduction}) and then does not depend of the expression of $b$ as $\crotq{b'}{b''}$.

\begin{Corollary}
	\label{cor:hmord}
	If $a$ is an ordinal number then $h(-a)=\omega^{-a-1}$.
\end{Corollary}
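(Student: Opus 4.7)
The plan is to deduce this corollary directly from Proposition \ref{prop:gMonomeInfinitesimal} by using the fact that $h$ is defined as the inverse of $g$. Since $h = g^{-1}$, establishing $h(-a) = \omega^{-a-1}$ is equivalent to establishing $g(\omega^{-a-1}) = -a$.

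To obtain this, I would simply apply Proposition \ref{prop:gMonomeInfinitesimal} with the parameters $n = 0$ and $b = a+1$. Note that $a + 1$ is an ordinal whenever $a$ is (using ordinary ordinal successor), so the hypotheses of the proposition are met. The proposition then gives
$$g\bigl(2^{0}\omega^{-(a+1)}\bigr) = -(a+1) + 2^{0} = -(a+1) + 1 = -a,$$
that is, $g(\omega^{-a-1}) = -a$, which is exactly what we needed. Inverting yields $h(-a) = \omega^{-a-1}$.

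There is essentially no obstacle here: the corollary is a one-line specialization of Proposition \ref{prop:gMonomeInfinitesimal}. The only mild sanity check worth performing is the edge case $a = 0$, where the statement reads $h(0) = \omega^{-1}$; this is consistent with Proposition \ref{prop:gMonomeInfinitesimal} (taking $n = 0, b = 1$, which gives $g(\omega^{-1}) = 0$) and also agrees with what one gets by unfolding the defining formula $h(0) = \crotq{0}{\omega^{0}/n}$, whose simplest solution is the positive infinitesimal $\omega^{-1}$. An alternative proof by transfinite induction on $a$ using the recursive definition of $h$ is possible but considerably more cumbersome, since it requires tracking the canonical representation of $-a$ at successor and limit stages; the approach via $g$ avoids this entirely.
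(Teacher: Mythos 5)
Your proof is correct and matches the paper's argument, which likewise derives the corollary directly from Proposition \ref{prop:gMonomeInfinitesimal} together with the fact that $h=g^{-1}$. Your instantiation $n=0$, $b=a+1$ giving $g(\omega^{-a-1})=-a$ simply makes explicit the one-line computation the paper leaves implicit.
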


\begin{proof}
	It is a direct consequence of Proposition \ref{prop:gMonomeInfinitesimal} and the fact that $h=g^{-1}$.
\end{proof}

As for $g$, we can bound the length of $h(a)$ in function of the length of $a$.

\begin{Lemma}[{\cite[Proposition 3.1]{aschenbrenner:hal-02350421}}]
	\label{lem:lengthH}
	For all $a\in\Nobf$, $\length{h(a)}\leq\omega^{\length a +1} $
\end{Lemma}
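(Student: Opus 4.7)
The plan is to prove the bound by transfinite induction on $\length b$. For the base case $b=0$, one computes directly that $h(0)=\omega^{-1}$ (consistent with $g(\omega^{-1})=0$ from Proposition \ref{prop:gMonomeInfinitesimal}), whose sign expansion consists of a single $+$ followed by $\omega$ minuses, of total length $\omega=\omega^{0+1}$.

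For the inductive step, fix $b$ with $\length b=\alpha$ and unfold the defining cut
$$h(b)=\crotq{0,\,h(b')}{h(b''),\,\omega^b/n},$$
where $b',b''$ range over proper prefixes of $b$ and $n\in\Nbb^*$. Each recursive term is controlled by the inductive hypothesis: since $\length{b'},\length{b''}<\alpha$, we get $\length{h(b')},\length{h(b'')}\leq\omega^{\length{b'}+1}\leq\omega^\alpha$. For the terms $\omega^b/n$, I would combine Lemma \ref{lem:lengthOmegaA} (giving $\length{\omega^b}\leq\omega^\alpha$) with Theorem \ref{thm:serieToSignExp}, which describes the sign expansion of $\omega^b\cdot r$ for a positive real $r$ as that of $\omega^b$ followed by $\omega^{|b|_+}$ copies of each sign of $r$. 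Since $|b|_+\leq\alpha$ and $\length{1/n}\leq\omega$, this yields $\length{\omega^b/n}\leq\omega^\alpha+\omega^{\alpha+1}=\omega^{\alpha+1}$.

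To extract a bound on $\length{h(b)}$ from the cut, I would invoke the standard simplicity principle: at each position $\gamma<\length{h(b)}$, the prefix $h(b)[{:}\gamma]$ cannot itself lie in the cut (else it would be a simpler element satisfying the same inequalities), so some witness $y\in L\cup R$ has sign expansion agreeing with $h(b)$ on $[0,\gamma)$ and disagreeing at $\gamma$, forcing $\gamma<\length y$. Combining with the preceding estimates then gives $\length{h(b)}\leq\sup_{y\in L\cup R}\length y\leq\omega^{\alpha+1}$.

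The main obstacle I foresee is precisely this final simplicity step: a careless accounting produces $\sup+1$ rather than $\sup$, which would nominally fail when $\length{\omega^b/n}$ attains $\omega^{\alpha+1}$ exactly (for instance when $b$ is an ordinal and $n$ is non-dyadic). The resolution is to observe that this maximum is attained only in the limit as $n$ varies over $\Nbb^*$, so the supremum is a true limit ordinal and absorbs the spurious $+1$; alternatively, one can exhibit an explicit witness in the cut of length strictly less than $\omega^{\alpha+1}$ (for example a carefully chosen $\omega^{b^{\star}}/n$ for a suitable prefix $b^{\star}$ of $b$). Either route requires a case split on whether $\alpha$ is zero, a successor, or a limit, combined with a careful bookkeeping via Theorem \ref{thm:serieToSignExp}, and constitutes the main technical burden of the proof.
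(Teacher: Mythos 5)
You should first note that the paper itself gives no proof of this lemma: it is imported verbatim from the cited reference (Proposition 3.1 there), so your argument has to stand on its own. Its skeleton is fine — induction on $\length b$, the bounds $\length{h(b')},\length{h(b'')}\leq\omega^{\length{b'}+1}\leq\omega^{\length b}$ from the inductive hypothesis, the bound on $\length{\omega^b/n}$ from Lemma \ref{lem:lengthOmegaA} and Theorem \ref{thm:serieToSignExp}, and the principle $\length{\crotq AB}\leq\sup_{y\in A\cup B}(\length y+1)$ (which is correct, although as you half-notice the witness blocking a prefix of length $\gamma$ may \emph{equal} that prefix, so pointwise one only gets $\gamma\leq\length y$; the clean supremum form is what one should use). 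The genuine gap is in how you dispose of the critical case. Your first resolution rests on a false claim: the value $\omega^{\length b+1}$ is \emph{not} attained "only in the limit as $n$ varies". If $b$ is an ordinal, so that $|b|_+=\length b$ and $\length{\omega^b}=\omega^{\length b}$, and $1/n$ is non-dyadic (length $\omega$ as a real), then the \emph{single} right option $\omega^b/n$ has length exactly $\omega^{\length b}\oplus\omega^{\length b}\otimes\omega=\omega^{\length b+1}$, so $\sup_{y}(\length y+1)$ really is $\omega^{\length b+1}+1$ and no limit-absorption argument can save the estimate. Your second resolution is also not viable as stated: for a proper prefix $b^\star$ of $b$ the element $\omega^{b^\star}/n$ need not be an upper option of the cut at all — for $b=1$ one has $h(1)=1$ while $\omega^{b^\star}/n=1/n<1$ — so it cannot serve as a witness, and you verify no cofinality condition that would let you substitute it.

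The repair is simpler than either of your suggestions. Since $\{\omega^b/2^k\}_{k\in\Nbb}$ is coinitial in $\{\omega^b/n\}_{n\in\Nbb}$, cofinality lets you rewrite the defining cut as $h(b)=\crotq{0,\,h(b')}{h(b''),\,\omega^b/2^k}$, and Theorem \ref{thm:serieToSignExp} gives $\length{\omega^b/2^k}=\length{\omega^b}\oplus\omega^{|b|_+}\otimes k\leq\omega^{\length b}\otimes(k+1)<\omega^{\length b+1}$. Every option of this representation then has length strictly below $\omega^{\length b+1}$ (the recursive options contribute at most $\omega^{\length b}$), so the supremum principle yields $\length{h(b)}\leq\omega^{\length b+1}$ and the induction closes; your base case $h(0)=\omega^{-1}$, justified via Proposition \ref{prop:gMonomeInfinitesimal}, is correct as written. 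With that one-line replacement of the right options your outline becomes a complete proof; as submitted, the crucial step fails.
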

We will also prove another lemma, Lemma \ref{lem:lengthOmegaGA}, that looks like the previous lemma but that is better in many cases but not always. To do so we first prove another technical lemma.

\begin{Lemma}
	\label{lem:gomegacCasSpec}
	For all $c$, denote $c_+$ the surreal number whose signs sequence is the one of $c$ followed by a plus. Assume $g(a)<c$ for all $a\sqsubset\omega^c$ such that $0<a<\omega^c$. Then $g(\omega^c)$ is $c_+$ if $c$ does not have a longest prefix greater than itself, otherwise, $g(\omega^c)=c''$ where $c''$ is the longest prefix of $c$ such that $c''>c$.
\end{Lemma}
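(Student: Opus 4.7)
The plan is to start from Gonshor's recursive formula in Proposition \ref{prop:formeExpXPurelyInfiniteOmegaAg}. Since $\omega^c\asymp\omega^c$, we have $c(\omega^c)=c$, so
\[g(\omega^c) \;=\; \crotq{c,\; g(x')}{g(x'')},\]
where $x'$ runs over the lower non-zero prefixes of $\omega^c$ and $x''$ over the upper prefixes. The hypothesis forces every $g(x')<c$, and since $c$ itself sits in the left set, by cofinality the expression simplifies to
\[g(\omega^c) \;=\; \crotq{c}{g(x'')\,:\,x''\sqsubset\omega^c,\;x''>\omega^c}.\]
It then remains to identify the simplest surreal strictly greater than $c$ and strictly less than every $g(x'')$.

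To enumerate the upper prefixes I would invoke Theorem \ref{thm:serieToSignExp}: the sign expansion of $\omega^c$ consists of a leading $+$ followed, for each $\alpha<|c|$, by a block of $\omega^{|c[:\alpha]|_+ + 1}$ copies of $c(\alpha)$. An upper prefix is exactly one whose length lies inside a minus block, hence inside the block attached to some $\alpha$ with $c(\alpha)=-$; the canonical such prefix stops at the very start of the block and has the same sign expansion as $\omega^{c[:\alpha]}$. If $c$ has no minus at all then $c$ is an ordinal, $\omega^c$ has no minus, the upper set is empty, and $g(\omega^c)=\crotq{c}{\,} = c+1 = c_+$. If $c$ has minuses but no maximum minus position, then the canonical upper prefixes together with their refinements inside each minus block give a family of upper bounds whose $g$-values, computed recursively from the formula, converge to $c_+$ from above without attaining it; the simplest surreal in the resulting open gap $(c,\inf g(x''))$ is then $c_+$. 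Finally, if $c$ has a longest prefix $c''$ with $c''>c$, so that $c$ is $c''$ followed by a minus and then some ordinal $\delta\geq0$ of pluses, the candidate answer $c''$ is strictly simpler than $c$ (being a proper prefix) and already satisfies $c<c''$; it then suffices to verify $c''<g(x'')$ for every upper prefix $x''$, after which the characterisation of $\crotq{A}{B}$ as the simplest element in the corresponding open interval forces $g(\omega^c)=c''$.

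The hard part will be establishing $c''<g(x'')$ in the ``longest prefix'' case. The upper prefixes of $\omega^c$ split into two families: those lying inside a minus block coming from a position $\alpha<|c''|$ of $c$, which are also upper prefixes of $\omega^{c''}$ and can be controlled by the analogous recursive computation of $g(\omega^{c''})$; and those lying inside the terminal minus block at position $|c''|$, whose sign expansion is obtained from that of $\omega^{c''}$ by appending $\beta$ minuses for some $\beta<\omega^{|c''|_+ + 1}$. For this second family one must unwind the definition of $g$ on such a prefix, transfer the hypothesis from $\omega^c$ to the prefix, and argue by induction on $\beta$ to obtain $g(x'')>c''$. Once the bound is established for both families, the simplicity of $c''$ (respectively of $c_+$ in the other case) inside the common open gap yields the two stated conclusions of the lemma.
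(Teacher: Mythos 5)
Your reduction of the bracket to $\crotq{c}{g(x'')}$ and your case split on the last minus of $c$ match the paper's opening moves, but the decisive step is missing. Knowing $c<c''<g(x'')$ for every upper prefix $x''$ of $\omega^c$ only places $c''$ inside the cut, and by the simplicity property this yields $g(\omega^c)\sqsubseteq c''$, \emph{not} $g(\omega^c)=c''$: the bracket is the simplest element of the interval, and the proper prefixes of $c''$ that exceed $c$ (the shorter upper prefixes $d$ of $c$, which satisfy $d>c''>c$ and are strictly simpler) are candidates that your conditions do not exclude, since $g(x'')>c''$ says nothing about whether some $g(x'')$ gets below such a $d$. To conclude you must show that for every upper prefix $d\sqsubset c''$ of $c$ there is some $x''$ with $g(x'')\leq d$. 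The same issue makes ``the $g$-values converge to $c_+$ from above without attaining it'' in the no-last-minus case an assertion rather than a proof: what is needed there is precisely that the family $\{g(x'')\}$ reaches at or below every upper prefix of $c$, so that no prefix of $c$ survives in the gap and $c_+$ is the simplest element.

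This exclusion is where the paper's proof does its work, and it forces the global structure you did not set up: an induction on the simplicity of $c$. For an upper prefix $d$ of $c$, take the shortest upper prefix $d_1$ of $c$ properly extending $d$; the hypothesis on $\omega^c$ transfers to $\omega^{d_1}$ (its lower prefixes are lower prefixes of $\omega^c$), $d$ is the longest upper prefix of $d_1$, and the induction hypothesis gives $g(\omega^{d_1})=d$, which puts $d$ itself in the right-hand set and eliminates it as a simpler candidate (the longest-prefix case is handled by a similar appeal to the induction hypothesis after first noting $g(\omega^c)\sqsubseteq c''$). Without an induction on $c$, your phrase ``computed recursively from the formula'' has nothing to run on. Conversely, the step you single out as the hard part, $g(x'')>c''$, needs no induction on $\beta$: any upper prefix $x''$ of $\omega^c$ extends $\omega^{c'}$ for some upper prefix $c'$ of $c$ (so $c'\geq c''$) by fewer than a full block of minuses, hence $x''\asymp\omega^{c'}\succeq\omega^{c''}$, and Gonshor's formula gives $g(x'')>c(x'')\geq c''$ in one line. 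So your plan spends its effort on the easy inequality while the simplicity/exclusion argument that actually determines the value of the cut is absent.
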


\begin{proof}
	By induction on $c$:
	
	\begin{itemize}
		\item For $c=0$, $g(\omega^0)=g(1)=1$ whose signs sequence is indeed the one of $0$ followed by a plus.
		
		\item Assume the property for $b\sqsubset c$. Assume $g(a')<c$ for all $a'\sqsubset\omega^c$ such that $0<a'<\omega^c$. Then,
		$$
			g(\omega^c)=\crotq{c}{g(a'')}
		$$
		where $a''$ ranges over the elements such that $a''\sqsubset \omega^c$ and $a''>\omega^c$.
		
		\begin{itemize}
			\item First case: $c$ has a longest prefix $c_0$ such that $c_0>c$. Then, for all $a''$ such that $a''\sqsubset \omega^c$ and $a''>\omega^c$, $a''\succeq \omega^{c_0}$, hence $g(a'')> c_0$. Since $c<c_0<g(a'')$, the simplicity property ensures $g(\omega^c)\sqsubseteq c_0\sqsubset c$. Then $g(\omega^c)$ is some prefix $c''$ of $c$, greater than $c$. 	We look at $\omega^{c''}$. Notice that for all $b\sqsubset c''$ is such that $0<b<c''$, $b\sqsubset c$ and $b<c$, hence $g(b)<c<c''$. Therefore we can apply the induction hypothesis to $c''$ and $g(\omega^{c''})$ is $c''_+$ if the signs sequence of $c$ does not end with only minuses, otherwise, $g(\omega^{c''})$ is the last (strict) prefix of $c''$ greater than $c''$. 
			
			\begin{itemize}
				\item First subcase: $g(\omega^{c''})=c''_+$. If there is some $b$ such that $c''\sqsubset b\sqsubset c$ and $b>c$, then $g(\omega^b)$ is a prefix of $g(\omega^c)=c''$. But, $c''=g(\omega^c)<g(\omega^b)<g(\omega^{c''})=c''_+$. Then $c''$ must be a strict prefix of $g(\omega^b)$ which is a contradiction. Then $c''$ is indeed the last strict prefix of $c$ greater than $c$.
				
				\item Second subcase:  $g(\omega^{c''})$ is the last (strict) prefix of $c''$ greater than $c''$. If there is some $b$ such that $c''\sqsubset b\sqsubset c$ and $b>c$, then $g(\omega^b)$ is a prefix of $g(\omega^c)=c''$. Since $g(b)<g(c'')$, $g(b)$ is prefix of $c''$ smaller than $c''$. But this contradicts the fact that $g(\omega^b)>g(\omega^c)=c''$. Therefore, $c''$ is the last prefix of $c$ greater than $c$.
			\end{itemize}

			\item Second case: $c$ does not have a longest prefix greater than $c$. Then,
			$$
				g(\omega^c) = \crotq c{g(\omega^{c''})}
			$$
			where $c''$ ranges over the prefixes of $c$ greater than $c$. Let $d\sqsubset c$ such that $d>c$. Then there is $d_1$ or minimal length such that $d\sqsubset d_1\sqsubset c$ and $d_1>c$. By minimality of $d_1$, $d$ is the longest prefix of $d_1$ greater than $d_1$. As in the first case, we can apply the induction hypothesis on $d_1$ and get $g(\omega^{d_1})=d$. Therefore, again by induction hypothesis,
			$$
				g(\omega^c)=\crotq c{c'',c''_+} = \crotq c{c''}
			$$
			where $c''$ ranges over the prefixes of $c$ greater than $c$. We finally conclude that $g(\omega^c)=c_+$.
		\end{itemize}
	\end{itemize}
\end{proof}

In the following we denote $\oplus$ the usual addition over the ordinal numbers and~$\otimes$ the usual product over ordinal numbers.

\begin{Lemma}
	\label{lem:lengthOmegaGA}
	For all $a>0$, $\length a\leq\length{\omega^{g(a)}}\otimes(\omega+1)$.
\end{Lemma}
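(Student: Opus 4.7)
The approach is a transfinite induction on $a$ in the simplicity order, organized through the Gonshor normal form $a=\Sum{i<\nu}{}r_i\omega^{a_i}$ of Theorem \ref{thm:normalForm}. By Theorem \ref{thm:serieToSignExp}, $\length a$ is the juxtaposition-sum of the lengths of the reduced monomials $r_i\omega^{a_i^\circ}$, so it suffices to control each monomial, with the dominant contribution coming from the leading term $r_0\omega^{a_0}$ (where $a_0=c(a)$ is the Archimedean exponent). A key preliminary observation, obtained from the formula $\length{\omega^b}=1+\Sum{\alpha<\length b}{}\omega^{|b[:\alpha]|_++1}$, is that $\length{\omega^{b'}}\leq\length{\omega^b}$ whenever $b'$ is a prefix of $b$, and that $\length{\omega^b}\geq\omega\otimes\length b$ in all cases.

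For the monomial case $a=r\omega^c$ with $r>0$ real and $c\in\No$, use the canonical formula $g(a)=\crotq{c(a),g(a')}{g(a'')}$ from Proposition \ref{prop:formeExpXPurelyInfiniteOmegaAg}, together with Lemma \ref{lem:gomegacCasSpec} and Propositions \ref{prop:gord} and \ref{prop:gMonomeInfinitesimal}, to describe the sign expansion of $g(a)$ explicitly; applying Theorem \ref{thm:serieToSignExp} to $\omega^{g(a)}$ then yields a closed-form expression for $\length{\omega^{g(a)}}$. In the generic case (that of Lemma \ref{lem:gomegacCasSpec}, giving $g(\omega^c)=c_+$), Theorem \ref{thm:serieToSignExp} produces $\length{\omega^{g(a)}}=\length{\omega^c}+\omega^{|c|_+ + 1}$; combined with the formula for the sign expansion of $\omega^c\cdot r$ (which adds at most $\omega^{|c|_+ + 1}$ signs beyond $\omega^c$), this gives $\length{r\omega^c}\leq\length{\omega^{g(a)}}\leq\length{\omega^{g(a)}}\otimes(\omega+1)$. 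A multi-term normal form is then handled inductively by splitting off the leading term: the remainder is simpler than $a$, so the induction hypothesis applies, and the juxtaposition formula allows us to reassemble the bound, using the monotonicity of $\length{\omega^{\cdot}}$ along prefixes.

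The main obstacle is the very-infinitesimal regime, typified by $a=\omega^{-\alpha}$ for ordinal $\alpha$: Proposition \ref{prop:gMonomeInfinitesimal} gives $g(a)=-\alpha+1$, so $\length{\omega^{g(a)}}$ is of order $\omega\otimes\length\alpha$, while $\length a=1+\omega\cdot\length\alpha$ by Theorem \ref{thm:serieToSignExp}. Here the slack summand $\length{\omega^{g(a)}}$ in the decomposition $\length{\omega^{g(a)}}\otimes(\omega+1)=\length{\omega^{g(a)}}\otimes\omega\oplus\length{\omega^{g(a)}}$ is precisely what is needed to absorb the extra $\omega$-block contributed by the transition between the purely infinite and infinitesimal portions of the sign expansion; this is exactly why the factor $(\omega+1)$ is sharp, and it also explains why the naive bound $\length a\leq\omega^{\length{g(a)}+1}$ extracted from Lemma \ref{lem:lengthH} — which dwarfs $\length{\omega^{g(a)}}\otimes(\omega+1)$ in this regime — does not suffice by itself. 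Similar care is required at the $\epsilon$-number barriers, where Propositions \ref{prop:gord} and \ref{prop:gMonomeInfinitesimal} introduce one-sign corrections; the careful case analysis already performed in the proof of Lemma \ref{lem:gomegacCasSpec} can be reused here, each exceptional jump being absorbed once more by the $+\length{\omega^{g(a)}}$ slack.
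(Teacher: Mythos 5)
There is a genuine gap. Your plan reduces the statement to per-monomial bounds and then ``reassembles'' them through the juxtaposition formula, but the right-hand side of the lemma involves $g(a)$ of the \emph{whole} number, and nowhere do you relate the sign expansion (or normal form) of $g(a)$ to those of the individual terms $r_i\omega^{a_i}$ of $a$. There is no reason why $\length{\omega^{g(a)}}$ should split as a juxtaposition-sum of quantities attached to the terms: $g$ is neither additive nor term-wise, and $g(r\omega^c)$ already depends on $r$ (see Proposition \ref{prop:gMonomeInfinitesimal}), so even your ``monomial case'' is not covered by Lemma \ref{lem:gomegacCasSpec}, which computes $g(\omega^c)$ only under the hypothesis that $g(a')<c$ for every prefix $a'$ with $0<a'<\omega^c$. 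Moreover, the claim that the leading term dominates $\length a$ is false in general (a long tail of lower-order terms dominates the length), and splitting off the leading term leaves a tail that is not a prefix of $a$, hence not simpler than $a$, so the induction as you set it up is not well-founded in the simplicity order you invoke.

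The missing idea --- and the one the paper's proof runs on --- is a tail-copying property of $g$. Inducting on $\length a$, with $c$ the Archimedean exponent of $a$, one distinguishes: either some strict prefix $a_0<a$ satisfies $g(a_0)\geq c$, and then $g(a)$ is the sign sequence of $g(a_0)$ followed by the tail $S$ of $a$ after $a_0$; since by Theorem \ref{thm:serieToSignExp} each of the $\alpha=\length S$ copied signs contributes at least $\omega$ signs to $\omega^{g(a)}$, one gets $\length{\omega^{g(a)}}\geq\length{\omega^{g(a_0)}}\oplus(\omega\otimes\alpha)$, and the bound follows from the induction hypothesis on $a_0$ together with $\length a=\length{a_0}\oplus\alpha$. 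Or else every prefix $a_0<a$ has $g(a_0)<c$, in which case the sign sequence of $a$ is that of $\omega^c$ followed only by minuses, and one falls back on Lemma \ref{lem:gomegacCasSpec} (this is where the ``last prefix greater than $c$'' alternative and the factor $\omega+1$ are genuinely needed, via $\length a=\length{\omega^{g(a)}}\oplus\omega^{\alpha+1}\leq\length{\omega^{g(a)}}\otimes\omega$). Your proposal contains the second, exceptional branch in embryo, but it lacks the first branch entirely, and that branch is exactly what handles numbers with more than one term; without it the ``reassembly'' step cannot be carried out.
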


\begin{proof}
	We proceed by induction on $\length a$. 
	\begin{itemize}
		\item For $a=1$, $g(a)=1$ and we indeed have $1\leq \omega^2$.
		
		\item Assume the property for all $b\sqsubset a$. Let $c$ such that $\omega^c\asymp a$. Then 
		$$
			g(a)=\crotq{c,g(a')}{g(a'')}
		$$
		We split into two cases:
		\begin{itemize}
			\item If there is some $a_0\sqsubset a$ such that $a_0<a$ and $g(a_0)\geq c$ then
			$$
			g(a)=\crotq{g(a')}{g(a'')}
			$$
			and if $S$ stand for the signs sequence such that $a$ is the signs sequence of $a_0$ followed by $S$, $g(a)$ is the signs sequence of $g(a_0)$ followed $S$. Let $\alpha$ the length of $S$. Therefore using Theorem \ref{thm:serieToSignExp},
			$$
			\length{\omega^{g(a)}}\geq\length{\omega^{g(a_0)}}\oplus(\omega\otimes\alpha)
			$$
			and then,
			\begin{align*}
				\length{\omega^{g(a)}}\otimes(\omega+1) &\geq \length{\omega^{g(a_0)}}\otimes\omega\oplus \length{\omega^{g(a_0)}}\oplus\alpha \\
				&\geq\length{\omega^{g(a_0)}}\otimes(\omega+1)\oplus\alpha
			\end{align*}
			and by induction hypothesis on $a_0$,
			$$
			\length{\omega^{g(a)}}\otimes(\omega+1)\geq\length{a_0}\oplus\alpha=\length a
			$$
			
			\item Otherwise, for any $a_0\sqsubset a$ such that $a_0<a$, $g(a_0)<c$. Therefore,
			$$ g(a)=\crotq{c}{g(a'')}$$
			Also, since $a>0$, we can write the signs sequence of $a$ as the one of $\omega^c$ followed by some signs sequence $S$. If $S$ contains a plus, then there is a prefix of $a$, $a_0$ such that $a_0<a$ and still $a_0\asymp\omega^c$ and then $g(a_0)>c$ what is not the case by assumption. Then, $S$ is a sequence of minuses. If $S$ is not the empty sequence, let $\alpha$ be the length of $S$. Then the signs sequence of $g(a)$ is the one of $g(\omega^c)$ followed by $S$. Hence,
			$$
				\length{\omega^{g(a)}}\geq \length{\omega^{g(\omega^c)}}\oplus(\omega\otimes\alpha)
			$$ 
			As in the previous case, but using the induction hypothesis on $\omega^c$,
			$$
				\length{\omega^{g(a)}}\otimes(\omega+1)\geq\length{\omega^c}\oplus\alpha=\length a
			$$
			Now if $S$ is the empty sequence, $a=\omega^c$. Applying Lemma \ref{lem:gomegacCasSpec} to $c$ we get that either $g(a)=c_+$ or $g(a)$ is the last prefix of $c$ greater than $c$. If the first case occurs then $a$ is a prefix of $\omega^{g(a)}$ and then $\length{\omega^{g(a)}}\geq\length a$. Now assume that the second case occurs. Then for any $b$ such that $g(a)\sqsubset b\sqsubset c$, $b<c$.
			If for all $b'\sqsubset b$ such that $b'<b$, $g(b')<b$, then Lemma \ref{lem:gomegacCasSpec} applies. Since $b$ has a last prefix greater than itself, $g(a)$, $g(\omega^b)=g(a)$ and we reach a contradiction since $b<c$ and therefore $\omega^b<\omega^c=a$. Then for all  $b$ such that $g(a)\sqsubset b\sqsubset c$, there is some $b'\sqsubset b$, $b'<b$ such that $g(\omega^{b'})>b$. Since the signs sequence of $b$ consists in the one of $g(a)$ a minus and then a bunch of pluses, and since $g(\omega^{b'})$ must also a a prefix of $c$, $g(\omega^{b'})\sqsubseteq g(a)\sqsubset b$. Therefore to ensure $g(b')>b$, we must have $g(\omega^{b'})\geq g(a)$. Since $\omega^{b'}$ is a prefix of $a$ lower than $a$, it is a contradiction. Therefore, there is no $b$ such that $g(a)\sqsubset b\sqsubset c$ and $b<c$, and finally, the signs sequence of $c$ is the one $g(a)$ followed by a minus. In particular, $g(a)$ and $c$ have the same amount of pluses, say $\alpha$. Then, using Theorem \ref{thm:serieToSignExp},
			\begin{align*}
				\length a &= \length{\omega^{g(a)}}\oplus\omega^{\alpha+1}\\
				&\leq \length{\omega^{g(a)}}\oplus \length{\omega^{g(a)}}\otimes\omega = \length{\omega^{g(a)}}\otimes\omega\\
				&\leq \length{\omega^{g(a)}}\otimes(\omega+1)
			\end{align*}
		\end{itemize}
		The induction principle concludes.
	\end{itemize}
\end{proof}

\begin{Corollary}
	\label{cor:lengthOmegaGA}
	For all $a>0$ and for all multiplicative ordinal greater than $\omega$, if $\length a\geq\mu$, then $\length{\omega^{g(a)}}\geq\mu$.
\end{Corollary}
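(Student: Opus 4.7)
The plan is to derive this corollary directly from Lemma \ref{lem:lengthOmegaGA} by exploiting the closure property defining multiplicative ordinals. By that lemma, we already know that
\[
	\length a \leq \length{\omega^{g(a)}}\otimes(\omega+1).
\]
So the work consists in showing that if $\length{\omega^{g(a)}}$ were strictly less than $\mu$, then the right-hand side would also be strictly less than $\mu$, contradicting the assumption $\length a \geq \mu$.

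To make this precise, I would argue by contradiction. Suppose $\length{\omega^{g(a)}} < \mu$. Since $\mu$ is a multiplicative ordinal strictly greater than $\omega$, we have $\mu\geq\omega^\omega$ (recalling from the discussion after Theorem \ref{thm:structureNolambda} that the multiplicative ordinals are exactly the ordinals of the form $\omega^{\omega^\alpha}$), hence in particular $\omega+1 < \mu$. Both $\length{\omega^{g(a)}}$ and $\omega+1$ are therefore ordinals strictly smaller than $\mu$. By the very definition of multiplicativity, any product of two ordinals strictly below $\mu$ is again strictly below $\mu$, so
\[
	\length{\omega^{g(a)}}\otimes(\omega+1) < \mu.
\]
Combined with Lemma \ref{lem:lengthOmegaGA}, this yields $\length a < \mu$, contradicting the hypothesis $\length a \geq \mu$.

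There is no real obstacle: everything non-trivial has already been established in Lemma \ref{lem:lengthOmegaGA}, whose proof required a careful analysis of the sign expansion via Lemma \ref{lem:gomegacCasSpec} and Theorem \ref{thm:serieToSignExp}. The only subtlety to keep in mind when writing the proof is that multiplicativity needs to hold for whichever ordinal product $\otimes$ was fixed (ordinary or natural), but this is the case for both, since $\omega^{\omega^\alpha}$ is closed under both operations when its two factors are strictly smaller.
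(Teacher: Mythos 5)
Your proof is correct and is essentially the same argument as the paper's: argue by contradiction, note $\omega+1<\mu$, and use the closure of the multiplicative ordinal $\mu$ under products together with Lemma \ref{lem:lengthOmegaGA}. The extra remark about which ordinal product is used is fine but not needed, since the paper fixes $\otimes$ to be the usual ordinal product.
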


\begin{proof}
	Assume the that $\length{\omega^{g(a)}}<\mu$. Then using Lemma \ref{lem:lengthOmegaGA}, $\mu\leq \length{\omega^{g(a)}}\otimes(\omega+1)$. Since $\mu$ is a multiplicative ordinal greater than $\omega$, we have $\omega+1<\mu$. $\mu$ is a multiplicative ordinal, hence $\length{\omega^{g(a)}}\otimes(\omega+1)<\mu$ and we reach a contradiction.
\end{proof}

\subsection{Gonshor's logarithm}

We already know that a logarithm exist over positive surreal numbers. Nevertheless we were very elliptical and we now get deeper into it.

\begin{Definition}
	For a surreal number $a$ in canonical representation $a=\crotq{a'}{a''}$, we define
	$$
		\ln\omega^a=\crotq{\begin{array}{c}
				\enstq{\ln\omega^{a'}+n}{\begin{array}{c}n\in\Nbb\\ a'\sqsubset a\\ a'<a\end{array}}\\
				\enstq{\ln\omega^{a''}-\omega^{\frac{a''-a}n}}{\begin{array}{c}n\in\Nbb\\ a''\sqsubset a\\ a<a''\end{array}}
			\end{array}}{\begin{array}{c}
				\enstq{\ln\omega^{a''}-n}{\begin{array}{c}n\in\Nbb\\ a''\sqsubset a\\ a<a''\end{array}}\\
				\enstq{\ln\omega^{a'}+\omega^{\frac{a-a'}{n}}}{\begin{array}{c}n\in\Nbb\\ a'\sqsubset a\\ a'<a\end{array}}
			\end{array}}
	$$
\end{Definition}
As often with these definition, the uniformity property holds.
\begin{Lemma}[{\cite[Lemma 10.1]{gonshor1986introduction}}]
	The definition of $\ln\omega^a$ does not require $a$ in canonical representation.
\end{Lemma}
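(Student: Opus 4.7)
The plan is to proceed by transfinite induction on the length of $a$, combined with Conway's simplicity theorem for surreal brackets. Let $\{A^{\star} \mid B^{\star}\}$ denote the canonical representation of $a$, let $\{A \mid B\}$ be an arbitrary representation, and write $L^{\star}, R^{\star}$ and $L_A, R_A$ for the left/right sets obtained by substituting each representation into Gonshor's defining formula. The induction hypothesis ensures that every value $\ln\omega^{b}$ with $b \sqsubset a$ is already uniformly defined. By Conway's simplicity theorem, it then suffices to verify two things: (i) $L_A < \ln\omega^a < R_A$, and (ii) $\ln\omega^a$ is the simplest surreal number in the interval $(L_A, R_A)$.

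For (ii), my strategy is to establish the containment $(L_A, R_A) \subseteq (L^{\star}, R^{\star})$: since the canonical definition already gives $\ln\omega^a$ as the simplest element of $(L^{\star}, R^{\star})$, this inclusion transfers the simplicity claim to the narrower interval. The inclusion in turn reduces to showing each element of $L^{\star}$ is dominated by some element of $L_A$, and dually for $R^{\star}$ versus $R_A$. For instance, given $\ln\omega^{a'^{\star}} + n \in L^{\star}$ with $a'^{\star} \in A^{\star}$, I would argue that some $\alpha \in A$ must satisfy $\alpha \geq a'^{\star}$: otherwise $A < a'^{\star} < B$ (since $B > a > a'^{\star}$), and the simplicity of $a = \{A \mid B\}$ would force $a \sqsubseteq a'^{\star}$, contradicting $a'^{\star} \sqsubset a$ strict. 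Monotonicity of $x \mapsto \ln\omega^x$ then yields $\ln\omega^{a'^{\star}} + n \leq \ln\omega^{\alpha} + n \in L_A$. The other three families of terms (and the analogous statements for $R^{\star}, R_A$) follow by parallel arguments, using $B$ in place of $A$ where appropriate.

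The main obstacle is condition (i), specifically showing $\ln\omega^{a'} + n < \ln\omega^a$ for $a' \in A$ that is not necessarily a prefix of $a$. When $a' \in A^{\star}$ the inequality is already part of the canonical definition. The delicate case is when $a' < a$ but $a'$ lies strictly above every element of $A^{\star}$---for instance $a = \omega$ with $a' = \omega - 1$, admitted by a non-canonical representation---in which case one needs the general infinite-gap property $\ln\omega^a - \ln\omega^{a'} > n$ for every $n \in \mathbb{N}$ and every $a' < a$, not only for prefixes. My plan is to prove this jointly with uniformity in a single induction: one can squeeze $\ln\omega^{a'}$ below $\ln\omega^a$ by combining the upper approximations $\ln\omega^{a''^{\star}} - \omega^{(a''^{\star} - a)/m} \in L^{\star}$ with monotonicity, choosing $a''^{\star} \in B^{\star}$ and $m$ so that the perturbation $\omega^{(a''^{\star} - a)/m}$ is controllable relative to the gap. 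Making this quantitative, so that the additive shift by $n$ is absorbed by the infinite nature of the gap, is the technical heart of the argument; the bounds for $R_A$ are then obtained by a mirror-image reasoning.
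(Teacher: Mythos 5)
The paper itself does not prove this lemma at all: it simply cites Gonshor's Lemma 10.1, so your attempt has to be judged on its own terms. Your overall architecture is the right shape (transfinite induction plus the simplicity argument reducing $\{L_A \mid R_A\} = \ln\omega^a$ to (i) betweenness and (ii) an interval inclusion), and the cofinality step producing $\alpha \in A$ with $\alpha \geq a'^{\star}$ is correct. But the proposal is not a proof: the step you yourself call ``the technical heart'' is exactly where all the content of the lemma lives, and it is left as a plan. Concretely, condition (i) for an arbitrary representation requires, for every $a' \in A$ and $a'' \in B$ (which need not be prefixes of $a$ and may even be longer than $a$) and every natural number $n$, four estimates: $\ln\omega^{a} - \ln\omega^{a'} > n$, $\ln\omega^{a''} - \ln\omega^{a} > n$, $\ln\omega^{a} - \ln\omega^{a'} < \omega^{(a-a')/n}$, and $\ln\omega^{a''} - \ln\omega^{a} < \omega^{(a''-a)/n}$. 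You only discuss the first (the ``infinite gap''), and your claim that the remaining families ``follow by parallel arguments'' is not accurate: the perturbed options force the growth-rate bounds (the third and fourth inequalities), which are a genuinely different kind of estimate from mere infinitude of the gap, and they are also needed inside part (ii) when you try to dominate the options $\ln\omega^{a''^{\star}} - \omega^{(a''^{\star}-a)/n}$ of $L^{\star}$ by options built from some $\beta \in B$ with $\beta \leq a''^{\star}$.

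There is also a circularity risk you pass over silently: you invoke monotonicity of $b \mapsto \ln\omega^b$ for arbitrary (not simplicity-related) arguments, but at the point where this lemma sits in the logical development that fact is not yet available --- it is usually derived later from $\exp\ln\omega^a = \omega^a$ and the monotonicity of $\exp$, both of which come after uniformity. So monotonicity, the gap bounds and the growth bounds all have to be proved for the canonically defined function and carried through one joint induction, with a rank (for instance the natural sum of the lengths of the two arguments being compared) chosen so that each estimate for a pair only uses previously settled instances; until that induction is actually set up and the four inequalities are verified inside it, what you have is a correct identification of the difficulties rather than a proof. This bookkeeping is precisely what Gonshor's treatment supplies, which is why the paper is content to cite it.
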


\begin{Proposition}[{\cite[Theorem 10.8]{gonshor1986introduction}}]
	\label{prop:lnOmegaA}
	For all surreal number $a$, $\ln\omega^a$ is purely infinite.
\end{Proposition}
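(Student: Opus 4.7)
My plan is to argue by transfinite induction on the length of $a$, essentially reproducing the strategy that underlies Gonshor's original proof of Theorem~10.8. The base case is immediate: for $a=0$ we have $\ln\omega^0=\ln 1=0$, which is vacuously purely infinite (its normal form is the empty sum).

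For the inductive step, suppose $\ln\omega^{a'}$ and $\ln\omega^{a''}$ are already known to be purely infinite for every proper prefix of $a$. I would inspect directly the defining cut of $\ln\omega^a$: on the left sit the elements $\ln\omega^{a'}+n$ (for $a'\sqsubset a$, $a'<a$, $n\in\Nbb$) together with $\ln\omega^{a''}-\omega^{(a''-a)/n}$; on the right the symmetric collection. The crucial structural features of this cut are (i) the translations by $\pm n$, which force any filler $y$ to be at infinite (non-Archimedean) distance from every $\ln\omega^{a'}$ and $\ln\omega^{a''}$, and (ii) the translations by $\pm\omega^{(\cdot)/n}$, which bound these infinite distances strictly below every monomial $\omega^{(a-a')/n}$ (resp.\ $\omega^{(a''-a)/n}$).

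The combinatorial heart of the argument is then to recognise that the simplest filler of this cut has the form $\ln\omega^{a_0}+\omega^c$ (or symmetrically $\ln\omega^{a_0}-\omega^c$), where $a_0$ is the prefix of $a$ to which the cut ``converges'' and $c$ is a suitable positive surreal produced by the density analysis above. Once this is established, the conclusion follows cleanly: $\ln\omega^{a_0}$ is purely infinite by the induction hypothesis, $\omega^c$ with $c>0$ is a purely infinite monomial, and the class of purely infinite surreals is an additive subgroup of $\Nobf$, so $\ln\omega^a$ is purely infinite.

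The main obstacle is the rigorous identification of the simplest filler. One must rule out two phenomena: the appearance of a nonzero real additive part, and the appearance of an infinitesimal tail. A nonzero real $r$ tacked onto $\ln\omega^{a_0}$ is overtaken by some $\ln\omega^{a_0}+n$ on the left (or $\ln\omega^{a_0}-n$ on the right), so it cannot appear. An infinitesimal tail $\epsilon$ can always be truncated: the sub-exponential upper bounds given by $\omega^{(a-a')/n}$ ensure that both $y$ and its truncation lie in the same cut, so the truncation is a simpler candidate and $\epsilon$ cannot survive in the simplest filler. Making this last argument airtight relies on Theorem~\ref{thm:serieToSignExp} and a careful reading of how sign sequences compose with normal forms, and this is the step I expect to absorb most of the technical work; once it is in place, the induction closes at once.
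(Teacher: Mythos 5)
The paper itself does not prove this proposition (it is quoted from Gonshor, Theorem 10.8), so your attempt stands on its own, and as written it has a genuine gap at what you call its combinatorial heart. The claim that the simplest filler of the cut has the form $\ln\omega^{a_0}+\omega^{c}$ or $\ln\omega^{a_0}-\omega^{c}$ for some prefix $a_0\sqsubset a$ is false. Already for $a=\frac12$ (whose only prefixes are $0$ and $1$) the filler is $\ln\omega^{1/2}=\frac12\omega^{\omega^{-1}}$, which is neither $0+\omega^{c}$ nor $\omega^{\omega^{-1}}-\omega^{c}$ for any $c$, since a monomial carries coefficient $1$. Allowing a real coefficient $r\omega^{c}$ does not repair it: since $\ln\omega^{a}-\ln\omega^{a_0}=\ln\omega^{a-a_0}$ has exactly as many terms as $a-a_0$, your claim forces $a-a_0$ to be a single term for some prefix $a_0$ of $a$, which fails for instance for $a=\sum_{n<\omega}\omega^{-n}$, where every prefix leaves a difference with infinitely many terms. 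So the step ``once this is established, the conclusion follows cleanly'' never gets off the ground, and the transfinite induction on prefixes is not what can close the argument.

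The part you relegate to ``the main obstacle'' is in fact the whole proof, and it needs neither the induction nor any identification of the filler's shape. Show that if $y$ lies strictly between the left and right options of the cut, then so does its purely infinite truncation $y_\infty$: the options $\ln\omega^{a'}+n$ and $\ln\omega^{a''}-n$ occur for \emph{every} $n\in\Nbb$, so removing an appreciable part of $y$ cannot cross them, and the remaining options differ from the relevant $\ln\omega^{a'}$, $\ln\omega^{a''}$ by the infinite monomials $\omega^{(a-a')/n}$, $\omega^{(a''-a)/n}$, whose cofinal family absorbs any appreciable perturbation. Applying this to $x=\ln\omega^{a}$ itself, and using that a truncation of the normal form is an initial segment of the sign expansion (Theorem \ref{thm:serieToSignExp}), the simplicity property of the cut value gives $x\sqsubseteq x_\infty\sqsubseteq x$, hence $x=x_\infty$ is purely infinite, with no induction hypothesis needed. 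Until that cofinality-and-truncation argument is written out, your text is a plan resting on a false structural claim rather than a proof.
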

Purely infinite numbers are a special case in the definition of the exponential function. We can state the previous definition of $\ln$ is consistent with the one of $\exp$.

\begin{Theorem}[{\cite[Theorem 10.9]{gonshor1986introduction}}]
	\label{thm:expLnOmegaA}
	For all surreal number $a$, $\exp\ln\omega^a=\omega^a$.
\end{Theorem}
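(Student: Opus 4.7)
The plan is to argue by transfinite induction on the simplicity order of $a$, checking that $\omega^a$ sits in the same cut as the one defining $\exp\ln\omega^a$. The base case $a=0$ is immediate: $\ln\omega^0 = \ln 1 = 0$ and $\exp 0 = 1 = \omega^0$. For the inductive step, fix the canonical representation $a = \crotq{a'}{a''}$ and assume $\exp\ln\omega^b = \omega^b$ for all $b \sqsubset a$. Since by Proposition \ref{prop:lnOmegaA} the number $\ln\omega^a$ is purely infinite, Proposition \ref{prop:formeExpXPurelyInfiniteOmegaA} already guarantees $\exp\ln\omega^a = \omega^c$ for some unique surreal $c$, so the task reduces to identifying $c$ with $a$.

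To pin down $c$, I would expand $\exp\ln\omega^a$ by plugging the explicit cut for $\ln\omega^a$ into the cut formula for $\exp$. The left options include, among others, products of the form $\exp(\ln\omega^{a'}+n)\cdot[\ln\omega^a - \ln\omega^{a'} - n]_m$, which by the induction hypothesis together with Theorem \ref{thm:expAppreciables} (applied to the purely infinite summand $\ln\omega^{a'}$ plus the real summand $n$) collapse to $e^n\omega^{a'}\cdot[\ln\omega^a - \ln\omega^{a'} - n]_m$, together with symmetric terms coming from upper prefixes $a''$ and from the $\omega^{(a''-a)/n}$ contributions. The right options admit a dual description with divisions replacing multiplications. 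In parallel, $\omega^a = \crotq{0,\, n\omega^{a'}}{\omega^{a''}/2^n}$ by the definition of $\omega$-exponentiation. The plan is then to verify: (i) every left option of $\exp\ln\omega^a$ is strictly below $\omega^a$; (ii) every right option is strictly above $\omega^a$; (iii) each $n\omega^{a'}$ is majorized by some left option of $\exp\ln\omega^a$ and each $\omega^{a''}/2^n$ minorized by some right option; (iv) simplicity, combined with the fact that $\exp\ln\omega^a$ is already of the form $\omega^c$, forces $c=a$.

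The main obstacle is the Archimedean book-keeping behind (i) and (ii). Concretely, one must show that $e^n\omega^{a'}\cdot[\ln\omega^a-\ln\omega^{a'}-n]_m < \omega^a$ for every $n,m$, and symmetrically that $\omega^{a''}/(e^n\cdot[\ln\omega^{a''}-\ln\omega^a-n]_{2m+1}) > \omega^a$. These rest on the facts that $\ln\omega^a-\ln\omega^{a'}$ is purely infinite and positive for $a' < a$, so the truncations $[\cdot]_m$ are positive yet lie in a strictly smaller Archimedean class than $\omega^{a-a'}$; a dual estimate handles the upper options, and entirely analogous comparisons deal with the options involving $\omega^{(a-a')/n}$ and $\omega^{(a''-a)/n}$. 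Once these inequalities are established, the simplicity characterization of the cut closes the induction and yields $\exp\ln\omega^a = \omega^a$.
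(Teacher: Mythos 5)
The paper does not prove this statement: it is quoted directly from Gonshor (Theorem 10.9). Your plan — transfinite induction on simplicity, expanding $\exp\ln\omega^a$ over the defining cut of $\ln\omega^a$, collapsing options via the induction hypothesis, and comparing with $\omega^a=\crotq{0,\,n\omega^{a'}}{\omega^{a''}/2^n}$ by mutual cofinality — is essentially the argument of that source, and the skeleton (i)--(iv) together with Propositions \ref{prop:lnOmegaA} and \ref{prop:formeExpXPurelyInfiniteOmegaA} is the right one (in fact, once (i)--(iii) are in place, simplicity already gives equality and (iv) is redundant). Two tacit ingredients should be made explicit: plugging the non-canonical representation of $\ln\omega^a$ into the defining formula of $\exp$ uses the uniformity of that definition, and collapsing $\exp(\ln\omega^{a'}+n)$ to $e^{n}\omega^{a'}$ uses the induction hypothesis together with Theorem \ref{thm:expAppreciables}; both are legitimate and non-circular, but they are part of the proof.

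The genuine gaps are in the Archimedean book-keeping, at two places. First, your stated reason for $[\ln\omega^a-\ln\omega^{a'}-n]_m\prec\omega^{a-a'}$, namely that $\ln\omega^a-\ln\omega^{a'}$ is purely infinite and positive, does not suffice: a positive purely infinite number may well be $\succeq\omega^{a-a'}$. What makes the estimate true is the defining cut of $\ln\omega^a$ itself, i.e. $\ln\omega^a<\ln\omega^{a'}+\omega^{(a-a')/k}$ for every $k$, so that choosing $k>m$ gives $\bigl(\ln\omega^a-\ln\omega^{a'}-n\bigr)^m<\omega^{m(a-a')/k}\prec\omega^{a-a'}$; dually, $\ln\omega^{a''}-\ln\omega^a<\omega^{(a''-a)/k}$ handles the upper side. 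Second, the options arising from the cut elements $\ln\omega^{a'}+\omega^{(a-a')/n}$ and $\ln\omega^{a''}-\omega^{(a''-a)/n}$ are not ``entirely analogous'': there the extra summand is purely infinite, not real, so the induction hypothesis plus Theorem \ref{thm:expAppreciables} does not evaluate $\exp$ at them. You need the homomorphism property of $\exp$ (to write, e.g., $\exp(\ln\omega^{a'}+\omega^{(a-a')/n})=\omega^{a'}\exp(\omega^{(a-a')/n})$) and, in addition, a lower bound expressing that $\exp$ of a positive infinite number dominates every fixed power of it — for instance $\exp z>[z]_k\succeq z^k/k!$, which follows from the definition of $\exp$ because $0$ is a lower prefix of $z$. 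The ``log is small'' bounds alone do not dispose of these options. With these points supplied your argument closes; as written, they are missing.
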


\begin{Theorem}[{\cite[Theorem 10.12]{gonshor1986introduction}}]
	\label{thm:lnOmegaOmegaA}
	For all surreal number $a$, $\ln\omega^{\omega^a}=\omega^{h(a)}$.
\end{Theorem}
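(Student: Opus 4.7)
The plan is to invoke the identity $\exp \ln \omega^{\omega^a} = \omega^{\omega^a}$ from Theorem~\ref{thm:expLnOmegaA} and compare the two sides term by term using the explicit description of $\exp$ on purely infinite numbers (Proposition~\ref{prop:formeExpXPurelyInfiniteOmegaAg}). Since $\ln \omega^{\omega^a}$ is purely infinite by Proposition~\ref{prop:lnOmegaA}, its normal form has only strictly positive exponents, so that description applies directly.

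Concretely, I would first write
$$\ln \omega^{\omega^a} = \sum_{i<\nu} r_i \omega^{b_i}$$
in normal form, with $b_i > 0$ strictly decreasing and each $r_i \in \Rbb^{*}$. Applying $\exp$ and invoking Proposition~\ref{prop:formeExpXPurelyInfiniteOmegaAg} yields
$$\omega^{\omega^a} \;=\; \exp \ln \omega^{\omega^a} \;=\; \omega^{\sum_{i<\nu} r_i \omega^{g(b_i)}},$$
and comparing the exponents of $\omega$ on both sides gives $\sum_{i<\nu} r_i \omega^{g(b_i)} = \omega^a$. The right-hand side is in normal form as a single term with coefficient $1$ and exponent $a$, so by uniqueness of the normal form (Theorem~\ref{thm:normalForm}) the left-hand side must collapse to the same expression: $\nu = 1$, $r_0 = 1$, and $g(b_0) = a$. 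Because $h$ is the inverse of $g$, this forces $b_0 = h(a)$ and yields $\ln \omega^{\omega^a} = \omega^{h(a)}$.

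The main obstacle I anticipate is justifying that $\sum_{i<\nu} r_i \omega^{g(b_i)}$ is already on the nose in normal form, i.e.\ that the sequence $\bigl(g(b_i)\bigr)_{i<\nu}$ is strictly decreasing. This reduces to strict monotonicity of $g$ on $\Nobf^{>0}$, which is not obvious from the defining recursion $g(x)=\crotq{c(x),g(x')}{g(x'')}$ and needs its own transfinite induction on the simpler-than relation, using that $c(\cdot)$ is itself weakly monotone and branching on the sign expansion of $x_1 < x_2$ at their simplest common descendant; alternatively, one can extract monotonicity of $g$ from monotonicity of $h$, which is evident from the defining formula of $h$. Once this monotonicity is secured, the uniqueness of the normal form locks in the identification $b_0 = h(a)$, and the proof is complete.
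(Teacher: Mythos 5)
Your derivation is correct, but it is not the route the paper takes: the paper offers no proof of this statement at all, citing Gonshor's Theorem 10.12 and remarking that the identity follows from the proof there, which is a direct transfinite/cofinality computation on the inductive definition of $\ln\omega^x$ specialised to $x=\omega^a$, out of which the bracket recursion for $h$ emerges. Your argument is instead a ``soft'' one assembled from the surrounding statements: pure infiniteness of $\ln\omega^{\omega^a}$ (Proposition~\ref{prop:lnOmegaA}), the exponential formula via $g$ (Proposition~\ref{prop:formeExpXPurelyInfiniteOmegaAg}), the identity $\exp\ln\omega^x=\omega^x$ (Theorem~\ref{thm:expLnOmegaA}), uniqueness of normal forms (Theorem~\ref{thm:normalForm}), and the fact that $h=g^{-1}$. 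Two remarks. First, the monotonicity point you flag is real but is already implicit in the cited exponential formula: for $\omega^{\sum_{i<\nu} r_i\omega^{g(a_i)}}$ to be a well-formed normal form the sequence $\bigl(g(a_i)\bigr)_{i<\nu}$ must be decreasing, and strict monotonicity of $g$ (like that of $h$) follows from the prefix/longest-common-prefix case analysis you sketch, since lower (nonzero) prefixes of $x$ contribute left options $g(x')$ and upper prefixes contribute right options $g(x'')$; this is a standard lemma of Gonshor you may quote rather than reprove. Second, you can bypass the normal-form collapse and the monotonicity of $g$ entirely: since the bracket defining $h$ has $0$ as a left option, $h(a)>0$, so $\omega^{h(a)}$ is purely infinite and Proposition~\ref{prop:formeExpXPurelyInfiniteOmegaAg} gives $\exp\bigl(\omega^{h(a)}\bigr)=\omega^{\omega^{g(h(a))}}=\omega^{\omega^a}$; applying $\ln$ yields the claim at once. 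The only caveat is one of provenance rather than validity: in Gonshor's development the relation $h=g^{-1}$ is established essentially alongside Theorem 10.12, so an argument resting on it is not independent of the source; within this paper's presentation, where $h=g^{-1}$ is stated as known, your derivation (and the shortcut above) is perfectly legitimate.
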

The above theorem is not actually stated like this in \cite{gonshor1986introduction} but this statement follows from the proof there.

As a consequence of Theorems \ref{thm:expLnOmegaA} and \ref{thm:lnOmegaOmegaA} and Propositions \ref{prop:lnOmegaA} and \ref{prop:formeExpXPurelyInfiniteOmegaAg}, we have 

\begin{Corollary}
	For all surreal number $a=\aSurreal$, we have
	$$
		\ln\omega^a = \Sumlt i\nu r_i\omega^{h(a_i)}
	$$ 
\end{Corollary}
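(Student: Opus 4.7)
The strategy is to produce a candidate $y \in \No$ for the right-hand side, show that it is purely infinite and in normal form, apply the exponential via Proposition~\ref{prop:formeExpXPurelyInfiniteOmegaAg}, and then exploit the injectivity of $\exp$ to conclude equality with $\ln \omega^a$.

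Concretely, set $y := \Sumlt i\nu r_i \omega^{h(a_i)}$. I would first verify that this expression is a legitimate normal form, i.e.\ that the sequence $(h(a_i))_{i<\nu}$ is strictly decreasing. Since the $a_i$ are strictly decreasing by hypothesis and $h = g^{-1}$ is a strictly order-preserving bijection (readable off the uniform defining equations of $g$ and $h$ together with the simplicity arguments from \cite{gonshor1986introduction}), this is immediate. Second, I would observe that $h(b) > 0$ for every $b$, because the defining equation $h(b) = \crotq{0, h(b')}{h(b''), \omega^b/n}$ places $0$ in the left option set, forcing $h(b) > 0$. Hence every exponent $h(a_i)$ is positive, so $y$ is purely infinite.

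With these in hand, Proposition~\ref{prop:formeExpXPurelyInfiniteOmegaAg} applies to $y$ and gives directly
$$ \exp y = \omega^{\Sumlt i\nu r_i \omega^{g(h(a_i))}} = \omega^{\Sumlt i\nu r_i \omega^{a_i}} = \omega^a, $$
where the middle equality uses $g \circ h = \mathrm{id}$ and the last is just the normal form of $a$. Since $\exp$ is a bijection from $\No$ onto $\No^{>0}$, applying $\ln$ to both sides yields $y = \ln \omega^a$, which is the claimed identity.

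The only point requiring real care — and therefore the main potential obstacle — is the verification that $h$ is strictly order-preserving, so that $(h(a_i))_i$ is genuinely decreasing and $y$ is a bona fide normal form of a purely infinite surreal (a prerequisite for invoking Proposition~\ref{prop:formeExpXPurelyInfiniteOmegaAg}). Everything else reduces to the already-quoted facts $g \circ h = \mathrm{id}$ and the bijectivity of $\exp$, so once the monotonicity of $h$ is in place, the corollary collapses to a one-line computation.
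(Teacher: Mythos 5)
Your argument is correct, and it is essentially the mirror image of the route the paper intends (the paper gives no written proof, only citing Propositions \ref{prop:lnOmegaA} and \ref{prop:formeExpXPurelyInfiniteOmegaAg} and Theorems \ref{thm:expLnOmegaA} and \ref{thm:lnOmegaOmegaA}). You build the candidate $y=\sum_{i<\nu} r_i\omega^{h(a_i)}$, check it is a genuine normal form of a purely infinite number, apply Proposition \ref{prop:formeExpXPurelyInfiniteOmegaAg} and $g\circ h=\mathrm{id}$, and finish by injectivity of $\exp$; the paper's implicit direction instead starts from $\ln\omega^a$, uses Proposition \ref{prop:lnOmegaA} to write it as a purely infinite number $\sum_{i<\nu'} s_i\omega^{b_i}$, applies Proposition \ref{prop:formeExpXPurelyInfiniteOmegaAg} and Theorem \ref{thm:expLnOmegaA} to get $\sum_{i<\nu'} s_i\omega^{g(b_i)}=a$, and concludes $s_i=r_i$, $b_i=h(a_i)$ by uniqueness of the normal form (Theorem \ref{thm:normalForm}). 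The practical difference is small but real: in the forward direction the strict decrease of the exponents comes for free from the normal form of $\ln\omega^a$, whereas your direction must verify up front that $h$ is strictly increasing — which you correctly flag as the one delicate point. That fact does hold, though your justification ("readable off the uniform defining equations") is the weakest link; the clean argument is that $g$ is strictly increasing because $a<b$ in $\No^{>0}$ gives $\omega^a<\omega^b$, hence $\omega^{\omega^{g(a)}}=\exp\omega^a<\exp\omega^b=\omega^{\omega^{g(b)}}$ and so $g(a)<g(b)$, and $h=g^{-1}$ inherits strict monotonicity. Your observation that $h(b)>0$ (since $0$ is a left option in the defining expression of $h(b)$) correctly guarantees pure infiniteness of $y$ even when some $a_i\le 0$, so the corollary is obtained in full generality; with the monotonicity point patched as above, your proof is complete.
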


Finally, since for appreciable numbers $\exp$ is defined by its usual serie, $\ln(1+x)$ is also defined by its usual serie when $x$ in infinitesimal. More precisely,

\begin{Definition}
	\label{def:lnAppreciables}
	For $x$ an infinitesimal, 
	$$
		\ln(1+x) = \Sum{i=1}{\infty}\frac{(-1)^{i-1}x^i}{i}
	$$
\end{Definition}
And thanks to Theorem \ref{thm:expAppreciables},

\begin{Corollary}
	Let $a=\aSurreal$ a positive surreal number. Then
	$$
		\ln a = \ln\omega^{a_0} + \ln r_0 + \ln\pa{1+\Sumlt{1\leq i}\nu \frac{r_i}{r_0}\omega^{a_i-a_0}}
	$$
	where the last term is defined in Definition \ref{def:lnAppreciables}.
\end{Corollary}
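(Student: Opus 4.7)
The plan is to reduce $\ln a$ to the three pieces on the right-hand side by factoring out the leading monomial of the normal form, which is the natural surreal analogue of writing a positive real as $r \omega^{a_0} \cdot (1 + \text{small})$ and expanding the logarithm. Writing $a=\Sumlt i\nu r_i\omega^{a_i}$ in normal form with the $a_i$ strictly decreasing, since $a>0$ the leading coefficient $r_0$ is positive, so $r_0\omega^{a_0}$ is positive and we may set
$$
\epsilon = \Sumlt{1\leq i}\nu \frac{r_i}{r_0}\omega^{a_i-a_0},
$$
so that $a = r_0\omega^{a_0}(1+\epsilon)$. I would verify this factorization as a formal identity using Theorem \ref{thm:normalFormOp}: the product of $r_0\omega^{a_0}$ with $1+\epsilon$ expands termwise to $\Sumlt i\nu r_i\omega^{a_i}$, and the sequence $(a_i-a_0)_{i\geq 1}$ is still strictly decreasing, so $\epsilon$ is itself in normal form.

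The first key step is then to check that $\epsilon$ is infinitesimal. Since the exponents $a_i$ are strictly decreasing with $a_i<a_0$ for all $i\geq 1$, every exponent $a_i-a_0$ occurring in $\epsilon$ is strictly negative; hence each $\omega^{a_i-a_0}$ is infinitesimal and $\epsilon$, being a normal form with only negative exponents, lies in the infinitesimal part of $\No$. In particular $1+\epsilon$ is appreciable and positive, so that Definition \ref{def:lnAppreciables} applies and $\ln(1+\epsilon)$ is a well-defined surreal number.

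The second step is to apply $\ln$ to the factorization and use that $\ln$ is a group homomorphism from $(\No^{>0},\cdot)$ to $(\No,+)$. This homomorphism property is immediate from the fact recalled in Section \ref{sec:expoLn} that $\exp:(\No,+)\to(\No^{>0},\cdot)$ is a surjective homomorphism and $\ln$ is its inverse. Applying $\ln$ to $a=r_0\omega^{a_0}(1+\epsilon)$ yields
$$
\ln a \;=\; \ln(r_0\omega^{a_0}) + \ln(1+\epsilon) \;=\; \ln\omega^{a_0} + \ln r_0 + \ln(1+\epsilon),
$$
which is exactly the claimed formula once one substitutes the definition of $\epsilon$.

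The only genuinely non-trivial point is to be sure that the three terms on the right are all legitimate surreal numbers and combine correctly: $\ln\omega^{a_0}$ is defined by Gonshor's formula (and is purely infinite by Proposition \ref{prop:lnOmegaA}), $\ln r_0$ is the usual real logarithm (which agrees with the surreal $\ln$ because $\R$ embeds elementarily into $(\No,+,\cdot,\exp)$), and $\ln(1+\epsilon)$ is given by the convergent series of Definition \ref{def:lnAppreciables}. The main obstacle I anticipate is merely bookkeeping: making sure the factorization is handled inside the normal-form formalism of Theorem \ref{thm:normalFormOp} rather than as a naive manipulation, and that the sum $\epsilon$ really defines a surreal (which it does because the set $\{a_i-a_0 : 1\leq i<\nu\}$ inherits well-orderedness from $\supp(a)$). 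Once these are in place, the corollary follows directly from the homomorphism property of $\ln$.
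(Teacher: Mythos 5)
Your proof is correct and follows essentially the same route as the paper: factor $a = r_0\omega^{a_0}(1+\epsilon)$ with $\epsilon$ infinitesimal (justified via the normal-form arithmetic of Theorem \ref{thm:normalFormOp}) and then split the logarithm using the multiplicativity of $\exp$/additivity of $\ln$, with Theorem \ref{thm:expAppreciables}, Proposition \ref{prop:lnOmegaA} and Definition \ref{def:lnAppreciables} guaranteeing that the three pieces are the intended ones. No gaps worth flagging.
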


\subsection{Stability of $\Nolambda$ by exponential and logarithm}

We first recall some result by van den Dries and Ehrlich.

\begin{Lemma}[{\cite[Lemmas 5.2, 5.3 and 5.4]{DriesEhrlich01}}]
	\label{lem:lengthExpLog}
	For all surreal number $a\in\Nobf$,
	\begin{itemize}
		
		\item $\length{\exp a}\leq\omega^{\omega^{2\length a\oplus3}}$
		
		\item $\length{\ln\omega^a} \leq \omega^{4\omega\length{a}\length{a}}$
		
		\item $\length{\ln a}\leq\omega^{\omega^{3\length a\oplus3}}$
	\end{itemize}
\end{Lemma}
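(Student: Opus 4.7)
The plan is to treat the three bounds in the order $\length{\ln\omega^a}$, then $\length{\ln a}$, then $\length{\exp a}$, since the first directly feeds into the second, and the third runs symmetrically to the first with $g$ playing the role of $h$. For the $\ln\omega^a$ bound, I would start from $\ln\omega^a=\Sumlt i\nu r_i\omega^{h(a_i)}$ (the corollary of Theorem \ref{thm:lnOmegaOmegaA} applied termwise, recorded just above Definition \ref{def:lnAppreciables}). Since this is already a normal form, Theorem \ref{thm:serieToSignExp} expresses $\length{\ln\omega^a}$ as a Hessenberg sum of $\nu\leq\length a$ contributions of the shape $\length{r_i\omega^{h(a_i)^\circ}}$. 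Each such contribution is bounded by combining Lemma \ref{lem:lengthH} ($\length{h(a_i)}\leq\omega^{\length{a_i}+1}$), Lemma \ref{lem:lengthOmegaA} ($\length{\omega^{h(a_i)}}\leq\omega^{\length{h(a_i)}}$) and the real-coefficient clause of Theorem \ref{thm:serieToSignExp} (which adds a factor of order $\omega^{|h(a_i)|_+}$). Using Lemma \ref{lem:lengthTerm} to collapse every $\length{a_i}$ into $\length a$ and folding all ordinal contributions into a single outer $\omega^{\cdot}$ yields the claimed $\omega^{4\omega\length a\length a}$.

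For the $\ln a$ bound, I would write $a=r_0\omega^{a_0}(1+\epsilon)$ with $\epsilon$ infinitesimal, so that $\ln a=\ln\omega^{a_0}+\ln r_0+\ln(1+\epsilon)$. The first summand falls under the previous step applied to $a_0$, with $\length{a_0}\leq\length a$ from Lemma \ref{lem:lengthTerm}; the real logarithm $\ln r_0$ has length at most $\omega$; and $\ln(1+\epsilon)$, defined by the Taylor series of Definition \ref{def:lnAppreciables}, has length controlled by tracking the supports of successive powers $\epsilon^n$ through Theorem \ref{thm:normalFormOp}, using $\length\epsilon\leq\length a$. Hessenberg-summing the three pieces gives $\omega^{\omega^{3\length a\oplus3}}$. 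For the $\exp a$ bound I would split $a=P+r+\epsilon$ into purely infinite, real and infinitesimal parts and use $\exp a=\exp(P)\cdot\exp(r)\cdot\exp(\epsilon)$ from Theorem \ref{thm:expAppreciables}. The dominant factor is $\exp(P)$, rewritten by Proposition \ref{prop:formeExpXPurelyInfiniteOmegaAg} as $\omega^{\Sumlt i\nu r_i\omega^{g(a_i)}}$; here Lemma \ref{lem:lengthGA} gives an \emph{additive} bound $\length{g(a_i)}\leq\length{a_i}+1$, and two nested applications of Lemma \ref{lem:lengthOmegaA} produce the double-exponential $\omega^{\omega^{2\length a\oplus3}}$, into which the appreciable factor $\exp(r)\exp(\epsilon)$ (of length at most $\omega^\omega$) is absorbed through Theorem \ref{thm:normalFormOp}.

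The main obstacle is that naively iterating Lemmas \ref{lem:lengthOmegaA}, \ref{lem:lengthH} and \ref{lem:lengthGA} produces bounds with one too many levels of $\omega$-exponentiation. The proof must exploit two structural savings. First, in $\ln\omega^a$ the function $h$ is applied only to the exponents $a_i$ of the normal form, not to $a$ itself, so one pays Lemma \ref{lem:lengthH} once \emph{inside} a Hessenberg sum of $\leq\length a$ terms rather than nested inside an outer $\omega^{\cdot}$; this is what keeps the bound polynomial-in-$\length a$ inside the outermost power. Second, the costly operation in $\exp$ is $g$, whose length cost is only $+1$, so the outer $\omega^{\cdot}$ of Proposition \ref{prop:formeExpXPurelyInfiniteOmegaAg} contributes only one extra level of $\omega$-exponentiation rather than two. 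The finicky heart of the argument is then correctly tracking when to use the Hessenberg sum $\oplus$ versus ordinal multiplication, and discarding the $\cdot{}^\circ$ reduction (Definition \ref{def:reducedSignExpansion}) only up to an acceptable overcount absorbed into the stated exponent.
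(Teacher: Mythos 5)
Your strategy for the middle bullet does not reach the stated bound, and this is a genuine gap rather than a bookkeeping issue. For a term of $\ln\omega^a=\Sumlt i\nu r_i\omega^{h(a_i)}$ your chain is $\length{\omega^{h(a_i)}}\leq\omega^{\length{h(a_i)}}$ (Lemma \ref{lem:lengthOmegaA}) combined with $\length{h(a_i)}\leq\omega^{\length{a_i}+1}$ (Lemma \ref{lem:lengthH}), which already gives $\omega^{\omega^{\length{a_i}+1}}$ for a \emph{single} term. This is doubly exponential in $\length{a_i}\leq\length a$, whereas the claimed bound $\omega^{4\omega\length a\length a}$ is only singly exponential; for infinite $\length a$ one has $\omega^{\length a+1}>4\omega\length a\length a$, so no amount of ``folding into a single outer $\omega^{\cdot}$'' over the $\nu\leq\length a$ terms can recover the statement. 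Your proposed structural saving (that $h$ is paid once inside the Hessenberg sum rather than nested) does not address this, because the extra level of exponentiation is created inside one term by composing Lemma \ref{lem:lengthH} with Lemma \ref{lem:lengthOmegaA}, not by the summation. What is needed is a genuinely sharper estimate of $\length{\omega^{h(b)}}=\length{\ln\omega^{\omega^b}}$ itself, e.g.\ by exploiting that $\length{\omega^x}$ is governed by the \emph{number of pluses} of $x$ (as in Theorem \ref{thm:serieToSignExp}) and that $h(b)$ has few pluses, or, as in the cited source, by an induction on $\length a$ using the inductive cut definition of $\ln\omega^a$ together with a lemma bounding the length of $\crotq AB$ by the lengths of its options; neither ingredient appears in your sketch.

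Two further remarks. First, the present paper gives no proof of this lemma (it is quoted from van den Dries--Ehrlich), and their argument proceeds by induction on length through the cut definitions of $\exp$ and $\ln\omega^a$ rather than through normal forms and the $g$/$h$ bounds; your route is therefore genuinely different, and for the first and third bullets it is plausible, since there the targets are doubly exponential and Lemma \ref{lem:lengthGA} is only additive --- though you still rely on unstated auxiliary bounds (lengths of sums and products of Hahn series, lengths of exponents in the monoid generated by $\supp\epsilon$) that must be made explicit. Second, your derivation of the $\ln a$ bound quotes the $\ln\omega^{a_0}$ bound; fortunately the weaker doubly exponential estimate your method does yield would suffice there, so the irreparable point is the second bullet itself, which your proposal does not prove.
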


\begin{Corollary}[{\cite[Corollary 5.5]{DriesEhrlich01}}]
	\label{cor:NolambdaStableExpLn}
	For $\lambda$ an $\epsilon$-number, $\Nolt\lambda$ is stable under $\exp$ and $\ln$.
\end{Corollary}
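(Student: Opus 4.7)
The plan is to read stability directly off the length bounds established in Lemma \ref{lem:lengthExpLog}, using only the defining equation $\omega^\lambda=\lambda$ of an $\epsilon$-number.

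First I would isolate the two closure properties of $\lambda$ that are needed. Since $\lambda$ is an $\epsilon$-number, we have $\lambda=\omega^\lambda=\omega^{\omega^\lambda}$, so $\lambda$ is both additively and multiplicatively principal; in particular $\lambda$ is closed under Hessenberg sum $\oplus$. Moreover, for any $\beta<\lambda$ we have $\omega^\beta<\omega^\lambda=\lambda$, so $\lambda$ is closed under the map $\beta\mapsto\omega^\beta$.

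Next, given any $a\in\Nolt\lambda$, set $\alpha=\length a<\lambda$. Iterating the two closure properties gives $2\alpha\oplus 3<\lambda$, then $\omega^{2\alpha\oplus 3}<\lambda$, and finally $\omega^{\omega^{2\alpha\oplus 3}}<\lambda$. By the first estimate of Lemma \ref{lem:lengthExpLog}, this majorises $\length{\exp a}$, so $\exp a\in\Nolt\lambda$. The same argument applied to the third estimate of Lemma \ref{lem:lengthExpLog} (with $3\alpha\oplus 3$ in place of $2\alpha\oplus 3$) shows that $\length{\ln a}<\lambda$ whenever $a>0$, hence $\ln a\in\Nolt\lambda$.

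I do not expect any substantive obstacle: the entire content of the corollary is already encoded in the length bounds of Lemma \ref{lem:lengthExpLog}, and the $\epsilon$-number hypothesis is used purely to justify the three elementary closure steps $\oplus$, $\omega^{(-)}$, $\omega^{(-)}$. The only point requiring mild care is that the small natural-number constants appearing inside the bounds (the additive $3$'s and the multipliers $2$ and $3$) get absorbed by the additive closure of $\lambda$, which is immediate since any $\epsilon$-number is additively principal.
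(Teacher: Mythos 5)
Your argument is correct and is exactly the intended derivation: the paper cites this corollary from van den Dries--Ehrlich without spelling out a proof, and it is meant to follow immediately from the length bounds of Lemma \ref{lem:lengthExpLog} together with the closure of an $\epsilon$-number under $\oplus$ and $\beta\mapsto\omega^\beta$, which is precisely what you do. No gaps to report.
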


We have (already stated in the introduction, recalled here for readability and for stating the proof). 

\thNoltStableExpLn*

\begin{proof}
	Using  Theorem \ref{thm:structureNolambda} we already know that $\Nolambda$ is a field is an only if $\lambda$ is an $\epsilon$-number. Corollary \ref{cor:NolambdaStableExpLn} ensure that if $\lambda$ is an $\epsilon$-number, $\Nolambda$ is stable under exponential and logarithm. The last thing to prove is that if $\lambda$ is not an $\epsilon$-number, then $\Nolambda$ is not stable under one of theses functions. Then, let $\lambda$ be an ordinal which is not an $\epsilon$-number. Write in in the Cantor normal form as
	
	$$ \lambda=\Sum{i=0}{n}\omega^{\alpha_i}n_i$$
	with $n$ a natural number as well as the coefficients $n_i$ and $(\alpha_0,\dots,\alpha_n)$ being a finite decreasing sequence of ordinals. Since $\lambda$ is not an $\epsilon$-number, $\alpha_0<\lambda$. In particular, $\alpha_0\in\Nolambda$. If $\lambda=\omega^{\alpha_0}$ then Lemma \ref{lem:lengthExpLog} give that $\exp(\alpha_0)$ has length at least $\lambda$. Therefore $\Nolambda$ is not stable under exponential. If not, $\lambda>\omega^{\alpha_0}$ and then $\exp\left(\omega^{\alpha_0}\right)=\omega^{\omega^{g(\alpha_0)}}$. By Proposition \ref{prop:gord}, $g(\alpha_0)$ is an ordinal number and 
	
	$$ g(\alpha_0)=\begin{accolade}
		\alpha_0+1 & \text{if } \lambda'\leq \alpha_0 <\lambda'+\omega \text{ for some }\epsilon\text{-number }\lambda'\\
		\alpha_0& \text{otherwise}
	\end{accolade}$$
	
	In both cases we have $\alpha_0<\omega^{g(\alpha_0)}$. Therefore, by Lemma \ref{lem:lengthExpLog}, $\exp\left(\omega^{\alpha_0}\right)$ has length at least $\omega^{\omega^{\alpha_0}}$ which is greater than $\lambda$. 
\end{proof}

\subsection{An instability result of the decomposition of $\Nolambda$}

One point about Theorem \ref{th:Ehrlichquatresept} is that it establishes that $\Nolambda$ can be expressed as an increasing union of fields. However, even if $\Nolt\lambda$ is stable under $\exp$ and $\ln$ (Theorem \ref{thm:NoltStableExpLn}) none of the fields in this union has stability property beyond the fact that they are fields. Indeed, we have the following proposition (already stated in the introduction, recalled here):

\firstinstable*

\begin{proof}
	If $\mu$ is a multiplicative ordinal but not an $\epsilon$-number, $\mu=\omega^{\omega^\alpha}$ for some ordinal $\alpha<\mu$. Since $g(\omega^\alpha)\geq \omega^\alpha$ (Proposition \ref{prop:gord}), Proposition \ref{prop:formeExpXPurelyInfiniteOmegaAg} ensures that $\exp(\omega^\alpha)=\omega^{\omega^{g(\alpha)}}\geq \mu$. Therefore, $\exp(\omega^\alpha)\notin\SRF\lambda{\Nolt\mu}$.

	Now, if $\mu$ is an $\epsilon$-number. Take $x=\Sum{0<i<\mu}{}\omega^{\omega^{-i}}$. Then by Propositions \ref{prop:formeExpXPurelyInfiniteOmegaAg} and \ref{prop:gMonomeInfinitesimal} ensure that $\exp(x)=\omega^{\Sum{0<i<\mu}{}\omega^{g(\omega^{-i})}}=\omega^{\Sum{0<i<\mu}{}\omega^{-i}}$. Since $\mu$ is an $\epsilon$-number, for $i<\mu$, $\omega^{-i}\in\Nolt\mu$ but $\Sum{0<i<\mu}{}\omega^{-i}\notin\Nolt\mu$ (as a consequence of Theorem \ref{thm:serieToSignExp}, the serie having length $\mu$, the length of the surreal number is at least $\mu$). Therefore $x\in\SRF\lambda{\Nolt\mu}$ and $\exp x\notin\SRF\lambda{\Nolt\mu}$.
\end{proof}

The aim of Theorems \ref{thm:SRFGammaUpStableExpLn} and \ref{thm:NolambdaDecompCorpsStables} is to solve this problem, by proposing a new decomposition of $\Nolt\lambda$ as a union of fields that are stable under both exponential and logarithm.

\section{A hierarchy of subfields of $\No$ stable by exponential and logarithm}
\label{sec:proofs}
This section is devoted to prove Theorems \ref{thm:SRFGammaUpStableExpLn}, \ref{thm:NolambdaDecompCorpsStables} and \ref{thm:hierarchieUparrow}. 

\subsection{Hierarchy of fields stable by exponential and logarithm}
We start by Theorem \ref{thm:SRFGammaUpStableExpLn} repeated here for readability:

\thmSRFGammaUpStableExpLn*

We will actually prove a stronger result which is the following proposition.

\begin{Proposition}
	\label{prop:UnionSRFStableExpLn}
	Let $\lambda$ be an $\epsilon$-number and $\suite{\Gamma_i}iI$ be a family of Abelian subgroups of $\Nobf$. Then
	$\RlGI$ is stable under $\exp$ and $\ln$ if and only if $$\Unionin iI\Gamma_i=\Unionin iI\SRF\lambda{g\pa{\pa{\Gamma_i}^*_+}}$$
\end{Proposition}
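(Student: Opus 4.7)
The plan is to match stability under $\exp$ and $\ln$ with an exponent-transfer condition, using the explicit formulas $\exp(x)=\omega^{\sum_j r_j\omega^{g(a_j)}}$ for purely infinite $x=\sum_j r_j\omega^{a_j}$ (Proposition \ref{prop:formeExpXPurelyInfiniteOmegaAg}) and $\ln\omega^a=\sum_j s_j\omega^{h(b_j)}$ when $a$ has normal form $\sum_j s_j\omega^{b_j}$ (Corollary following Theorem \ref{thm:lnOmegaOmegaA}), together with the fact that $g$ and $h$ are mutual inverses between $\Nobf^{>0}$ and $\Nobf$. The role of the proposed identity is precisely to say that the exponents which $\exp$ produces (namely $g$-images of input exponents) and the exponents that $\ln$ consumes (namely $h$-images of the leading exponent of $a$) can always be re-absorbed inside the family.

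For the forward direction, assume $\RlGI$ is stable under $\exp$ and $\ln$. Given $z=\sum_j r_j\omega^{g(a_j)}\in\SRF\lambda{g((\Gamma_i)^*_+)}$ with $a_j\in(\Gamma_i)^*_+$, the purely infinite surreal $x=\sum_j r_j\omega^{a_j}$ sits in $\SRF\lambda{\Gamma_i}\subseteq\RlGI$, so $\exp(x)=\omega^z\in\RlGI$ and thus $z\in\Gamma_k$ for some $k$. Conversely, given $y\in\Gamma_i$, the monomial $\omega^y$ lies in $\SRF\lambda{\Gamma_i}$, and stability under $\ln$ places $\ln\omega^y=\sum_j s_j\omega^{h(b_j)}$ (where $y=\sum_j s_j\omega^{b_j}$ is the normal form of $y$) in some $\SRF\lambda{\Gamma_k}$; each $h(b_j)\in(\Gamma_k)^*_+$, so $b_j=g(h(b_j))\in g((\Gamma_k)^*_+)$, and since the support length of $y$ equals that of $\ln\omega^y$, we get $y\in\SRF\lambda{g((\Gamma_k)^*_+)}$.

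For the backward direction, assume the equality. To show stability under $\exp$, take $x\in\SRF\lambda{\Gamma_i}$ and decompose $x=x_\infty+r+\epsilon$ with $x_\infty$ purely infinite, $r\in\Rbb$, $\epsilon$ infinitesimal; each piece remains in $\SRF\lambda{\Gamma_i}$. By Theorem \ref{thm:expAppreciables}, $\exp(x)=\exp(x_\infty)\exp(r)\exp(\epsilon)$. The Taylor expansion of $\exp(\epsilon)$ has support in the commutative monoid generated by $\supp(\epsilon)\subseteq\Gamma_i$, which stays in $\Gamma_i$ and has length $<\lambda$ by Proposition \ref{prop:orderTypeMonoid} (using that $\lambda$ is an $\epsilon$-number). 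For $\exp(x_\infty)=\omega^y$ with $y=\sum_j r_j\omega^{g(a_j)}$, one has $y\in\SRF\lambda{g((\Gamma_i)^*_+)}$, which by the assumed equality lies in some $\Gamma_{k'}$, so $\omega^y\in\SRF\lambda{\Gamma_{k'}}$. To show stability under $\ln$, write a positive $a\in\SRF\lambda{\Gamma_i}$ as $a=\omega^{a_0}(r_0+\tilde\epsilon)$ with $r_0>0$, so $\ln a=\ln\omega^{a_0}+\ln r_0+\ln(1+\tilde\epsilon/r_0)$; the last two terms are handled as above. For the first, $a_0\in\Gamma_i$ together with the hypothesis gives $a_0\in\SRF\lambda{g((\Gamma_{l'})^*_+)}$ for some $l'$, so its normal-form exponents have the shape $b_k=g(c_k)$ with $c_k\in(\Gamma_{l'})^*_+$; therefore $h(b_k)=c_k$ and $\ln\omega^{a_0}=\sum_k s_k\omega^{c_k}\in\SRF\lambda{\Gamma_{l'}}$.

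The main obstacle is book-keeping across the family: the various factors and summands produced in the backward direction live a priori in different $\SRF\lambda{\Gamma_k}$'s, and we must find a single $\Gamma_{k''}$ in the family containing all the relevant exponent sets so that the product or sum itself belongs to $\RlGI$. This requires the family $(\Gamma_i)_{i\in I}$ to be directed (any two $\Gamma_i,\Gamma_j$ are contained in some common $\Gamma_k$), which is in fact the very same requirement that makes $\RlGI$ a ring in the first place; in the intended application to $\Gamma^{\uparrow\lambda}$ (Definition \ref{def:uparrow}) this is automatic, since the family is an increasing chain indexed by the canonical sequence of $\lambda$.
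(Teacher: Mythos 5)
Your proof is correct and follows essentially the same route as the paper's: the forward direction via $\exp$ of purely infinite numbers and $\ln$ of monomials $\omega^a$ with $a\in\Gamma_i$, and the backward direction via the decomposition into purely infinite, real and infinitesimal parts, the Taylor series together with the monoid order-type bound, and the $g$/$h$ correspondence. The directedness caveat you raise is exactly the point the paper's proof passes over silently with the phrase ``since $\RlGI$ is a field'' (a union of the fields $\SRF\lambda{\Gamma_i}$ over a non-directed family need not be closed under sums and products), and, as you observe, it is harmless in the intended application where the $\Gamma_i$ form an increasing chain.
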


\begin{proof}
	
	\begin{itemize}
		\item \textbf{Proof of direct implication}: We assume that $\RlGI$ is stable under both exponential and logarithm. Then for any $x=\aSurreal$ a purely infinite number, we have 
		
		$$
			\exp x = \omega^{\Sumlt i\nu r_i\omega^{g(a_i)}}\in\RlGI
		$$
		and therefore 
		
		$$
			\Sumlt i\nu r_i\omega^{g(a_i)}\in\Unionin jI \Gamma_j
		$$
		This being true for any family $(a_i)_{i<\nu}$ of $\Gamma_j$, for any $j\in I$. Hence, $\Unionin iI \SRF\lambda{g\pa{\pa{\Gamma_i}^*_+}}\subseteq\Unionin iI\Gamma_i$.
		
		Conversely, for any $j\in I$ and any $a\in\Gamma_j$ then we have $\ln\omega^a\in\RlGI$. Writing $a=\aSurreal$, we get $\Sumlt i\nu r_i\omega^{h(a_i)}\in\RlGI$. To say it another way,
		$$\exists k\in I\quad \forall i<\nu\qquad h(a_i)\in\Gamma_k$$
		or
		$$\exists k\in I\quad \forall i<\nu\qquad a_i\in g\left(\left(\Gamma_k\right)^*_+\right)$$
		Then, there is some $k\in I$ such that $a\in\SRF\lambda{g\pa{\pa{\Gamma_k}^*_+}}$. Hence, for all $j\in I$, $\Gamma_j\subseteq \Unionin iI \SRF\lambda{g\pa{\pa{\Gamma_i}^*_+}}$. 
		Finally, $\Unionin iI \SRF\lambda{g\pa{\pa{\Gamma_i}^*_+}}\supseteq\Unionin iI\Gamma_i$.
		
		We have both inclusions, then, $\Unionin iI \SRF\lambda{g\pa{\pa{\Gamma_i}^*_+}} = \Unionin iI\Gamma_i$
		
		\item \textbf{Proof of indirect implication}: We assume that $\Unionin iI \SRF\lambda{g\pa{\pa{\Gamma_i}^*_+}} = \Unionin iI\Gamma_i$. We distinguish the proof in several steps
		
		\begin{enumerate}[label=(\roman*)]
			\item\label{it:casExpApp} 
			First take $x= \aSurreal\in\RlGI$ being appreciable, i.e. $a_i\leq 0$ for all $i<\nu$. By definition there is some $j\in I$ such that $x$ is an element of $\SRF\lambda{\Gamma_j}$. Since from Theorem \ref{thm:expAppreciables} for any appreciable number 
			$$\supp\exp x \subseteq \inner{\supp x} $$
			\noindent where $\inner{\supp x}$ is the monoid generated by $\supp x$ in $\Gamma_j$. In particular $\supp\exp x\subseteq\Gamma_j$. Then, Proposition \ref{prop:orderTypeMonoid} ensures that the order type of $\supp\exp x$ is less than $\lambda$. Hence $\exp x \in\RlGI$. 
			
			\item\label{it:casExpInf} 
			
			Let $x=\aSurreal\in\RlGI$ a purely infinite number. Let $j\in I$ such that $x\in\SRF\lambda{\Gamma_j}$ that is that $a_i\in\left(\Gamma_j\right)^*_+$ for all $i<\nu$.  We have $\exp x = \omega^{\Sumlt i\nu r_i\omega^{g(a_i)}}$ and 
			$$
			\Sumlt i\nu r_i\omega^{g(a_i)}\in \SRF\lambda{g\left(\left(\Gamma_j\right)^*_+\right)}
			$$
			
			By assumption, $\SRF\lambda{g\left(\left(\Gamma_j\right)^*_+\right)}\subseteq \Unionin iI\Gamma_i$. Then $\exp x\in \RlGI$.
			
			\item\label{it:casExpG} 
			 We now make use of both Items \ref{it:casExpApp} and \ref{it:casExpInf}. Let $x\in\RlGI$ be arbitrary. Let $x_\infty$ its purely infinite part and $x_a$ its appreciable part. Then $x=x_\infty+x_a$ and $\exp x = \exp(x_\infty)\exp(x_a)$. Using \ref{it:casExpInf} and \ref{it:casExpApp} respectively, we have $\exp x_\infty\in\RlGI$ and $\exp x_a\in\RlGI$. Then since $\RlGI$ is a field, $\exp x\in\RlGI$.
			
			\item\label{it:casLnInf} 
			Similarly to Point \ref{it:casExpApp}, if $x=\aSurreal\in\RlGI$ is infinitesimal, i.e. $a_i<0$ for all $i<\nu$, then $\ln(1+x)=\Sum{k=1}{\infty}\f{x^k}{k}\in\RlGI$
			
			\item\label{it:casLnOmega} 
			Let $a\in\Unionin iI\Gamma_i$. By assumption there is $j\in I$ such that $a\in\SRF\lambda{g\left(\left(\Gamma_j\right)^*_+\right)}$.  Hence, we can write $a=\Sumlt i\nu r_i\omega^{g(a_i)}$ where $\nu<\lambda$ and $a_i\in \left(\Gamma_j\right)^*_+$ for all $i<\nu$. Then, $\ln\omega^a=\aSurreal$. Hence $\ln\omega^a\in\SRF\lambda{\Gamma_j}\subseteq\RlGI$.
			
			\item\label{it:casLnG} 
			Let $x\in\left(\RlGI\right)^*_+$ be arbitrary and write it as $x=r\omega^a(1+\epsilon)$ where $\epsilon$ is infinitesimal, $r$ is a positive real number and $a$ a surreal number. Then, 
			$\ln x = \ln\omega^a+\ln r+\ln(1+\epsilon)$. Then since $\RlGI$ is a field, $\exp x\in\RlGI$. Using 
			\ref{it:casLnOmega} and \ref{it:casLnInf} respectively, we have $\ln \omega^a\in\RlGI$ and $\ln(1+\epsilon)\in\RlGI$. Then since $\RlGI$ is a field containing $\R$, $\ln x\in\RlGI$.
			
			\item Item \ref{it:casExpG} proves that $\RlGI$ is stable under exponential and Item \ref{it:casLnG} that $\RlGI$ is stable under logarithm. This is what was announced.
			
		\end{enumerate}
		
	\end{itemize}
\end{proof}

We are now ready to prove the theorem. We use the notations of Definition~\ref{def:uparrow}.

\begin{proof}[Proof of Theorem \ref{thm:SRFGammaUpStableExpLn}]
	We write $\Gamma^{\uparrow\lambda}=(\Gamma_\beta)_{\beta<\gamma_\lambda}$. Using Proposition \ref{prop:UnionSRFStableExpLn}, we just need to show
	$$\Unionlt \beta{\gamma_\lambda}\Gamma_\beta = \Unionlt \beta{\gamma_\lambda}\SRF\lambda{g\pa{\pa{\Gamma_\beta}^*_+}}$$
	\begin{itemize}
		\item[\CSsubset] Let $x\in\SRF\lambda{g\pa{\pa{\Gamma_\beta}^*_+}}$. Let $n<\gamma_\lambda$ minimal such that $\nu(x)<e_n$. Then $x\in\Gamma_{\max(n,\beta)}$.
		
		\item[\CNsubset] Let $x\in\Gamma_\beta$. Write $x=\aSurreal$. We also have $x=\Sumlt i\nu r_i\omega^{g(h(a_i))}$ and $h(a_i)\in\Gamma_{\beta+1}$.  Then $x\in\SRF\lambda{g\pa{\pa{\Gamma_{\beta+1}}^*_+}}$.
	\end{itemize}
\end{proof}

As a final note of this section we notice that a consequence of Proposition \ref{prop:UnionSRFStableExpLn} is also the following:

\begin{Corollary}
	Let $\lambda$ be an $\epsilon$-number and $\Gamma$ be an abelian subgroup of $\Nobf$. Then
	$\RlG$ is stable under $\exp$ and $\ln$ if and only if $\Gamma=\SRF\lambda{g\pa{\Gamma^*_+}}$.
\end{Corollary}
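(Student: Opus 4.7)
The plan is simply to specialize Proposition \ref{prop:UnionSRFStableExpLn} to the case where the indexing family is a singleton. Concretely, I would take $I = \{0\}$ and set $\Gamma_0 = \Gamma$. Under this specialization the object $\RlGI = \bigcup_{i \in I} \SRF{\lambda}{\Gamma_i}$ collapses to the single field $\SRF{\lambda}{\Gamma}$, and the equivalent condition furnished by the proposition, namely
\[
\bigcup_{i \in I} \Gamma_i \;=\; \bigcup_{i \in I} \SRF{\lambda}{g\!\left((\Gamma_i)^*_+\right)},
\]
collapses to
\[
\Gamma \;=\; \SRF{\lambda}{g(\Gamma^*_+)}.
\]
That yields exactly the stated biconditional.

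There is essentially no additional obstacle: once one has Proposition \ref{prop:UnionSRFStableExpLn}, the corollary is a direct reading. The only thing worth checking explicitly is that a singleton family is indeed allowed in the proposition (it is, since the statement holds for an arbitrary indexing family $I$), and that $\Gamma$ being an abelian subgroup is the hypothesis required on each $\Gamma_i$ in the proposition. So the whole proof amounts to pointing at the proposition and recording the collapse of the union indexed by a single element.

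If one wished to make the proof self-contained without invoking the proposition, one would simply re-run its two implications in the one-group setting: the forward direction uses that $\exp$ applied to a purely infinite element of $\SRF{\lambda}{\Gamma}$ lands in $\SRF{\lambda}{\Gamma}$ only if the exponent $\sum r_i \omega^{g(a_i)}$ belongs to $\Gamma$, yielding $\SRF{\lambda}{g(\Gamma^*_+)} \subseteq \Gamma$; and stability under $\ln$ combined with $\ln \omega^a = \sum r_i \omega^{h(a_i)}$ gives the reverse inclusion $\Gamma \subseteq \SRF{\lambda}{g(\Gamma^*_+)}$. The backward direction then reproduces items (i)--(vi) of the proposition's proof verbatim, with the single group $\Gamma$ in place of the family. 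Since both of these arguments already appear in the proof of Proposition \ref{prop:UnionSRFStableExpLn}, the cleanest presentation is the one-line specialization described above.
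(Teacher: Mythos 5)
Your proof is correct and is exactly the paper's route: the corollary is stated there as a direct consequence of Proposition \ref{prop:UnionSRFStableExpLn}, obtained by specializing the family to a single group $\Gamma$, which is precisely your one-line collapse of the unions.
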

This result is quite similar to Theorem \ref{thm:SRFGammaUpStableExpLn} but in the particular very particular case where $\Unionin G{\Gamma^{\uparrow\lambda}}G=\Gamma$. This apply for instance when $\Gamma=\{0\}$. In this case, we get $\RlG=\R$. If $\lambda$ is a regular cardinal we get an other example considering $\SRF\lambda\Gamma=\Gamma=\Nolt\lambda$.

\subsection{Proof of Theorem \ref{thm:NolambdaDecompCorpsStables}}

To prove the Theorem \ref{thm:NolambdaDecompCorpsStables}, we first prove a proposition to ensure inclusion of the fields in the union.

\begin{Proposition}
	\label{prop:stableExpLnContenuNoLambda}
	Let $\lambda$ be an $\epsilon$-number and $\mu<\lambda$ an additive (or multiplicative) ordinal. If $\Gamma\subseteq\Nolt\mu$ then $\RlGup\subseteq\Nolt\lambda$ 
\end{Proposition}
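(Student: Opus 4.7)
My plan is to prove, by transfinite induction on $\beta<\gamma_\lambda$, the strengthened claim that there exists an additive ordinal $\mu_\beta<\lambda$ such that $\Gamma_\beta\subseteq\Nolt{\mu_\beta}$. Granted this, monotonicity of the Hahn-series construction in the exponent set gives $\SRF\lambda{\Gamma_\beta}\subseteq\SRF\lambda{\Nolt{\mu_\beta}}$, and the latter is contained in $\Nolt\lambda$ by the decomposition in Theorem~\ref{th:Ehrlichquatresept}(1) (since $\mu_\beta$ is additive and below $\lambda$). Taking the union over $\beta<\gamma_\lambda$ then yields $\RlGup=\bigcup_{\beta<\gamma_\lambda}\SRF\lambda{\Gamma_\beta}\subseteq\Nolt\lambda$, as wanted.

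\textbf{Base and successor step.} For $\beta=0$ one simply takes $\mu_0=\mu$, which is additive by hypothesis. Assume $\Gamma_\beta\subseteq\Nolt{\mu_\beta}$ with $\mu_\beta<\lambda$ additive; I would bound each of the three generating families of $\Gamma_{\beta+1}$ uniformly. For $b\in(\Gamma_\beta)^*_+$, Lemma~\ref{lem:lengthGA} gives $\length{g(b)}\leq\length b+1\leq\mu_\beta$, so by Lemma~\ref{lem:lengthOmegaA} we have $\length{\omega^{g(b)}}\leq\omega^{\mu_\beta}$. Theorem~\ref{thm:serieToSignExp} then represents a generic $\sum_{i<\nu}r_i\omega^{g(b_i)}\in\SRF{e_\beta}{g((\Gamma_\beta)^*_+)}$ as the concatenation of $\nu<e_\beta$ sign expansions $\omega^{g(b_i)^\circ}r_i$, each of length at most $\omega^{\mu_\beta+2}$, for a total length bounded by $e_\beta\cdot\omega^{\mu_\beta+2}$. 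For $a=\aSurreal\in\Gamma_\beta$, Lemmas~\ref{lem:lengthOmegaA} and~\ref{lem:lengthTerm} chain to $\length{a_i}\leq\length{\omega^{a_i}}\leq\length{r_i\omega^{a_i}}\leq\length a<\mu_\beta$, and then Lemma~\ref{lem:lengthH} gives $\length{h(a_i)}\leq\omega^{\mu_\beta+1}$. Because $\lambda$ is an $\epsilon$-number it is closed both under $\omega^{(\cdot)}$ and under ordinary multiplication of arguments $<\lambda$, so one may pick $\mu_{\beta+1}$ an additive ordinal with $e_\beta\cdot\omega^{\mu_\beta+2}\leq\mu_{\beta+1}<\lambda$. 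By Theorem~\ref{thm:structureNolambda}, $\Nolt{\mu_{\beta+1}}$ is an additive subgroup of $\No$, so every finite $\Zbb$-linear combination of the generators, hence all of $\Gamma_{\beta+1}$, lies in $\Nolt{\mu_{\beta+1}}$.

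\textbf{Limit step and main obstacle.} If $\lambda=\epsilon_0$ or $\lambda=\epsilon_{\alpha+1}$ then $\gamma_\lambda=\omega$ and no limit index appears. The remaining case is $\lambda=\epsilon_\alpha$ with $\alpha$ a nonzero limit ordinal; then $\gamma_\lambda=\alpha$ and $e_\beta=\epsilon_\beta$. Here I would tighten the successor recursion by simply setting $\mu_{\beta+1}=\epsilon_{\beta+1}$, which still dominates $e_\beta\cdot\omega^{\mu_\beta+2}=\epsilon_\beta\cdot\omega^{\mu_\beta+2}$ thanks to the multiplicative and $\omega^{(\cdot)}$-closure of $\epsilon_{\beta+1}$ (provided $\mu_\beta<\epsilon_{\beta+1}$, which is maintained inductively after an initial shift to absorb $\mu_0=\mu$). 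At a limit $\beta<\alpha$ one then sets $\mu_\beta=\sup_{\gamma<\beta}\mu_\gamma\leq\epsilon_\beta<\lambda$, which is additive as a supremum of $\epsilon$-numbers, and clearly $\Gamma_\beta=\bigcup_{\gamma<\beta}\Gamma_\gamma\subseteq\Nolt{\mu_\beta}$. The main obstacle I expect is precisely this calibration at limit stages: the sequence $(\mu_\beta)$ must be designed to grow no faster than the canonical $\epsilon$-scale $(e_\beta)_{\beta<\gamma_\lambda}$ defining $\lambda$, while still absorbing at every successor step the three generating families via the length bounds on $g$, $h$, $\omega^{(\cdot)}$ and Hahn concatenation; the whole argument relies in an essential way on $\lambda$ being an $\epsilon$-number, so that $\omega^{(\cdot)}$ and ordinary ordinal multiplication are bounded by $\lambda$ on ordinals below $\lambda$.
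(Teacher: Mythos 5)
Your proposal is correct and follows essentially the same route as the paper's proof: a transfinite induction along the canonical sequence of $\lambda$, showing $\Gamma_\beta\subseteq\Nolt{\mu_\beta}$ for some additive $\mu_\beta<\lambda$ via Lemmas \ref{lem:lengthGA}, \ref{lem:lengthOmegaA}, \ref{lem:lengthH} and \ref{lem:lengthTerm} at successor steps and closure of $\Nolt{\mu_\beta}$ under addition, the paper choosing $\mu_\beta=e_{k\oplus 2\otimes\beta}$ for a fixed offset $k$ with $\mu\leq e_k$. One word of caution on the point you yourself flag: realize the ``initial shift'' with a bound of the form $e_{\max(k,\beta)+1}$ rather than an additive offset like $\epsilon_{k+\beta}$, since when $\gamma_\lambda$ is a limit ordinal that is not additively closed the latter can make $\sup_{\gamma<\beta}\mu_\gamma$ reach $\lambda$ at some limit $\beta<\gamma_\lambda$, whereas the max-type choice keeps every $\mu_\beta$ strictly below $\lambda$, which is needed because $\SRF\lambda{\Nolt\lambda}\not\subseteq\Nolt\lambda$ in general.
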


\begin{proof}
	Write $\Gamma^{\uparrow\lambda}=\suitelt{\Gamma_\beta}\beta{\gamma_\lambda}$. What we have to prove is that for all $i<\gamma_\lambda$, $\Gamma_i\subseteq\Nolt{\mu_i}$ for some $\mu_i<\lambda$. We will even prove that $\mu_i=e_{k\oplus 2\otimes i}$ works for some fixed ordinal $k$. We prove it by induction on $i$.
	
	\begin{itemize}
		\item For $i=0$, $\mu_0=e_k$ with $k$ the least ordinal such that $\mu\leq e_k$ works. 
		
		\item Assume $i=j+1$ and that the property is true for $j$. Therefore $\Gamma_{i}$ is the group generated by $\Gamma_j$, $\SRF{e_j}{g\pa{(\Gamma_j)^*_+}}$ and $\enstq{h(a_k)}{\Sumlt k\nu r_k\omega^{a_k}\in\Gamma_j}$. Thanks to the induction hypothesis and Lemma \ref{lem:lengthGA}, $g\pa{(\Gamma_j)^*_+}\subseteq \Nolt{\mu_j}$, since $\mu_j$ is an additive ordinal. Hence, thanks to Lemma \ref{lem:lengthOmegaA}, $\SRF{e_j}{g\pa{(\Gamma_j)^*_+}}\subseteq \Nolt{\omega^{\mu_j\otimes\omega}\otimes e_j}$. Finally, from Lemmas \ref{lem:lengthH} and \ref{lem:lengthOmegaA}, $h(a_k)\in\Nolt{\omega^{\mu_j}}$. Thus, $\Gamma_i\subseteq \Nolt{\omega^{\mu_j\otimes\omega}\otimes e_j}$. Since  $\omega^{\mu_j\otimes\omega},e_j<e_{k\oplus2\otimes i}$, and $e_{k\oplus2\otimes i}$ is multiplicative,  taking, $\mu_i=e_{k\oplus 2\otimes i}$ works.
		
		\item If $i<\gamma_\lambda$ is a limit ordinal, for all $j<i$, $\lambda > e_{k\oplus2\otimes i}>e_{k\oplus 2\otimes j}$. Then, by the induction hypothesis on all $j<i$, $\Gamma_i\subseteq\Nolt{e_{k\oplus2\otimes i}}$.
	\end{itemize}
\end{proof}

With the previous proposition, we have all what we need to prove Theorem \ref{thm:NolambdaDecompCorpsStables}, that we repeat here for readabiligy:

\thmNolambdaDecompCorpsStables*

\begin{proof}[Proof of Theorem \ref{thm:NolambdaDecompCorpsStables}]
	Using Theorem \ref{th:Ehrlichquatresept}, we know that 
	$$\Nolambda=\Unionin\mu{\enstq{\mu<\lambda}{\mu\text{ additive ordinal}}}\SRF\lambda{\Nolt\mu}$$ 
	By definition of  $\SRF\lambda{{\Nolt\mu}^{\uparrow\lambda}}$, it must contain $\SRF\lambda{\Nolt\mu}$ and then
	$$\Nolambda\subseteq \Unionin\mu{\enstq{\mu<\lambda}{\mu\text{ additive ordinal}}}\SRF\lambda{{\Nolt\mu}^{\uparrow\lambda}}$$
	On the other hand, applying Proposition \ref{prop:stableExpLnContenuNoLambda} gives
	$$\Unionin\mu{\enstq{\mu<\lambda}{\mu\text{ additive ordinal}}}\SRF\lambda{{\Nolt\mu}^{\uparrow\lambda}} \subseteq \Nolt\lambda$$
	and this concludes the proof.
\end{proof}

\subsection{Strictness of the Hierarchy} 

The hierarchy is strict (the theorem is reformulated here to help readability):

\thmhierarchieUparrow*

To prove the theorem we need to use the concept of log-atomic number that were introduced by Berarducci and Mantova in \cite{berarducci2018surreal}. 

\begin{Definition}[Log-atomic number, {\cite[Definition 5.1]{berarducci2018surreal}}]
	\label{def:logAtomique}
	A surreal number $x$ is said \textbf{log-atomic} if and only if for all natural number $n\in\Nbb$ there is some surreal number $a_n$ such that $\ln_n x = \omega^{a_n}$. The class of log-atomic number is denoted $\Lbb$.
\end{Definition}
A first example of such a number is $\omega$. Indeed, for any natural number $n$, it is possible to prove that $\ln_n\omega = \omega^{\frac1{\omega^n}}$. More generally, from Corollary \ref{cor:hmord} we get that iif $\mu=\omega^{\omega^{-\alpha}}$ for some ordinal $\alpha$, then $\ln_n\mu=\omega^{\omega^{-\alpha-n}}$ and $\mu$ is also a log-atomic number. 

\begin{Remark}
	If $x\in\Lbb$ then for all natural number $n\in\Nbb$, we have $\exp_nx\in\Lbb$ and $\ln_nx\in\Lbb$.
\end{Remark}

We expose the definition of \textbf{path} adapted to surreal numbers given by Berarducci and Mantova in \cite{berarducci2018surreal}. It was originally introduced by Schmeling in \cite{schmeling2001corps} for transseries.

\begin{Definition}[Path {\cite{schmeling2001corps}, \cite{berarducci2018surreal}}]
	A \textbf{path} is a function $P$ from $\Nbb$ to terms such that: 
	\begin{itemize}
		\item For $i\in\Nbb$, $P(i)$ is a term, i.e. there are some non-zero real number $r_i$ and some surreal number $a_i$ such that $P(i)=r_i\omega^{a_i}$.
		\item For $i\in\Nbb$, $P(i+1)$ is a term of $\ln\omega^{a_i}$ where $a_i$ is the unique surreal number such that $\omega^{a_i}\asymp P(i)$.
	\end{itemize}
	If $P$ is a path and $x=\aSurreal$ we say that \textbf{$P$ is a path of $x$} if $P(0)=r_i\omega^{a_i}$ for some $i<\nu$. We denote $\Pcal(x)$ the set of path of $x$.
\end{Definition}
Notice that if for some $i\in\Nbb$ $P(i)$ is a log-atomic number, then for all natural number $n\in\Nbb$, $P(i+n)=\ln_n P(i)$ is forced. In particular there is no more choice possible for $P$.

We now state that the construction of $\Gamma^{\uparrow\lambda}$ does not create new log-atomic numbers (up to some iteration of $\exp$ or $\ln$).  After that, we will prove that we do introduce new log-atomic numbers when going trough the hierarchy.

\begin{Lemma}
	\label{lem:LogAtomicDesGammai}
	Write $\Gamma^{\uparrow\lambda}=\suitelt{\Gamma_\beta}\beta{\gamma_\lambda}$, and let 
	$$L=\enstq{\exp_n x, \ln_nx}{x\in\Lbb,\quad n\in\Nbb,\quad \exists y\in\RlG\ \exists P\in\Pcal(y)\ \exists k\in\Nbb\quad P(k)=x}$$
	we have for all $i<\gamma_\lambda$,
	$$L=\enstq{\exp_n x, \ln_nx}{x\in\Lbb,\quad n\in\Nbb,\quad \exists y\in\SRF\lambda{\Gamma_i}\ \exists P\in\Pcal(y)\ \exists k\in\Nbb\quad P(k)=x}$$
\end{Lemma}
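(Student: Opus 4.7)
The plan is to prove the equality $L_i=L_0$ by transfinite induction on $i<\gamma_\lambda$, where $L_i$ denotes the set appearing on the right-hand side of the lemma with $\Gamma_i$ in place of $\Gamma^{\uparrow\lambda}$. The stated conclusion $L=L_i$ then follows, since $L=\bigcup_{i<\gamma_\lambda}L_i$ by the identity $\SRF\lambda{\Gamma^{\uparrow\lambda}}=\Unionlt i{\gamma_\lambda}\SRF\lambda{\Gamma_i}$. A useful simplification is that a path $P$ of any $y\in\SRF\lambda{\Gamma'}$ is entirely determined from index $1$ on by the exponent $c_0$ of its starting term $P(0)=r_0\omega^{c_0}$, with $c_0\in\Gamma'$; for $k\geq 1$ the coefficient $r_0$ plays no role, while for $k=0$, $r_0=1$ is forced. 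So $L_i$ coincides with the set of $\exp_n x,\ln_n x$ such that $x$ lies on a path of some monomial $\omega^c$ with $c\in\Gamma_i$, and it is enough to vary $c$ over $\Gamma_i$.

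The base case is tautological. For a limit $i<\gamma_\lambda$, any $c\in\Gamma_i=\bigcup_{j<i}\Gamma_j$ lies in some $\Gamma_j$, so the induction hypothesis immediately gives $L_i\subseteq L_0$. The essential step is the successor case: assuming $L_\beta=L_0$, I need $L_{\beta+1}\subseteq L_\beta$. Given $c\in\Gamma_{\beta+1}$ and a path $P$ of $\omega^c$ with $P(k)=x\in\Lbb$, I aim to exhibit $c'\in\Gamma_\beta$ and a path of $\omega^{c'}$ reaching $\exp_n x$ or $\ln_n x$ for some $n$. Decompose $c=\alpha+\rho+\tau$ with $\alpha\in\Gamma_\beta$, $\rho\in\SRF{e_\beta}{g((\Gamma_\beta)^*_+)}$, and $\tau$ a $\Zbb$-linear combination of elements $h(b)$ for $b\in\supp(\gamma)$, $\gamma\in\Gamma_\beta$. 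The pure cases are handled directly: if $c=\alpha$, take $c'=c$; if $c=\rho$, Proposition~\ref{prop:formeExpXPurelyInfiniteOmegaAg} yields $\omega^\rho=\exp\delta$ with $\delta=\sum_k s_k\omega^{a_k}\in\SRF\lambda{\Gamma_\beta}$, so the path of $\delta$ starting at $P(1)$ coincides with $P$ shifted by $-1$; if $c=h(b)$, Theorem~\ref{thm:lnOmegaOmegaA} identifies $\omega^{h(b)}$ as a term of $\ln\omega^\gamma\in\SRF\lambda{\Gamma_\beta}$, so the path of $\omega^\gamma$ starting at $\omega^\gamma$ reaches $P(1)$ at index $2$ and then coincides with $P$ shifted by $+1$.

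The main obstacle is the genuine mixed case, in which $c$ really involves summands from all three classes simultaneously. I would handle it by examining the normal form $c=\sum_l s_l\omega^{d_l}$ obtained from the normal forms of $\alpha,\rho,\tau$ via Theorem~\ref{thm:normalFormOp}: the term $P(1)=s_{l_0}\omega^{h(d_{l_0})}$ of $\ln\omega^c$ picks out one specific exponent $d_{l_0}$. By uniqueness of the normal form (Theorem~\ref{thm:normalForm}), $d_{l_0}$ comes from exactly one of $\alpha,\rho,\tau$ (up to merging of equal monomials whose coefficients sum in $\Rbb$), so one can substitute $c$ by a single-term surrogate $c'\in\Gamma_\beta$ coming only from that summand and apply the corresponding pure-case identification. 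Coefficients along the resulting path $P'$ automatically match those of $P$ from index~$2$ onward, since they are read off the normal forms of successive exponents. A few edge cases (such as $k=0$, which requires switching the role of $x$ with $\ln x$ via closure of $L_\beta$ under $\exp_n,\ln_n$) are absorbed similarly. This closes the induction and establishes $L=L_i$ for every $i<\gamma_\lambda$.
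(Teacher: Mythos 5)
Your proposal is correct and follows essentially the same route as the paper's proof: transfinite induction on $i$ with trivial base and limit steps, and a successor step that decomposes the exponent of $P(0)$ into its $\Gamma_\beta$-part, its $\SRF{e_\beta}{g((\Gamma_\beta)^*_+)}$-part and the $h$-terms, traces the monomial of $P(1)$ to a single summand via formal addition/uniqueness of normal forms, and re-roots the path at $\omega^\alpha$, at the term itself (the $\rho$-case), or at $\omega^\gamma$ (the $h(b)$-case), with index shifts absorbed by the closure of $L$ under $\exp_n$ and $\ln_n$ — exactly the three cases of the paper. The only loose point is your claim that a path is ``entirely determined'' from index $1$ on by the exponent of its first term (a path still involves a choice of term at each step); what you actually use — that paths and their constraints depend only on that exponent, so one may reduce to paths of monomials $\omega^c$ with $c\in\Gamma_i$ — is sound.
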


\begin{proof} We prove it by induction on $i$.
	\begin{itemize}
		\item For $i=0$, $\Gamma_0=\Gamma$ then there is noting to prove.
		
		\item Assume the property for some ordinal $i<\gamma_\lambda$.  We prove it for $i+1$.
		\begin{itemize}
			\item[\CNsubset] Trivial since $\SRF\lambda{\Gamma_i}\subseteq\SRF\lambda{\Gamma_{i+1}}$.
			\item[\CSsubset] Let $x\in\Lbb$, $y\in\SRF\lambda{\Gamma_{i+1}}$, $P\in\Pcal(y)$ and $k\in\Nbb$ such that $P(k)=x$. Write $P(0)=r\omega^a$ a term of $x$ with $a\in\Gamma_{i+1}$. Then $a$ can be written
			$$a = u+v+\Sum{j=1}k \sigma_jh(w_j)$$
			with $u\in\Gamma_i$, $v\in \SRF{e_i}{g\pa{(\Gamma_i)_+^*}}$, $\sigma_j\in\{-1,1\}$ and $w_j\in \Gamma_i$.
			By definition of a path, $P(1)$ is a purely infinite term of 
			$$\ln\omega^a=\ln\omega^u+\ln\omega^v+\Sum{j=1}k\ln\omega^{\sigma_jh(w_j)} = \ln\omega^u+\ln\omega^v+\Sum{j=1}k\sigma_j\ln\omega^{h(w_j)}$$
			Then, up to a real factor $s$, $P(1)$ is a term of either $\ln\omega^u$ or $\ln\omega^v$ or $\ln\omega^{h(w_j)}$ for some $j$.
			
			\begin{itemize}
				\item \underline{Case 1}: $sP(1)$ is a purely infinite term of $\ln\omega^{u}$. Then the function
				$$Q(m)=\begin{cases}
						\omega^u & \text{if }m=0 \\ sP(1) & \text{if }m=1 \\ P(m) & \text{if }m>1
					\end{cases}$$
				is a path of $\omega^u\in\SRF\lambda{\Gamma_i}$. Then, if $m\geq\max(2,n)$, $Q(m) = P(m)=\ln_{m-n}(x)$ then 
				for all $n\in\Nbb$, 
				$$\exp_nx,\ln_nx\in\enstq{\exp_n x, \ln_nx}{\begin{array}{c}
						x\in\Lbb,\quad n\in\Nbb,\\
						\exists y\in\SRF\lambda{\Gamma_{i+1}}\ \exists P\in\Pcal(y)\ \exists k\in\Nbb\quad P(k)=x
					\end{array}}$$
				
				\item \underline{Case 2}: $sP(1)$ is a purely infinite term of $\ln\omega^v$. Write $v=\Sumlt i{\nu'}s_i\omega^{g(b_i)}$ where $b_i\in \Gamma_k$. Again, the function
				$$Q(m)=\begin{cases}
						sP(1) & \text{if }m=0 \\ P(m+1) & \text{if }m>0
					\end{cases}$$
				is a path of $\ln\omega^v=\Sumlt{i}{\nu'}s_i\omega^{b_i}\in\SRF\lambda{\Gamma_i}$. Then,if $m\geq\max(1,n-1)$, $Q(m)=\ln_{m-n+1}x\in L$ and we are done.
				
				\item \underline{Case 3:} $sP(1)$ is a purely infinite term of $\ln\omega^{h(w_j)}$. From the definition of $w_j$, there is $s'\in\Rbb^*$ such that $s'\omega^{w_j}$ is a term of some element of $y\in\Gamma_n$. Then $s'\omega^{h(w_i)}$ is a purely infinite term of $\ln\omega^y$. Then the function
				$$Q(m)=\begin{cases}
						\omega^y & \text{if }m=0 \\ s'\omega^{h(w_i)} & \text{if }m=1 \\ sP(1) & \text{if }m=2 \\ P(m-1) & \text{if }m>2
					\end{cases}$$
				is a path of $\omega^y\in\SRF\lambda{\Gamma_i}$.  Then,  if $m\geq\max(3,n+1)$, $Q(m)=\ln_{m-n-1}x\in L$ and we are done.
			\end{itemize}
		\end{itemize}
		
		\item Let $i<\gamma_\lambda$ be a limit ordinal. Assume the property for $j<i$. We have that $\Gamma_i=\Unionlt ji\Gamma_j$. Again we just need to prove one inclusion, the other one being trivial. Let $x\in\Lbb$ and $y\in\SRF\lambda{\Gamma_i}$, $P\in\Pcal(y)$ and $n\in\Nbb$ minimal such that $P(n)=x$. Write $P(0)=r\omega^a$ with $a\in\Gamma_i$. Then there is $j<i$ such that $a\in\Gamma_j$. In particular $P$ is a path of $r\omega^a\in\SRF\lambda{\Gamma_j}$. We conclude using induction hypothesis on $j$.
		
	\end{itemize}
\end{proof}

\begin{Corollary}
	\label{cor:LogAtomicDeGammaUp}
	Let $\Gamma$ be an abelian additive subgroup of $\Nobf$ and
	$$L=\enstq{\exp_n x, \ln_nx}{x\in\Lbb,\quad n\in\Nbb,\quad \exists y\in\RlG\ \exists P\in\Pcal(y)\ \exists k\in\Nbb\quad P(k)=x}$$
	Then,
	$$L = \enstq{\exp_n x, \ln_nx}{\begin{array}{c}
				x\in\Lbb,\quad n\in\Nbb,\\
				\exists y\in\RlGup\ \exists P\in\Pcal(y)\ \exists k\in\Nbb\quad P(k)=x
			\end{array}}$$
\end{Corollary}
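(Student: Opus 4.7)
The plan is to derive the corollary as an immediate consequence of Lemma \ref{lem:LogAtomicDesGammai} by unfolding the definition $\SRF\lambda{\Gamma^{\uparrow\lambda}}=\bigcup_{i<\gamma_\lambda}\SRF\lambda{\Gamma_i}$. Let me denote by $L'$ the right-hand side of the displayed equality (the set built from $\SRF\lambda{\Gamma^{\uparrow\lambda}}$), and for each $i<\gamma_\lambda$ let $L_i$ be the set obtained by replacing $\RlG$ with $\SRF\lambda{\Gamma_i}$ in the definition of $L$. Lemma \ref{lem:LogAtomicDesGammai} asserts that $L=L_i$ for every $i<\gamma_\lambda$, so the whole task reduces to showing $L'=\bigcup_{i<\gamma_\lambda}L_i$.

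For the inclusion $L\subseteq L'$, I would simply observe that $\Gamma_0=\Gamma$, so any witnessing pair $(y,P)$ with $y\in\RlG$ satisfies $y\in\SRF\lambda{\Gamma_0}\subseteq\SRF\lambda{\Gamma^{\uparrow\lambda}}$; hence membership in $L$ directly gives membership in $L'$. For the converse, take $z\in L'$, write $z=\exp_n x$ or $z=\ln_n x$ for some $x\in\Lbb$ and fix a witness $y\in\SRF\lambda{\Gamma^{\uparrow\lambda}}$ together with a path $P\in\Pcal(y)$ and $k\in\Nbb$ with $P(k)=x$. By definition of $\SRF\lambda{\Gamma^{\uparrow\lambda}}=\bigcup_{i<\gamma_\lambda}\SRF\lambda{\Gamma_i}$ there exists some $i<\gamma_\lambda$ with $y\in\SRF\lambda{\Gamma_i}$; hence $z\in L_i$. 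Applying Lemma \ref{lem:LogAtomicDesGammai} to this particular $i$ gives $L_i=L$, so $z\in L$.

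Both inclusions together yield $L=L'$. There is essentially no obstacle here: all the technical work is concentrated in Lemma \ref{lem:LogAtomicDesGammai} (the case analysis on which summand of $a\in\Gamma_{i+1}$ the term $P(1)$ descends from), and the corollary is a purely set-theoretic rephrasing obtained by passing to the union over $i<\gamma_\lambda$.
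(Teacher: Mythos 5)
Your proposal is correct and is exactly the argument the paper intends: the paper's proof is literally ``apply the definition of $\SRF\lambda{\Gamma^{\uparrow\lambda}}$ and Lemma \ref{lem:LogAtomicDesGammai}'', and you have simply spelled out the resulting union over $i<\gamma_\lambda$ and the two inclusions. No gap and no difference in approach.
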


\begin{proof}
	Just apply the definition of $\RlGup$ and Lemma \ref{lem:LogAtomicDesGammai}.
	
\end{proof}

We now prove the theorem.

\begin{proof}[Proof of Theorem \ref{thm:hierarchieUparrow}]
	Let $\lambda$ be an epsilon number. Let $\mu<\mu'<\lambda$ be multiplicative ordinals. Let $x=\omega^{\omega^{-\mu}}$. Clearly, $x\in\SRF\lambda{\Nolt{\mu'}}\subseteq\SRF\lambda{{\Nolt{\mu'}}^{\uparrow\lambda}}$. So we will prove that $x\notin\SRF\lambda{{\Nolt\mu}^{\uparrow\lambda}}$.
	Note $x$ is a log-atomic number, indeed using Corollary \ref{cor:hmord} $\ln_n x=\omega^{\omega^{-\mu-n}}$. Then applying Corollary \ref{cor:LogAtomicDeGammaUp} to both $\Nolt\mu$ and $\Nolt{\mu'}$ we just need to show that 
	$$x\notin\enstq{\exp_n x, \ln_nx}{x\in\Lbb,\quad n\in\Nbb,\quad \exists y\in\SRF\lambda{\Nolt\mu}\ \exists P\in\Pcal(y)\ \exists k\in\Nbb\quad P(k)=x}$$
	Assume the converse. Then there is some path $P$ such that $P(0)\in\Rbb\omega^{\Nolt\mu}$ and there is some natural numbers $n,k\in\Nbb$ such that $P(k)=\ln_n x$.
	We prove by induction on $i$ that for all $i\in\intn0k$, $\length{a_i}\geq\mu$ where $P(i)=r_i\omega^{a_i}$, 
	\begin{itemize}
		\item For $i=k$, $P(i)=\omega^{\omega^{-\mu-n}}$ and using theorem \ref{thm:serieToSignExp}, $$\length{\omega^{-\mu-n}}=\omega\otimes(\mu+n)\geq\mu$$
		
		\item Assume the property for some $i\in\intn1k$. By definition of a path, writing $P(i-1)=r_{i-1}\omega^{a_{i-1}}$ and $a_{i-1}=\Sumlt j\nu s_j\omega^{b_j}$, there is some $j_0<\nu$ such that $b_{j_0}=g(a_i)$ and $s_{j_0}=r_i$.
		Using induction hypothesis and Corollary \ref{cor:lengthOmegaGA}, $\length{\omega^{b_{j_0}}}\geq\mu$ and therefore $\length{s_{j_0}\omega^{b_{j_0}}}\geq\mu$. Now using Lemma \ref{lem:lengthTerm}, $\length{a_{i-1}}\geq\length{s_{j_0}\omega^{b_{j_0}}}\geq\mu$.
	\end{itemize}
	The induction principle conclude that $\length{a_0}\geq\mu$. But since $P(0)\in\Rbb\omega^{\Nolt\mu}$, $\length{a_0}<\mu$. We reach a contradiction. Then $x\notin\SRF\lambda{\Nolt\mu}$.
\end{proof}

\bibliographystyle{plain}
\bibliography{bournez,perso,biblio}

\begin{thebibliography}{10}

\bibitem{alling1987foundations}
Norman~L Alling.
\newblock {\em Foundations of analysis over surreal number fields}, volume 141.
\newblock Elsevier, 1987.

\bibitem{aschenbrenner:hal-02350421}
Matthias Aschenbrenner, Lou Van Den~Dries, and Joris Van Der~Hoeven.
\newblock {The surreal numbers as a universal $H$-field}.
\newblock {\em {Journal of the European Mathematical Society}},
  21(4):1179--1199, 2019.

\bibitem{barwise1975admissible}
Jon Barwise.
\newblock {\em Admissible Sets and Structures}.
\newblock Perspectives in mathematical logic. Cambridge University Press, 2016.

\bibitem{berarducci2018surreal}
Alessandro Berarducci and Vincenzo Mantova.
\newblock Surreal numbers, derivations and transseries.
\newblock {\em Journal of the European Mathematical Society}, 20(2):339--390,
  2018.

\bibitem{JournalACM2017}
Olivier {Bournez}, Daniel~S. {Gra{\c c}a}, and Amaury {Pouly}.
\newblock {Polynomial Time corresponds to Solutions of Polynomial Ordinary
  Differential Equations of Polynomial Length}.
\newblock {\em Journal of the ACM}, 64(6):38:1--38:76, 2017.

\bibitem{conway2000numbers}
John~H Conway.
\newblock {\em On numbers and games}.
\newblock AK Peters/CRC Press, 2000.

\bibitem{ehrlich2001number}
Philip Ehrlich.
\newblock Number systems with simplicity hierarchies: A generalization of
  conway's theory of surreal numbers.
\newblock {\em The Journal of Symbolic Logic}, 66(3):1231--1258, 2001.

\bibitem{ehrlich2012absolute}
Philip Ehrlich.
\newblock The absolute arithmetic continuum and the unification of all numbers
  great and small.
\newblock {\em Bulletin of Symbolic Logic}, 18(1):1--45, 2012.

\bibitem{galeotti2016candidate}
Lorenzo Galeotti.
\newblock A candidate for the generalised real line.
\newblock In {\em Conference on Computability in Europe}, pages 271--281.
  Springer, 2016.

\bibitem{gonshor1986introduction}
Harry Gonshor.
\newblock {\em An Introduction to the Theory of Surreal Numbers}.
\newblock London Mathematical Society. Cambridge University Press, 1986.

\bibitem{hahn1995nichtarchimedischen}
Hans Hahn.
\newblock {\"U}ber die nichtarchimedischen gr{\"o}{\ss}ensysteme.
\newblock In {\em Hans Hahn Gesammelte Abhandlungen Band 1/Hans Hahn Collected
  Works Volume 1}, pages 445--499. Springer, 1995.

\bibitem{knuth1974surreal}
Donald~E Knuth.
\newblock {\em Surreal Numbers: How Two Ex-students Turned on to Pure
  Mathematics and Found Total Happiness a Mathematical Novelette. Knuth}.
\newblock Addison-Wesley, 1974.

\bibitem{maclane1939}
Saunders MacLane.
\newblock The universality of formal power series fields.
\newblock {\em Bull. Amer. Math. Soc.}, 45:888--890, 12 1939.

\bibitem{mantova2017surreal}
Vincenzo Mantova and Micka{\"e}l Matusinski.
\newblock Surreal numbers with derivation, hardy fields and transseries: a
  survey.
\newblock In {\em Contemporary Mathematics}, volume 697, pages 265--290.
  American Mathematical Society, 2017.

\bibitem{ressayre1993integer}
JP~Ressayre.
\newblock Integer parts of real closed exponential fields, arithmetic, proof
  theory and computational complexity (p. clote and j. krajicek, eds.).
\newblock 1993.

\bibitem{schmeling2001corps}
Michael~Ch Schmeling.
\newblock {\em Corps de transs{\'e}ries}.
\newblock PhD thesis, Paris 7, 2001.

\bibitem{van2001erratum}
Lou van~den Dries and Philip Ehrlich.
\newblock Erratum to:``fields of surreal numbers and exponentiation''.
\newblock {\em Fund. Math}, 168(3):295--297, 2001.

\bibitem{DriesEhrlich01}
Lou {van den Dries} and Philip Ehrlich.
\newblock Fields of surreal numbers and exponentiation.
\newblock {\em Fundamenta Mathematicae}, 167(2):173--188, 1 2001.

\bibitem{van1994elementary}
Lou van~den Dries, Angus Macintyre, and David Marker.
\newblock The elementary theory of restricted analytic fields with
  exponentiation.
\newblock {\em Annals of Mathematics}, 140(1):183--205, 1994.

\bibitem{weiermannMaximalOrderType}
Andreas Weiermann.
\newblock A computation of the maximal order type of the term ordering on
  finite multisets.
\newblock pages 488--498, 07 2009.

\end{thebibliography}

\end{document}